\documentclass[11pt,reqno,a4paper]{amsart}

\usepackage[utf8]{inputenc}

\usepackage{amsthm,amsmath,amsfonts,amssymb}
\numberwithin{equation}{section}  
\usepackage{mathrsfs}
\usepackage{todonotes}

\usepackage{tikz-cd}

\usepackage{mathtools}	 
\usepackage{bm}	  
\usepackage{bbm}	 

\usepackage{tensor}	 
\usepackage[colorlinks=true]{hyperref} 
\usepackage[all]{hypcap}       
 
\newcommand{\C}{\mathbb{C}}

\newcommand{\R}{\mathbb{R}}

\newcommand{\T}{\mathbb{T}}
\newcommand{\bB}{\mathbf{B}}
\newcommand{\bL}{\mathbf{L}}
\newcommand{\bT}{\mathbf{T}}

\newcommand{\Z}{\mathbb{Z}}
\newcommand{\ii}{\operatorname{i}}
\newcommand{\eps}{\varepsilon}

\newcommand{\opZ}{\operatorname{Z}}
\newcommand{\ch}{\operatorname{ch}}

\newcommand{\vol}{\operatorname{vol}}

\newcommand{\Rea}{\operatorname{Re}}
\newcommand{\Imm}{\operatorname{Im}}

\newcommand{\del}{\partial}
\newcommand{\delbar}{\bar{\partial}}
\newcommand{\PP}{\mathbb{P}}
\newcommand{\LL}{\mathbb{L}}

\newcommand{\cA}{\mathcal{A}}

\newcommand{\cX}{\mathcal{X}}
\newcommand{\chX}{\check{X}}
\newcommand{\chZ}{\check{Z}}

\newcommand{\chcalZ}{\check{\mathcal{Z}}}
\newcommand{\chC}{\check{C}}
\newcommand{\cY}{\mathcal{Y}}
\newcommand{\cL}{\mathcal{L}}
\newcommand{\chf}{\check{f}}
\newcommand{\chs}{\check{\sigma}}

\newcommand{\chOm}{\check{\Omega}}
\newcommand{\chom}{\check{\omega}}
\newcommand{\chB}{\check{\mathbf{B}}}

\newcommand{\cE}{\mathcal{E}}
\newcommand{\cF}{\mathcal{F}}

\newcommand{\cP}{\mathcal{P}}

\newcommand{\cZ}{\mathcal{Z}}

\newcommand{\olo}{\mathcal{O}}
\newcommand{\cI}{\mathcal{I}}

\newcommand{\PD}{\operatorname{PD}}

\newcommand{\Spec}{\operatorname{Spec}}

\newcommand{\pd}{\operatorname{PD}}

\newcommand{\DFuk}{D\!\operatorname{Fuk}}
\newcommand{\DCoh}{D\!\operatorname{Coh}}

\newcommand{\Coh}{\operatorname{Coh}}

\newcommand{\Pic}{\operatorname{Pic}}
\newcommand{\NS}{\operatorname{NS}}
\newcommand{\Amp}{\operatorname{Amp}}

\newcommand{\pow}[1]{[\![ {#1} ]\!]}

\newtheorem{thm}{Theorem}[section]
 
\newtheorem{prop}[thm]{Proposition}
\newtheorem{lemma}[thm]{Lemma}
\newtheorem{cor}[thm]{Corollary}

\theoremstyle{definition}
\newtheorem{definition}[thm]{Definition}

\theoremstyle{remark}

\newtheorem{rmk}[thm]{Remark}

\title[A Thomas-Yau-Joyce result for Lagrangian spheres]{A Thomas-Yau-Joyce result for Lagrangian spheres in K3 surfaces}
\author{Jacopo Stoppa}
 
\date{September 30, 2026}

\begin{document}

\maketitle

\begin{abstract} We consider objects in the Fukaya category of a class of K3 surfaces defined by certain Lagrangian spheres. Assuming that homological mirror symmetry for K3 surfaces holds  with sufficiently strong (expected) properties, we prove that, in this case, stability with respect to a suitable Bridgeland stability condition implies the existence of an isomorphic \emph{special} Lagrangian sphere, as predicted by the general Thomas-Yau-Joyce conjectures. In particular this holds unconditionally for suitable quartic surfaces in $\PP^3$ or sextics in $\PP(3,1,1,1)$ and their mirrors. A variant holds on the Calabi-Yau threefolds obtained by taking the product of our K3 surfaces with an elliptic curve. These seem to be the first results of this type on compact manifolds.
\end{abstract}

\section{Introduction}\label{Intro}

Let $(\chX, \chom, \chOm)$ denote a K3 surface endowed with a Ricci flat K\"ahler form and a holomorphic volume form (the reason for our notation will become clear in a moment). A (real) $\chom$-Lagrangian submanifold $L \subset \chX$ is called \emph{special Lagrangian (sLag)} if the restriction $\chOm|_{L}$ has constant phase; equivalently, $L$ is a Harvey-Lawson calibrated submanifold with respect to (some rotation of) $\chOm$ (see \cite{HarveyLawsonCalibrated}). 

Suppose $L \subset \chX$ is a fixed embedded Lagrangian sphere. In the present work we study the problem of whether there exists a \emph{special Lagrangian (sLag) sphere} $\tilde{L} \subset \chX$ such that \emph{$L$, $\tilde{L}$ define the same object in the Fukaya category} $\DFuk(\chX, \chom)$. This is a special case of the general Thomas-Yau-Joyce conjectures \cite{Joyce_ThomasYau, Thomas_MomentMirror, ThomasYau} for Calabi-Yau manifolds in all dimensions. In turn it generalises the problem of constructing such special Lagrangians within a fixed Hamiltonian isotopy class on K3 surfaces (studied e.g. by Schoen and Wolfson \cite{SchoenWolfson}). It is expected that there should exist a stability condition $\chs$ on $\DFuk(\chX, \chom)$ in the sense of Bridgeland such that the (unique) sLag $\tilde{L}$ exists if and only if the object in $\DFuk(\chX, \chom)$ defined by $L$ is $\chs$-stable. The case of Lagrangian spheres in K3 surfaces is discussed in particular in \cite{ThomasYau}, Section 5.3. 

Here we focus on showing the implication \emph{Bridgeland stability $\Rightarrow$ sLag} for certain Lagrangian spheres on suitable K3 surfaces, as well as products of these K3s with an elliptic curve. Briefly, in our examples, we show that one can avoid more standard but difficult analytic methods (such as the Lagrangian mean curvature flow or glueing) by assuming instead that homological mirror symmetry for K3 surfaces holds with sufficiently strong (expected) properties; see Remark \ref{ProofRmk} for some more details on the method of proof. In particular our result holds unconditionally for suitable quartic surfaces in $\PP^3$, sextics in $\PP(3,1,1,1)$ and their mirrors, and seems to give the first example of the principle \emph{Bridgeland stability $\Rightarrow$ sLag} holding on \emph{compact} manifolds. 

\begin{rmk} It is shown in \cite{ThomasYau}, Section 5.3 that there exists a unique Fukaya equivalence class within a fixed homology class of Lagrangian spheres on a K3 surface admitting a sLag representative: namely, the class corresponding to the unique holomorphic representative in a hyperk\"ahler rotation. So the problem we study is precisely that of \emph{characterising this sLag class in terms of Bridgeland stability}.
\end{rmk}
Let us describe our main result informally (all the details are provided in Sections \ref{GrossWilsonSec} and \ref{HMSSec}).  We consider the case when 
\begin{equation*}
(\chX_s, \chB_s + \ii\chom_s, \chOm_s) 
\end{equation*}
is the classical (Hodge-Theoretic) mirror of a K3 surface 
\begin{equation*}
(X, \bB_s + \ii s \omega, \Omega),\,s\gg 0
\end{equation*}
in the sense discussed by Gross-Wilson in \cite{GrossWilson}, Section 1 (based on Dolgachev's approach \cite{DolgaPolarMirror}). Here $X$ is endowed with a \emph{sufficiently general K\"ahler class} $\omega$, $s \gg 0$ is the \emph{large volume/complex structure limit}, and the construction depends on a B-field class $\bB_s$, as well as a suitable choice of a hyperbolic sublattice $H = \langle \sigma_0, E \rangle \subset H^2(X, \Z)$.

Note however that our choices of $H$ and of the B-field will be very special, namely, we will assume that $\sigma_0$ is of type $(1,1)$ and that there is a lift $\hat{\bB}_s = s \hat{\bB} \in H^{1, 1}(X, \R)$ which is \emph{linear} in the large volume/complex structure parameter $s$ (we will actually make a unique distinguished choice of such a lift, following Gross-Wilson \cite{GrossWilson}, Section 1; see Definition \ref{AdmissDef}). This linear regime is the ``large scaling limit" studied in \cite{J_toricThomasYau}, Section 6 and more generally in \cite{YWFan_stability}. The complex structure of $X$ is also chosen generally with respect to these fixed properties.   

\begin{thm}[Theorem \ref{MainThm}] Choose our pair $X, \chX_s$ as above. Suppose that there is a fixed homological mirror symmetry equivalence 
\begin{equation*}
\psi^*\DCoh(\cX_{\xi}) \cong \DFuk(\chX_s, \chB_s + \ii \chom_s)
\end{equation*}
satisfying sufficiently strong, expected properties (here $\cX_{\xi}$ denotes the generic fibre of the mirror family for $(\chX_s, \chB_s + \ii \chom_s)$ and $\psi$ is the mirror map evaluated at the K\"ahler parameters, see Section \ref{HMSSec}). Let us denote by 
\begin{equation*}
\cL_s \in \DFuk(\chX_s, \chom)        
\end{equation*}
the object in the Fukaya category mirror to the structure sheaf $\olo_{\cX_{\xi}}$. Then $\cL_s$ can be represented by a Lagrangian sphere, and there is a sequence of Bridgeland stability conditions $\chs_s$ on $\DFuk(\chX_s, \chom_s)$ such that, if $\cL_s$ is $\chs_s$-stable for $s \gg 0$, $\cL_s$ can be represented by a (unique, smooth) \emph{special} Lagrangian sphere. This holds unconditionally for suitable quartic surfaces in $\PP^3$, sextics in $\PP(3,1,1,1)$ and their mirrors.
\end{thm}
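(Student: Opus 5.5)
The plan is to transport the stability question through mirror symmetry to the B-side, settle it there, and bring the answer back to the A-side via hyperk\"ahler rotation rather than analysis. Take $\chs_s$ to be the pullback along the fixed equivalence $\psi^*\DCoh(\cX_{\xi}) \cong \DFuk(\chX_s, \chB_s + \ii \chom_s)$ of a geometric Bridgeland stability condition $\sigma_s = (Z_s, \cP_s)$ on $\DCoh(\cX_\xi)$, with central charge
\begin{equation*}
Z_s(E) = -\int e^{-(\bB_s + \ii s\omega)}\ch(E)\sqrt{\operatorname{td}(X)}.
\end{equation*}
The required property of the mirror equivalence is that it intertwines this Mukai pairing with the period $\int \chOm_s$, i.e.\ $Z_s(E) = \int_{\cL}\chOm_s$ up to a fixed normalisation, where $\cL$ is mirror to $E$. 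For the objects in question, $\cL_s$ is mirror to $\olo_{\cX_\xi}$, so its class is the mirror of the Mukai vector $v(\olo) = (1,0,1)$, which has square $-2$. In the Gross-Wilson/Dolgachev setting this is the class $\sigma_0$ (or $\sigma_0 - E$, according to the conventions fixed in Section~\ref{GrossWilsonSec}). Being a $(-2)$-class, it is represented by a Lagrangian sphere: for instance, take a Dehn-twist/vanishing-cycle sphere in the mirror family and identify it with $\cL_s$ using the assumed compatibility of HMS with monodromy and spherical twists. This disposes of the first claim of the statement.

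For the sLag claim I follow the Thomas--Yau argument recalled in the Remark above. A homology class $\delta$ with $\delta^2 = -2$ carries a sLag representative of phase $\theta$ if and only if, after hyperk\"ahler rotation to the complex structure in which $\Rea(e^{-\ii\theta}\chOm_s)$ is K\"ahler and $\chom_s, \Imm(e^{-\ii\theta}\chOm_s)$ span the holomorphic form, $\delta$ becomes an effective $(1,1)$-class represented by a smooth irreducible rational curve. Concretely, this requires three things:
\begin{itemize}
\item[(i)] $\int_\delta \chom_s = 0$;
\item[(ii)] $\Imm\bigl(e^{-\ii\theta}\int_\delta\chOm_s\bigr) = 0$;
\item[(iii)] $\delta$ is not decomposable as a sum of effective $(-2)$-classes with positive $\Rea\bigl(e^{-\ii\theta}\chOm_s\bigr)$-area.
\end{itemize}
Conditions (i) and (ii) are automatic by choosing $\theta = \arg\int_\delta\chOm_s$, since $\delta$ is Lagrangian. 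Condition (iii) is the real content. A decomposition $\delta = \sum \delta_i$ into $(-2)$-classes, each of phase within $(\theta - \pi/2, \theta + \pi/2)$ and summing to $\delta$, yields candidate destabilising Lagrangian spheres; these are mirror to spherical objects $E_i$ on $\cX_\xi$ with $\sum v(E_i) = v(\olo)$ and phases straddling $\phi(\olo)$. The strategy is to show that for $s \gg 0$, in the linear large-scaling regime $\hat{\bB}_s = s\hat{\bB}$, such a decomposition forces an actual $\sigma_s$-destabilising sequence for $\olo$, or at least a non-semistable Harder--Narasimhan filtration, contradicting the assumed stability. Uniqueness then comes from the uniqueness of the holomorphic representative noted in the Remark, and smoothness from the irreducible rational curve being a smooth $\PP^1$.

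The main obstacle is exactly this step: converting a purely numerical obstruction in the rotated K\"ahler cone, namely reducibility of the effective $(-2)$-class, into a genuine subobject on the B-side. To handle it, I would use the genericity of $X$ and $\omega$, which makes $\NS$ small (essentially $H$ plus a controlled part), to enumerate the $(-2)$-classes that can occur. I would then compute $Z_s$ asymptotically as $s\to\infty$: the leading term is quadratic in $s$ with the linear B-field, and comparing phases shows that only classes of rank $0$ or $1$ can have the right phase ordering. Such classes correspond to line bundles or torsion sheaves $\olo_C(k)$, for which $\operatorname{Hom}(\olo(D), \olo)$ or $\operatorname{Hom}(\olo, \olo_C(k))$ is nonzero, and this destabilises $\olo$. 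Finally, for the unconditional statement, I would invoke the known HMS results (Seidel for the quartic, Sheridan--Smith for the general Greene--Plesser mirror pairs, including the sextic in $\PP(3,1,1,1)$) and verify that they satisfy the needed central-charge compatibility and the spherical-object correspondence.
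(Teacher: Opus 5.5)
\textbf{There is a genuine gap.} Your argument never leaves the level of the homology class $\delta=[\cL_s]$, but the theorem is a statement about the object $\cL_s\in\DFuk(\chX_s,\chom_s)$.

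Conditions (i)--(iii) decide whether the class $\delta$ contains a special Lagrangian. A $(-2)$-class, however, contains many pairwise non-isomorphic Lagrangian spheres. For instance, $\tau_V^2(L)$ is homologous to $L$ for any sphere $V$, but is in general not isomorphic to it. Only one of these Fukaya classes contains the sLag. So even granting (iii), you only obtain some sLag sphere $\tilde{L}$ with $[\tilde{L}]=[\cL_s]$, and nothing ties $\tilde{L}$ to $\cL_s$. ``Uniqueness of the holomorphic representative'' is uniqueness inside the homology class, not an identification.

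In fact, for the general $\omega$ assumed here, the class $\PD(\sigma_0)$ carries a unique smooth sLag sphere whether or not $\cL_s$ is stable (see the proof of Lemma \ref{SmoothnessLem}). The discussion right after Theorem \ref{MainThm} stresses that this sLag need not be isomorphic to $\cL_s$. So your condition (iii) is generically automatic, and the stability hypothesis plays no real role in your argument; this shows it misses the content of the statement. An abstract repair would not work either. Mukai's $\chi=2$ argument shows that $\chs_s$-stable spherical objects with the same class and phase are isomorphic, but using it would require knowing that the sLag is $\chs_s$-stable, which is the open converse implication.

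Two further points. The step ``a numerical decomposition of $\delta$ forces a destabilising subobject of $\olo$'' is unjustified, since numerical classes do not produce morphisms. The same homology-versus-object conflation affects your first claim: identifying a vanishing-cycle sphere in the class $\delta$ with $\cL_s$ is not given by the hypotheses. Instead, that claim is immediate from the homological SYZ fibration, under which $\cL_s=\cL(\olo_{\cX_{\xi}})$ is a Lagrangian section over $S^2$.

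\textbf{What is missing is a mechanism that pins down the Fukaya class.} The paper gets this by deformation rather than by analysing reducibility:
\begin{enumerate}
\item The Gamma property gives $[\cL_s]=\PD(\sigma_0)$ (Lemma \ref{GammaLemma}).
\item Stability of $\olo_{\cX_{\xi}}$, for the central charge twisted by $s\hat{\bB}+\sigma_0$, is turned via Theorem \ref{TwistedPosThm} into twisted ampleness: $\hat{\eta}$ is K\"ahler on $(X,\Omega)$.
\item Twisted ampleness is exactly what allows one to build, using Schumacher's moduli space and K\"ahler Torelli with the normalisation $E.\Rea\Omega=1$, a family $\cZ_t$ with constant K\"ahler class. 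This family joins $\cZ_1\cong\chX_s$ to a fibre $\cZ_0$ whose $\hat{\psi}$-rotation is $(X,\eta,\Omega)$ (Corollary \ref{TwistedToFamilyCor}).
\item On $\cZ_0$, the sLag in $\PD(\sigma_0)$ is the holomorphic curve $D$. Since $E.\sigma_0=1$, it is a section of the homological SYZ fibration, hence mirror to a line bundle.
\item An open--closed argument along $T$ then transports this to $t_1$. It uses rigidity of the spheres $L_t$ and the fact that deformations of line bundles remain line bundles. The conclusion is that the mirror of $L_1$ is a line bundle on $\cX_{\xi}$ in the class of $\olo_{\cX_{\xi}}$, hence $\olo_{\cX_{\xi}}$ itself, i.e.\ $L_1\cong\cL_s$.
\end{enumerate}
That identification is where stability enters, and it has no counterpart in your proposal.
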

By the arguments of \cite{ThomasYau}, Section 6.3, we expect that the object $\cL_s \in \DFuk(\chX_s, \chom)$ is \emph{not} always isomorphic to a \emph{special} Lagrangian sphere, as the parameter $\hat{\bB}$ in our construction varies (although we do not have a concrete example at the moment). This expectation is also compatible with the more analytic viewpoint sketched in Section \ref{dHYMSec} (see Remark \ref{ProofRmk} below).

We cannot prove the converse implication \emph{sLag $\Rightarrow$ stability} at present. Under certain assumptions, Li \cite{YangLi_ThomasYau} shows that compact sLags in any Stein Calabi-Yau manifold must satisfy a condition reminiscent of Bridgeland stability. Lotay and Oliveira \cite{LotayOliv_MCF, Lotay_GibbonsHawkingSLag}, building on the work of Lotay, Schulze and Sz\'ekelydidi \cite{LotaySchulzeSzeke}, show that a form of the Thomas-Yau-Joyce conjectures (not directly involving Bridgeland stability) holds on the non-compact, complete Calabi-Yau surfaces with circle symmetry obtained by the Gibbons-Hawking ansatz. Other recent results and proposals concerning the Thomas-Yau-Joyce conjectures include \cite{YLShen_K3FibredSLag}, Section 6.3 and \cite{HaidenKatzKontPand}, Section 3.5. The works of the author \cite{J_toricThomasYau, J_sLagStability, StoppaSmoothings} and Fan \cite{FanSLags} show the implication \emph{Bridgeland stability $\Rightarrow$ sLag} for suitable non-compact Lagrangians in examples of non-compact almost Calabi-Yau manifolds (i.e. some Landau-Ginzburg models). 

\begin{rmk} We emphasise that $\chX_s$ is \emph{never} obtained as a hyperk\"ahler rotation of $X$ in our construction (it becomes asymptotically close to a hyperk\"ahler rotation of $X$ as $s \to +\infty $ only in the very special case when $\bB = 0$). The fact that the mirror of a K3 surface is not given by a hyperk\"ahler rotation in general follows from the discussion in \cite{GrossWilson}, Section 1 (recalled in our Section \ref{GrossWilsonSec}), and Gross discusses the point explicitly in \cite{GrossMO}. This is of crucial importance for our construction. 
\end{rmk}
Our argument provides a reduction from a \emph{differential-geometric} problem to \emph{symplectic/algebro-geometric} properties of mirror symmetry on K3 surfaces, which are known at least in some cases (i.e. at least for suitable quartic surfaces in $\PP^3$, sextics in $\PP(3,1,1,1)$ and their Batyrev mirrors, see Proposition \ref{MirrorProp}). Note that the Thomas-Yau-Joyce conjecture predicts that a reduction from differential-geometric objects (sLags) to symplectic/algebro-geometric (Bridgeland stable) objects is \emph{always} possible; here we show how it can be achieved at least for some very special objects on very special compact Calabi-Yau manifolds.

\begin{rmk}[Method of proof]\label{ProofRmk} The main virtue of our argument is that it shows clearly the role of Bridgeland stability on the Fukaya category in this case. As we will see (Corollary \ref{StabToFamilyCor}), the key point is that Bridgeland stability guarantees the positivity of a suitable twist of $\omega$ by the section $\sigma_0$ and the B-field lift $\hat{\bB}$ (this is known as \emph{twisted ampleness}). Through the Torelli theorem, this twisted ampleness yields the existence of a suitable \emph{family} $\cZ$ of K3 surfaces, with a special fibre $\cZ_1$ isomorphic to $(\chX_s, \chB_s + \ii\chom_s, \chOm_s)$, and a further special fibre $\cZ_0$, on which a sLag with required properties exists (this is summarised in the diagrams \eqref{Diagram1}, \eqref{Diagram2}). A sufficiently strong form of homological mirror symmetry then allows to deform this solution from $\cZ_0$ to $\cZ_1 = \chX$.

The notion of twisted ampleness appears in the theory of deformed Hermitian Yang-Mills (dHYM) connections, going back to the work of Jacob and Yau \cite{JacobYau_special_Lag} (see e.g. \cite{CollinsLoYau_K3} for a study in the case of K3 surfaces; from our current viewpoint this should be read in conjunction with the results of \cite{YWFan_stability} and \cite{J_toricThomasYau}, Section 6). Although our proof of Theorem \ref{MainThm} does \emph{not} use dHYM connections, it seems that it would have been hard to arrive at the construction of the family above without relying on the motivation they provide. We sketch the dHYM approach (or rather the analytic difficulties it entails) in Section \ref{dHYMSec}. 
\end{rmk}

\begin{rmk} Although the general Thomas-Yau-Joyce conjectures are formulated independently of mirror symmetry, the latter provided essential motivation for Thomas' original proposal \cite{Thomas_MomentMirror}. In \cite{Thomas_MomentMirror}, Section 3, Thomas discusses the case of K3 surfaces, and makes the remarkable observation that a hyperk\"ahler rotation of Donaldson's moment map picture for the $J$-equation (see \cite{Donaldson_momentmaps_diffeomorphisms}) leads to a genuine (infinite dimensional) moment map picture for special Lagrangian embeddings. We believe that this viewpoint is closely related to the approach presented here, especially since the $J$-equation on surfaces is essentially equivalent to the dHYM equation (more precisely, the $J$-equation and the dHYM equation are both equivalent to complex Monge-Amp\`ere equations).
\end{rmk}
Our main result Theorem \ref{MainThm} can be used to construct examples of objects in the Fukaya category of some compact Calabi-Yau threefolds satisfying the same Thomas-Yau-Joyce principle: the construction depends on parameters, such that the objects are (expectedly) not always isomorphic to sLags, and a sufficient condition for the existence of such sLags is given by stability with respect to a suitable (weak) Bridgeland stability condition.

We consider the case when $\chZ_s$ is the product Calabi-Yau threefold, \emph{depending on $s >0$}, given by
\begin{align*}
(\chZ_s, \omega_{\chZ_s}, \Omega_{\chZ_s}) &:= (\chX_s \times \chC, \chB_s + \ii(p^* \chom_s + q^*\omega_{\chC}), p^* \chOm_s \wedge q^*\Omega_{\chC}),
\end{align*}
where $(\chC, \omega_{\chC}, \Omega_{\chC})$ denotes an elliptic curve and $p$, $q$ are the projections. For any choice of line bundle $\cE_C$ on the mirror $C$ of $\chC$ and for $s \geq 1$, we denote by  
\begin{equation*}
\cL_{\chZ_s} \in \DFuk(\chZ_s, \omega_{\chZ_s})
\end{equation*} 
the object mirror to 
\begin{equation*}
p^*(\olo_{\cX_{\xi}}) \otimes q^*(\cE_C) \in \Pic(Z). 
\end{equation*}
\begin{thm}[Theorem \ref{ProductThm}] Suppose that homological mirror symmetry for the product $\chZ_s = \chX_s \times \chC$ holds with sufficiently strong (expected) properties. There exists a sequence of weak Bridgeland pre-stability conditions $\chs_{C, s}$ on $\DFuk(\chZ_s, \omega_{\chZ_s})$ such that if $\cL_{\chZ_s}$ is $\chs_{C, s}$-stable for $s \gg 0$, then $\cL_{\chZ_s}$ is isomorphic to a smooth special Lagrangian, homeomorphic to $S^2 \times S^1$. 

This holds unconditionally for $X, \chX_s$ given by suitable quartic surfaces in $\PP^3$, sextics in $\PP(3,1,1,1)$ and their mirrors.
\end{thm}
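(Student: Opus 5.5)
The plan is to reduce to the K3 case, Theorem \ref{MainThm}, by a product construction on both the A-side and the B-side. On the elliptic curve $\chC$ every object of $\DFuk(\chC, \omega_{\chC})$ mirror to a line bundle $\cE_C$ on $C$ is represented by an embedded closed geodesic, i.e. a straight line $\gamma \subset \chC$ of a fixed slope determined by $\deg \cE_C$. Such a $\gamma$ is automatically special Lagrangian for $(\omega_{\chC}, \Omega_{\chC})$, with phase $\theta_C$ fixed by $\deg\cE_C$. If $\tilde{L}_s \subset \chX_s$ is a smooth sLag sphere with phase $\theta_s$ representing $\cL_s$, then $\tilde{L}_s \times \gamma$ is Lagrangian for $p^*\chom_s + q^*\omega_{\chC}$. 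Moreover $p^*\chOm_s \wedge q^*\Omega_{\chC}$ restricts to it with constant phase $\theta_s + \theta_C$. So it is a smooth sLag homeomorphic to $S^2 \times S^1$.

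The steps, in order, are as follows.

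First, I would formulate the strong HMS hypothesis for the product. This asks for an equivalence $\psi^*\DCoh(Z) \cong \DFuk(\chZ_s, \omega_{\chZ_s})$ compatible with exterior tensor products: it should send $p^*\cF \otimes q^*\cG$ to the product Lagrangian of the mirrors of $\cF$ and $\cG$, in the sense of the K\"unneth/product functor on Fukaya categories. This is the natural expected property, and I would check that it is available unconditionally for the quartic and sextic examples. For those examples the K3 equivalence of Proposition \ref{MirrorProp} combines with Polishchuk--Zaslow for the elliptic curve. The product of Fukaya categories then embeds in the Fukaya category of the product, by Amorim/Ganatra-type K\"unneth results restricted to product Lagrangians.

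Second, I would define the weak pre-stability condition $\chs_{C,s}$ by transporting through mirror symmetry a product-type weak stability function on $\DCoh(Z)$. The central charge should be $Z_{C,s}(E) = Z_{\chs_s}(p_*\,\cdot\,)$ twisted by the degree along $C$, for instance the central charge built from the $s$-dependent K\"ahler class on the K3 factor and only the degree in the curve direction, so that it factors through the slicing on the K3 factor. This is why the notion is only weak. The heart should be chosen as a tilt compatible with $p^*$ of the K3 heart.

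Third, and this is the key point, I would show that $\chs_{C,s}$-stability of $\cL_{\chZ_s}$ implies $\chs_s$-stability of $\cL_s$. Suppose $\cL_s$ were destabilised by a subobject $A \to \cL_s$ with larger phase. Then $A \boxtimes \cE_C \to \cL_s \boxtimes \cE_C$ would destabilise in $\chs_{C,s}$, because the central charge is multiplicative on exterior products up to the fixed factor coming from $\cE_C$, and $p^*(-)\otimes q^*\cE_C$ is exact and preserves the hearts. Once this holds, Theorem \ref{MainThm} provides the sLag sphere $\tilde{L}_s$ for $s \gg 0$. The product construction of the first paragraph, together with the compatibility of HMS with products, then shows that $\tilde{L}_s \times \gamma$ represents $\cL_{\chZ_s}$.

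I expect the main obstacle to be this third step, namely designing $\chs_{C,s}$ so that it is a genuine (weak) pre-stability condition on the threefold while making the exterior product functor send destabilising sequences to destabilising sequences. The simplest attempt, and the one I would try first, is to let the central charge ignore the curve direction except through rank and degree. This is exactly why only a weak pre-stability condition is claimed, since support and Harder--Narasimhan properties may fail in general.
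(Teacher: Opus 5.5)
\textbf{The gap is your Step 2.} The statement asserts that the weak pre-stability conditions $\chs_{C,s}$ \emph{exist}. You only sketch a central charge and a heart ``chosen as a tilt compatible with $p^*$'', and you flag designing them as the main obstacle. So as written nothing is actually constructed.

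The missing input is Liu's lift, Theorem \ref{LiuWeakThm}:
\begin{itemize}
\item take the Abramovich--Polishchuk heart $\cA_C=\{F\colon p_*(F\otimes q^*\olo(n))\in\cA \text{ for } n\gg 0\}$, which is the Noetherian heart of a bounded t-structure on $\DCoh(\cX_{\xi}\times C)$;
\item take the limiting central charge $\opZ_C(F)=\lim_{n} \opZ(p_*(F\otimes q^*\olo(n)))/(n\deg\olo(1))$;
\item Liu proves that $(\cA_C,\opZ_C)$ is a weak pre-stability condition, and $\chs_{C,s}$ is its transport through the mirror equivalence.
\end{itemize}
Your explanation of why the notion is only weak is also off. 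It is not a failure of Harder--Narasimhan or support properties. Rather, $\opZ_C$ vanishes on nonzero objects of $\cA_C$ supported on finitely many fibres of $q$. For example, for $\cF\in\cA$ one has $p_*(\iota_{c*}\cF\otimes q^*\olo(n))\cong\cF$ for every $n$, so $\opZ_C(\iota_{c*}\cF)=0$.

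\textbf{With Liu's construction in place, your Step 3 works.} For $\cF\in\cA$, the projection formula gives $p_*(p^*\cF\otimes q^*\cE_C(n))\cong\cF\otimes H^0(C,\cE_C(n))$ for $n\gg0$. Hence $p^*(-)\otimes q^*\cE_C$ is an exact functor $\cA\to\cA_C$, and $\opZ_C(p^*\cF\otimes q^*\cE_C)=\opZ(\cF)\neq 0$ (this is Liu's Corollary 3.5(a)). Now suppose $A\subset\olo_{\cX_{\xi}}$ is a proper nonzero subobject in $\cA$ with $\phi(A)\geq\phi(\olo_{\cX_{\xi}})$. You need $\geq$ here, not $>$, to contradict stability. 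Then $p^*A\otimes q^*\cE_C$ is a proper nonzero subobject of $p^*\olo_{\cX_{\xi}}\otimes q^*\cE_C$ in $\cA_C$ with the same phase inequality. Neither the subobject nor the quotient has vanishing $\opZ_C$, so the weak conventions play no role.

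\textbf{Comparison with the paper.} This is a genuinely different route for the key step. The paper argues in the forward direction:
\begin{itemize}
\item from $\chs_{C,s}$-stability it uses Liu's Proposition 3.14 to get $0\to K\to F\to Q\to 0$ with $\opZ_C(Q)=0$;
\item it uses Propositions 3.10 and 3.13, together with t-flatness, to obtain semistability of the fibres $K_c$;
\item it uses Corollary 3.5(b) to see that $Q_c=0$ on a Zariski open set;
\item finally it needs the genericity assumption to exclude strict semistability of $\olo_{\cX_{\xi}}$.
\end{itemize}
Your contrapositive needs only exactness of the pullback and Corollary 3.5(a). It gives stability directly, and it avoids both checking t-flatness of $K$ and invoking genericity at this step.

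The remaining steps coincide with the paper: Theorem \ref{MainThm}, Polishchuk--Zaslow geodesics, and the product of sLags. As for the unconditional quartic and sextic cases, your appeal to K\"unneth-type results is only a sketch. However, the paper's Section \ref{ProductSec} proof also simply assumes product HMS compatibility, so you are not losing anything relative to it there.
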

Theorem \ref{ProductThm} follows from Theorem \ref{MainThm} by applying the construction of stability conditions on products with curves due to Liu \cite{Liu_products}. Note that Liu constructs genuine stability conditions, not just weak ones. It would be interesting to see if Theorem \ref{ProductThm} also holds for these genuine stability conditions.

Section \ref{GrossWilsonSec} contains some background on mirror symmetry for K3 surfaces following Gross-Wilson \cite{GrossWilson}. Section \ref{HMSSec} explains the required properties from homological mirror symmetry and provides a precise statement of our main result, Theorem \ref{MainThm}. The latter is proved in Section \ref{MainSec}. Our examples on products are discussed in Section \ref{ProductSec}, where Theorem \ref{ProductThm} is proved. Section \ref{dHYMSec} contains a brief and conjectural discussion of the alternative analytic approach using dHYM connections in conjunction with the results of Gross-Wilson \cite{GrossWilson} on the SYZ conjecture for K3 surfaces.\\

\noindent{\textbf{Acknowledgements.}} I am grateful to Mahmoud Elimam, Yu-Wei Fan, Rah\'ul Gonz\'alez Molina, Dominic Joyce, Ailsa Keating, Yu-Shen Lin, Jason Lotay, Emanuele Macr\`i, Mirko Mauri, Helge Ruddat, Paolo Stellari and  Richard Thomas for helpful conversations.\\

\noindent{\textbf{Note on LLMs.}} Large language models were not used for the present work.
  
\section{Gross-Wilson setup}\label{GrossWilsonSec}  Our starting point is the description of a certain relation between Hodge-theoretic mirror symmetry and hyperk\"ahler rotation on K3 surfaces given by Gross and Wilson in \cite{GrossWilson}, Section 1, building on the results of Dolgachev \cite{DolgaPolarMirror}. Let us recall their construction (see also \cite{GrossFibrations2}, Section 7), following their notation closely. 

We fix a K3 surface $X$ with K3 lattice $\bL := H^2(X, \Z)$ and a choice of elements $E,\,\sigma_0 \in \bL$ with 
\begin{equation*}
E^2 = 0,\,\sigma^2_0 = -2,\,E.\sigma_0 = 1,
\end{equation*}
thus generating a copy of the hyperbolic lattice $H$ contained in $\bL$. 

Classical \emph{Hodge-theoretic} mirror symmetry for K3 surfaces is an involution on the set of triples 
\begin{equation*}
(X, \bB + \ii \omega, \Omega)
\end{equation*}
where $X$ is a K3 surface, $\omega$ is a K\"ahler class, $\bB$ is a B-field, and $\Omega$ is the class of a holomorphic $2$-form, satisfying
\begin{equation*}
\omega \in E^{\perp}\otimes \R,\,\bB \in E^{\perp}/E \otimes \R
\end{equation*}
($(-)^{\perp}$ is taken with respect to the intersection pairing). Morally, $E$ represents the class of a fibre of a special Lagrangian fibration (with respect to which we compute mirrors of $X$), with a \emph{smooth} (not necessarily Lagrangian) section $\sigma_0$.

One further assumes the normalisation conditions (on cohomology classes)
\begin{equation*}
\Imm \Omega \in E^{\perp} \otimes \R,\,\omega^2 = (\Rea \Omega)^2 = (\Imm \Omega)^2.
\end{equation*}
Then the involution can be described explicitly by the relations
\begin{align}\label{ClassicalMirrorMap}
\nonumber\chOm &= (E. \Rea \Omega)^{-1}(\sigma_0 + \bB + \ii \omega)\mod E\\
\nonumber\chB &= (E. \Rea \Omega)^{-1} \Rea \Omega - \sigma_0\mod E\\
\chom &= (E. \Rea \Omega)^{-1} \Imm \Omega \mod E,
\end{align}
together with
\begin{align*}
& \chom^2 = (\Rea \chOm)^2 = (\Imm \chOm)^2,\\
& \chom. \Rea \chOm = \chom. \Imm \chOm = (\Rea \chOm).(\Imm \chOm) = 0. 
\end{align*} 

These relations fix the actual classes $\chom$, $\chOm$, not just their mod $E$ reductions. Gross-Wilson observed that these have a nice expression in terms of a certain preferred lift of the B-field class (from $E^{\perp}/E \otimes \R$ to $E^{\perp} \otimes \R$).
\begin{lemma}[\cite{GrossWilson}, Section 1]\label{GrossWilsonLem} Let $\hat{\bB}$ denote the unique lift of $\bB$ to $E^{\perp} \otimes \R$ such that $\hat{\bB} . \sigma_0 = 0$. Then we have 
\begin{align*}
&\chOm = (E.\Rea\Omega)^{-1}\left(\sigma_0 + \hat{\bB} + \ii \omega + \left(\frac{ \omega^2 - \hat{\bB}^2}{2} + 1 - \ii \omega.(\sigma_0 + \hat{\bB})\right)E\right),\\ 
&\chom = (E.\Rea\Omega)^{-1}\left(\Imm \Omega - (\Imm \Omega . (\sigma_0 + \hat{\bB})) E \right).
\end{align*}
\begin{proof} The claim follows straightforwardly from the identities
\begin{equation*}
\chOm^2 = \chom. \chOm = 0,
\end{equation*}
which in turn are implied by the normalisation conditions
\begin{align*}
& \chom^2 = (\Rea \chOm)^2 = (\Imm \chOm)^2,\\
& \chom. \Rea \chOm = \chom . \Imm \chOm = (\Rea \chOm).(\Imm \chOm) = 0. 
\end{align*} 
\end{proof}
\end{lemma}  
\begin{rmk}\label{NormalisationRmk} By the normalisation condition $\omega^2 = (\Rea \Omega)^2 = (\Imm \Omega)^2$, scaling $\omega$ by $\mu > 0$ induces the same scaling $\Rea \Omega \mapsto \mu \Rea \Omega$, $\Imm \Omega \mapsto \mu\Imm \Omega$. Thus, up to replacing $\omega$ by $\mu \omega$ for appropriate $\mu$ (which is irrelevant for our purposes), we can assume the further normalisation condition 
\begin{equation*}
E.\Rea\Omega = 1.
\end{equation*}
\end{rmk}
\section{Mirror symmetry and statement of the main result}\label{HMSSec}
Fix $(X, s(\bB + \ii \omega), \Omega)$, with sublattice $H = \langle \sigma_0, E \rangle \subset \bL$, depending on a scaling parameter $s > 0$. Let $(\chX_s, \chB_s + \ii \chom_s, \chOm_s)$ denote its Hodge-theoretic mirror. 
\begin{definition}\label{AdmissDef} We say that a pair $X, \chX_s$ with sublattice $H$ as above is \emph{admissible} if $\omega$, $\sigma_0$ as well as the normalized lift $\hat{\bB}$ of $\bB$ in the sense of Lemma \ref{GrossWilsonLem} belong to $\NS(X)_{\R} = \Pic(X)\otimes \R$. (Note that this only depends on properties of $(X, \bB + \ii \omega, \Omega)$ and the sublattice $H$, although we think of it as a property of the pair $X, \chX_s$).
\end{definition}
\begin{definition} A \emph{homological mirror} for $(\chX_s, \chB_s + \ii \chom_s, \chOm_s)$ is a formal family of K3 surfaces $\cX \to \Spec \C\pow{q}$, such that there is an equivalence
\begin{equation*}
\psi^*\DCoh(\cX_{\xi}) \cong \DFuk(\chX_s, \chB_s + \ii \chom_s)
\end{equation*}
where $\psi$ denotes a suitable automorphism of $\C\pow{q}$ (the mirror map, evaluated at the K\"ahler parameters $\chB_s + \ii \chom_s$). 
\end{definition} 
Our main argument is conditional on a sufficiently strong version of homological mirror symmetry. Let us spell out the properties we shall need. 
\begin{definition}[SYZ-type fibrations]\label{SYZtypeDef} Suppose $(Z, B_Z + \ii\omega_Z, \Omega_Z)$ is a K3 surface and $\cY$ is a K3 surface over $\C\pow{q}$, satisfying the mirror equivalence
\begin{equation*}
\DFuk(Z, B_Z + \ii\omega_Z) \cong \psi^*\DCoh(\cY_{\xi}).
\end{equation*}
A \emph{homological SYZ fibration} is a Lagrangian fibration $\pi\!: Z \to S^2$ inducing a bijection
\begin{align*}
\Pic(\cY_{\xi}) &\xleftrightarrow{1:1} \{\textrm{Lagrangian sections of } \pi\}/\sim,\\
\cE &\mapsto \cL(\cE), 
\end{align*} 
compatible with the fixed mirror equivalence, where the relation $\sim$ denotes fibre-preserving Hamiltonian isotopy. In particular, there is a well defined class $[\cL(\cE)] \in H_2(Z, \Z)$. 
\begin{rmk}
This notion is modelled on the results of Hacking-Keating \cite{HackKeatingK3} (see in particular \cite{HackKeatingK3}, Theorem 4.2 and Proposition 4.23).
\end{rmk}
\end{definition}
\begin{definition}[Hodge/HMS compatibility]\label{SYZtypeProperties} Suppose we are given an admissible Hodge-theoretic mirror pair in the sense of Definition \ref{AdmissDef},  
\begin{equation*}
(X, s(\bB + \ii \omega), \Omega),\,(\chX_s, \chB_s + \ii \chom_s, \chOm_s),
\end{equation*}
with sublattice $H = \langle \sigma_0, E \rangle \subset \bL$, and a mirror equivalence  
\begin{equation*}
\psi^*\DCoh(\cX_{\xi}) \cong \DFuk(\chX_s, \chB_s + \ii \chom_s), 
\end{equation*}
which admits a corresponding homological SYZ fibration $\pi\!: \chX_s \to S^2$ in the sense of Definition \ref{SYZtypeDef}. This induces an inclusion 
\begin{equation*}
\iota\!:\Pic(\cX_{\xi}) \hookrightarrow H_2(\chX_s, \Z) = \bL, 
\end{equation*}
the latter using our fixed identification. We say that \emph{homological mirror symmetry for $\chX_s$ holds compatibly with Hodge-theoretic mirror symmetry} if 
\begin{enumerate}
\item[$(i)$] there is an isomorphism $i\!:\Pic(X) \cong \Pic(\cX_{\xi})$, preserving the ample cones, such that $\iota\circ i$ is the identity under our fixed identification $H_2(\chX, \Z) = \bL$;
\item[$(ii)$] the fibre class of the homological SYZ fibration $\pi\!: \chX_s \to S^2$ is given by $E$;
\item[$(iii)$] the \emph{Gamma property} holds: denoting by $[\cL(\olo_{\cX_{\xi}})] \in H_2(\chX_s, \Z)$ the class represented by the equivalence class of Lagrangian spheres $\cL(\olo_{\cX_{\xi}})$, we have 
\begin{equation*}
\int_X e^{ u \omega - \ii u\hat{\bB}}  = \int_{[\cL(\olo_{\cX_{\xi}})]} \chOm_s + O(s^{-1})   
\end{equation*} 
for $s\gg 0$, where $u$, $s$ are related by a base-change $s = u^{r}$ for some $r > 0$.
\end{enumerate}
\end{definition}
\begin{definition}\label{StructureObject} Suppose that $(X, s(\bB + \ii \omega), \Omega)$ has sublattice $H = \langle \sigma_0, E \rangle \subset \bL$ and that homological mirror symmetry holds compatibly with Hodge-theoretic mirror symmetry in the sense of Definition \ref{SYZtypeProperties} $(i)$, $(ii)$. Then we denote by  
\begin{equation*}
\cL_s \in \DFuk(\chX_s, \chB_s+\ii\chom_s)
\end{equation*} 
the object mirror to the structure sheaf  $\olo_{\cX_{\xi}} \in \Pic(\cX_{\xi})$.
\end{definition}

\begin{thm}\label{MainThm} Fix an \emph{admissible} Hodge-theoretic mirror pair $X, \chX_s$ with sublattice $H$ (see Definition \ref{AdmissDef}), with general K\"ahler class $\omega$ and holomorphic volume form $\Omega$ (i.e. lying away from a countable union of analytic subvarieties). Suppose that homological mirror symmetry for $\chX_s$ holds compatibly with Hodge-theoretic mirror symmetry in the sense of Definition \ref{SYZtypeProperties}. Then, for all $s \gg 0$,
\begin{enumerate}
\item[$(i)$] the object $\cL_s \in \DFuk(\chX_s, \chB_s+\ii\chom_s)$ mirror to $\olo_{\cX_{\xi}}$ is represented by a smooth Lagrangian sphere;
\item[$(ii)$] there exist stability conditions $\chs_s$ on $\DFuk(\chX_s, \chB_s+\ii\chom_s)$ such that, if the object $\cL_s$ is $\chs_s$-stable for $s \gg 0$, $\cL_s$ can be represented by a (unique, smooth) \emph{special} Lagrangian sphere.
\end{enumerate}
By known mirror symmetry results (summarised in Proposition \ref{MirrorProp}), the required properties hold in particular when $X, \chX_s$ are given by quartic surfaces in $\PP^3$ or sextics in $\PP(3,1,1,1)$ and their Batyrev mirrors, provided the following additional conditions hold:
\begin{enumerate}
\item[$(a)$] $\hat{\bB}$ is induced by a class in $\NS(\widehat{\PP})_{\R}$, where $\widehat{\PP}$ denotes $\PP^3$ or a crepant resolution of $\PP(3,1,1,1)$;
\item[$(b)$] the Gross-Wilson normalised holomorphic volume is such that $\Rea \Omega$ is integral and $\chom_s$ is also the pullback of a real Neron-Severi class from the ambient.
\end{enumerate}
\end{thm}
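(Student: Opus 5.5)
Here is the plan, following the outline in Remark \ref{ProofRmk}. There are three steps: stability gives twisted ampleness, Torelli gives a family of K3 surfaces with a good special fibre, and homological mirror symmetry transports a special Lagrangian back to $\chX_s$.

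\textbf{Part $(i)$.} By Definition \ref{SYZtypeDef}, $\cL_s=\cL(\olo_{\cX_{\xi}})$ is a Lagrangian section of the homological SYZ fibration $\pi\colon \chX_s\to S^2$. A section of a fibration over $S^2$ is a smooth embedded sphere, and it is Lagrangian by definition. Its class is the image of the trivial class under $\iota\circ i$, which is $\sigma_0$ by Definition \ref{SYZtypeProperties} $(i)$, $(ii)$. This is consistent with $\sigma_0^2=-2$ and $E.\sigma_0=1$.

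\textbf{Part $(ii)$, construction of $\chs_s$.} I would transport to $\DFuk(\chX_s,\chB_s+\ii\chom_s)$, via the fixed equivalence, the geometric stability conditions $\sigma_{u\omega,u\hat{\bB}}$ on $\DCoh(\cX_\xi)$. These have central charge $Z(\cE)=-\int e^{-(u\hat\bB+\ii u\omega)}\ch(\cE)$, using $\omega,\hat\bB\in\NS(X)_\R\cong\NS(\cX_\xi)_\R$ from admissibility. By the Gamma property, $Z(\olo)=\int_{\sigma_0}\chOm_s+O(s^{-1})$. Hence the phase of $\cL_s$ under $\chs_s$ is asymptotically the phase of its period.

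The key claim is that stability of $\olo$ for $s\gg0$ forces twisted ampleness. Concretely, it should force a class of the form $\omega\pm(\sigma_0+\hat\bB)$-type twist, or the dHYM-type positivity $\omega^2-\hat\bB^2>0$ together with the appropriate sign, to lie in the ample cone. To prove this, I would test stability against destabilising subobjects $\olo(-C)\to\olo$ and quotients $\olo_C$ for curves $C$, in particular $(-2)$-curves. For such a curve, the condition $\phi(\olo(-C))<\phi(\olo)$ expands at leading order in $u$ to a linear inequality in $C.\omega$ and $C.\hat\bB$. Genericity of $\omega$ and $\Omega$ ensures there are no accidental equalities. This step gives the ampleness of a twisted class $\omega_{\hat\bB}$.

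\textbf{Family and deformation.} Using the explicit formulas of Lemma \ref{GrossWilsonLem}, I would interpolate linearly, for $t\in[0,1]$, between $\chOm_s,\chom_s$ and a hyperk\"ahler-rotation-type pair $(\chOm^{(0)},\chom^{(0)})$ in which $\sigma_0$ is of type $(1,1)$ with respect to a rotated complex structure. In that pair $\sigma_0$ is represented by a holomorphic $(-2)$-curve, which is therefore special Lagrangian for the original structure; this is the Thomas--Yau observation. Twisted ampleness is exactly the condition that along the path the triple remains a positive $3$-plane orthogonal to $\sigma_0$ with $\sigma_0$ never becoming orthogonal to the Kähler direction. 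Surjectivity of the period map and the Torelli theorem then give a smooth family $\cZ\to[0,1]$ with $\cZ_1\cong\chX_s$. Along the family $\sigma_0$ stays effective and irreducible in the hyperk\"ahler rotation, since generality excludes other $(-2)$-classes becoming orthogonal. So the sLag sphere deforms smoothly, giving a sLag $\tilde L_t$ for every $t$. Existence is thus purely Hodge-theoretic.

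It remains to show that $\tilde L_1\cong\cL_s$ in the Fukaya category. Both are Lagrangian spheres in class $\sigma_0$. By the uniqueness remark (\cite{ThomasYau}, Section 5.3), it suffices to show that $\cL_s$ is the Fukaya class carrying the sLag. For this I would use the Gamma property together with stability: $\tilde L_1$ is $\chs_s$-stable of the same phase as $\cL_s$, and two stable objects in the same class with the same phase and nonzero morphism are isomorphic. A nonzero morphism can be obtained from the Floer Euler pairing $\chi(\cL_s,\tilde L_1)=-\sigma_0^2=2>0$. This identification is the main obstacle. I would attack it by first proving that $\tilde L_1$ is itself a section of $\pi$, using fibre class $E$ and $E.\sigma_0=1$, and then applying the bijection in Definition \ref{SYZtypeDef}.

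The unconditional cases for quartics and sextics reduce to checking the hypotheses via Proposition \ref{MirrorProp} under conditions $(a)$ and $(b)$.
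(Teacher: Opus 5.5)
There is a genuine gap, and it sits in the step you yourself flag as the main obstacle. That step is the whole content of the theorem, not a technicality.

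For general $\omega$, the class $\PD(\sigma_0)$ on $\chX_s$ carries a unique smooth sLag sphere with no stability hypothesis at all. This is Lemma \ref{SmoothnessLem}; it uses only $\sigma_0^2=-2$, $\chom_s.\sigma_0=\Imm\Omega.\sigma_0=0$ and genericity. Your family step ends at $t=0$ in ``some rotated structure in which $\sigma_0$ is $(1,1)$'', a property the suitable rotation of $\chX_s$ already has. So it proves nothing beyond that lemma, and it gives no anchor point where the sLag is known to be mirror to a line bundle.

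Neither of your proposed identifications closes the gap. Route (b) needs $\tilde L_1$ to be $\chs_s$-stable. That is the converse implication sLag $\Rightarrow$ stability, which is open (the paper says explicitly it cannot prove it), so the Euler-pairing/Schur argument runs in the wrong direction. Route (c) fails too. $E.\sigma_0=1$ is only a homological count, and $\pi$ is merely a Lagrangian fibration, not holomorphic in any structure in which $\tilde L_1$ is. So nothing like positivity of intersections forces $\tilde L_1$ to meet each fibre once. Moreover, (c) uses only homological data and never the stability hypothesis. If it worked it would give $\tilde L_1\cong\cL_s$ unconditionally, which the paper expects to be false in general (Thomas--Yau, Section 6.3, and the remark after the theorem).

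The missing idea is how the paper makes stability enter the identification. The central charge must use the B-field $s\hat{\bB}+\sigma_0$; you dropped $\sigma_0$, and your ``key claim'' never pins down the twisted class. With that choice, Theorem \ref{TwistedPosThm} gives that $\hat{\eta}=\sin(\hat{\theta})(\sigma_0+s\hat{\bB})+\cos(\hat{\theta})s\omega$ is K\"ahler on $(X,\Omega)$. This class is exactly $s$ times the K\"ahler class of the $\hat{\psi}$-rotation of $\chX_s$.

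Hence $(X,\hat{\eta},\Omega)$ and $(\chX^{\hat{\psi}}_K,\hat{\eta},\chOm^{\hat{\psi}}_K)$ lie in one moduli space of K3 surfaces with fixed K\"ahler class. A path in it with $\Imm\Omega_{\cY_t}\equiv\Imm\Omega$ rotates back to a family $\cZ_t$ with \emph{constant} symplectic class $\omega_{\cZ_t}=\Imm\Omega=\chom_s$. Your linear interpolation moves $\chom$, so the Fukaya categories along it do not form a local system.

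At the end $\cZ_0$, whose $\hat{\psi}$-rotation is $X$ itself, the sLag is the holomorphic curve $D\subset X$ in class $\sigma_0$. It is identified with a section of the SYZ fibration and is therefore mirror to a line bundle. Flatness plus McLean rigidity show that the locus where the mirror of $L_t$ is a line bundle is open and closed. So the mirror of $L_1$ is a line bundle of class $\PD(\sigma_0)$, which is $\olo_{\cX_{\xi}}$ by Lemma \ref{GammaLemma}. Note that this lemma derives $[\cL_s]=\PD(\sigma_0)$ from the Gamma property (iii), not from (i), (ii) as you claim in Part (i).
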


Note that, according to the discussion in \cite{ThomasYau}, Section 6.3, there might not be a special Lagrangian representative of the class $\cL_s \in \DFuk(\chX_s, \chom_s)$. From our current viewpoint, although there exists a (generically smooth) unique special Lagrangian sphere in the homology class $[\cL_s]$, it might not be equivalent in $\DFuk(\chX_s, \chom_s)$ to a section of the homological SYZ fibration (i.e. morally ``not Hamiltonian isotopic to such a section"). So our main result shows that the ``large scaling"  Bridgeland stability of the Fukaya class represented by a Lagrangian sphere in $\cL_s$ is a sufficient condition for the existence of a \emph{special} Lagrangian sphere in the same class.

Let us summarise part of what is currently known concerning the properties of K3 mirror symmetry spelled out in Definition \ref{SYZtypeDef}, thanks the deep results of Sheridan-Smith \cite{SheridanSmithGP} and Hacking-Keating \cite{HackKeatingK3} building on the foundational work of Seidel \cite{SeidelQuartic}, Ganatra-Pardon-Shende \cite{GaPaSh_Liouville}, Lekili-Ueda \cite{LekiliUeda} and several other authors. 

\begin{prop}\label{MirrorProp} The following general properties of K3 mirror symmetry are already known or conjectured to hold.
\begin{enumerate}
\item[$(i)$] Suppose $(Z, \omega_Z, \Omega_Z)$ is a projective polarised K3 surface (i.e. $\omega_Z$ is integral). Then there exist a projective K3 surface $\cY$ over $\C\pow{q}$, given by a formal smoothing of a type III degeneration, with generic fibre $\cY_{\xi}$, satisfying 
\begin{equation*}
\DFuk(Z, \omega_Z) \cong \psi^*\DCoh(\cY_{\xi}),
\end{equation*} 
and $Z$ admits a homological SYZ fibration $\pi\!: Z \to S^2$ (see Hacking-Keating \cite{HackKeatingK3}, Theorem 1.2, Section 1.4 and Proposition 4.23). This equivalence satisfies $(i)$, $(ii)$ in Definition \ref{SYZtypeProperties} (see \cite{HackKeatingK3}, Section 1.3). 
\item[$(ii)$] In the special case when the pair $(Z, \cY_{\xi})$ is of Greene-Plesser type (i.e. both $Z$ and $\cY_{\xi}$ are resolutions of hypersurfaces in quotients of weighted projective spaces), this equivalence is compatible with the homological mirror symmetry result of Sheridan-Smith \cite{SheridanSmithGP} (see the discussion following the statement of Theorem 1.2 in \cite{HackKeatingK3}), and holds under the more general assumption that $\omega_Z$ is the pullback of a real Neron-Severi class from the ambient.
\item[$(iii)$]  The Gamma property in the sense of Definition \ref{SYZtypeDef} $(ii)$ is conjectured to always hold, and it is known when $(Z, \cY_{\xi})$ is  of Greene-Plesser type and $\hat{\bB}$ is a real Neron-Severi class induces by the ambient (by the results of Sheridan-Smith \cite{SheridanSmithGP}, Section 1.7, combined with those of Abouzaid-Ganatra-Iritani-Sheridan \cite{AGIS}, Theorem C).
\end{enumerate}
Thus, compatibility with Hodge-theoretic mirror symmetry (Definition \ref{SYZtypeProperties}) is conjectured to always hold and known to hold at least when the pair $(Z, \cY_{\xi})$ is of Greene-Plesser type, namely:
\begin{enumerate}
\item a quartic surface in $\PP^3$ and its Batyrev mirror,
\item a sextic in $\PP(3,1,1,1)$ and its Batyrev mirror.
\end{enumerate}  
\end{prop}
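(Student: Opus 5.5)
The plan is to treat the proposition as a compilation of existing results and to check, item by item, that the hypotheses of the cited theorems match the setting of Definitions \ref{SYZtypeDef} and \ref{SYZtypeProperties}. Nothing new is constructed. The work consists of translating conventions (lattice identifications, the normalisation $E.\Rea\Omega = 1$ of Remark \ref{NormalisationRmk}, the lift $\hat{\bB}$ of Lemma \ref{GrossWilsonLem}) into those used by Hacking-Keating and Sheridan-Smith.

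For $(i)$, I would quote Hacking-Keating \cite{HackKeatingK3}, Theorem 1.2. For an integrally polarised $(Z,\omega_Z)$ it produces a type III degeneration $\cY \to \Spec\C\pow{q}$ and an equivalence $\DFuk(Z,\omega_Z) \cong \psi^*\DCoh(\cY_\xi)$. Their Proposition 4.23 and Theorem 4.2 construct the Lagrangian torus fibration $\pi\!: Z \to S^2$ and match line bundles on $\cY_\xi$ with Lagrangian sections up to fibre-preserving Hamiltonian isotopy. This is exactly a homological SYZ fibration in the sense of Definition \ref{SYZtypeDef}.

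Properties $(i)$ and $(ii)$ of Definition \ref{SYZtypeProperties} would then be read off from \cite{HackKeatingK3}, Section 1.3. There the induced map $\Pic(\cY_\xi) \to H_2(Z,\Z)$ is identified with the Dolgachev/Gross-Wilson lattice embedding, the torus fibre has class $E$, and ample cones correspond. I would verify that their lattice $E^\perp/E$ agrees with ours, so that $\iota\circ i$ is the identity.

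For $(ii)$, I would invoke the remark after Theorem 1.2 in \cite{HackKeatingK3}: in the Greene-Plesser case their equivalence agrees with Sheridan-Smith \cite{SheridanSmithGP}. The Sheridan-Smith result allows any Kähler class pulled back from an ambient real Néron-Severi class, and this gives the extension beyond integral $\omega_Z$.

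For $(iii)$, the Gamma property compares the central charge of $\cL(\olo)$ with $\int_X e^{u\omega - \ii u\hat{\bB}}$. I would combine the Sheridan-Smith computation of the mirror map and periods (\cite{SheridanSmithGP}, Section 1.7) with the Gamma conjecture results of \cite{AGIS}, Theorem C. This requires $\hat{\bB}$ to be an ambient Néron-Severi class. The $O(s^{-1})$ error then comes from the Gamma class and the instanton corrections, which are suppressed at large volume after the base change $s=u^r$.

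The main obstacle is this last step: matching the normalisations. One must show that the Gross-Wilson lift $\hat{\bB}$ and the scaling $s$ correspond to the complexified Kähler parameter in \cite{AGIS}. One must also check that the Gamma-class correction contributes only to the $O(s^{-1})$ term. I would handle this by expanding both sides in powers of $s$ and comparing leading terms.

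Finally, the quartic and $\PP(3,1,1,1)$ sextic examples are exactly the Greene-Plesser cases treated by Sheridan-Smith, which gives the concluding list.
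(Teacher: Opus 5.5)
Your route is the paper's. The proposition is a compilation of known results, and you invoke the same inputs item by item: Hacking--Keating for $(i)$, their compatibility remark with Sheridan--Smith for $(ii)$, and Sheridan--Smith together with \cite{AGIS}, Theorem C for $(iii)$.

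\textbf{The missing link in $(iii)$.} Step $(iii)$ lacks the one substantive input that makes the citation work. Theorem C of \cite{AGIS} (Theorem \ref{GammaThm}) computes periods of the explicit cycles $C^{(\nu)}_s \subset F_s$. These are the positive real locus $C^+_s$ and its parallel transports in the phases $\theta$. By itself the theorem says nothing about the Lagrangian section $\cL(\olo_{\cX_{\xi}})$ produced by the mirror equivalence, and that object is what Definition \ref{SYZtypeProperties} $(iii)$ concerns.

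The bridge is the statement the paper records, attributed to \cite{HackKeatingK3} and \cite{SheridanSmithGP}, right after Theorem \ref{GammaThm}. Under the Hacking--Keating/Sheridan--Smith equivalence, the section $\cL\bigl(\sum_q \nu_q D_q\bigr)$ of the homological SYZ fibration is cohomologous to $C^{(\nu)}_s$. In particular $[\cL(\olo_{\cX_{\xi}})] = [C^+_s]$.

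\cite{AGIS} give only a conjectural argument for this compatibility (their Remark 1.3). It is available when Hacking--Keating and Sheridan--Smith apply simultaneously, i.e.\ for quotients of weighted projective spaces. That is where the restriction to the quartic in $\PP^3$ and the sextic in $\PP(3,1,1,1)$ actually enters. Saying you will ``combine the Sheridan--Smith mirror map computation with Theorem C'' does not supply this identification; you need to state it and cite it explicitly.

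\textbf{The normalisation step.} The issue you single out as the main obstacle is harmless. For a K3 surface, $\widehat{\Gamma}_X = 1 + \zeta(2)\ch_2(TX) = 1 - 4\pi^2[\mathrm{pt}]$. After the usual $(2\pi\ii)^{-\deg/2}$ rescaling this becomes $\sqrt{\mathrm{td}_X} = 1 + [\mathrm{pt}]$. One then checks that $-\int_X e^{-s(\hat{\bB}+\ii\omega)}\sqrt{\mathrm{td}_X} = s^2\frac{(\omega - \ii\hat{\bB})^2}{2} - 1$, which equals $s\,\chOm_s.\sigma_0$ under the normalisation $E.\Rea\Omega = 1$. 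So the Gamma-class correction is exactly the $-1/s$ term appearing in the proof of Lemma \ref{GammaLemma}, and it is absorbed into the $O(s^{-1})$ error after the base change $s = u^2$. Your expansion in $s$ would therefore go through; the real content of $(iii)$ is the cycle identification above.
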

\begin{rmk} In work currently in progress (see \cite{KeatingGS25}) Hacking and Keating establish homological mirror symmetry for K3 surfaces in the form of Proposition \ref{MirrorProp} $(i)$ for any K\"ahler form $\omega_Z$ such that the lattice $[\omega_Z]^{\perp} \cap H^2(Z,\Z)$ has a positive sublattice of rank $2$ and contains a copy of the hyperbolic plane. 
\end{rmk}
Let us recall some more details concerning Proposition \ref{MirrorProp}. We fix the data of
\begin{enumerate}
\item[$\bullet$] an algebraic torus $\T_{\C}$, with character and cocharacter lattices $P$ and $P^{\vee}$, of rank $d+1$;
\item[$\bullet$] a reflexive integral polytope $\Delta^{\vee} \subset P^{\vee}_{\R}$ containing the origin;
\item[$\bullet$] a fan $\Sigma$ in $P^{\vee}_{\R}$ giving a star-shaped triangulation of $\Delta^{\vee}$.
\end{enumerate}
Together, these determine a toric variety $\bT_{\Sigma}$ with toric boundary divisor $\del\bT_{\Sigma}$. Similarly, the polar dual $\Delta$ determines a toric variety $\bT^{\vee}_{\Sigma}$.

Fix a function $\theta\!: \Delta^{\vee} \to \R$, denoted by $q \mapsto \theta_q$. Following \cite{AGIS}, Section 1.3, we introduce a polynomial function on  $\T^{\vee}_{\C} \cong (\C^*)^{d+1}$ given by
\begin{equation*}
W_{s, \theta} =  \sum_{q \in \Delta^{\vee}} e^{\ii \theta_q} s^{\alpha(q)} z^q. 
\end{equation*}
The corresponding hypersurface is
\begin{equation*}
F_{s, \theta} := F_{W_s, \theta} := \{W_{s, \theta} = c \} \subset \T^{\vee}_{\C}. 
\end{equation*}

We denote this by $F_s$ when $\theta = 0$. Let $\widehat{\bT}^{\vee}_{\Sigma}$ be a partial crepant resolution of $\bT^{\vee}_{\Sigma}$ which has at worst quotient singularities. Then the hypersurface $F_s \subset \T^{\vee}_{\C}$ compactifies to a quasi-smooth Calabi-Yau hypersurface $\widehat{F}_s \subset \widehat{\bT}^{\vee}_{\Sigma}$.   

We fix the residue volume form
\begin{equation*}
\Omega_{W_s} := \frac{d \log z_0 \wedge \cdots \wedge d\log z_{d}}{d W_s(z)}\big|_{F_{W_s}}. 
\end{equation*}
Let 
\begin{equation*}
C^+_s := F_{s} \cap \T^{\vee}_{\R_+} \subset F_s
\end{equation*}
denote the positive real locus. According to \cite{AGIS}, Section 3.1, $C^+_s$ is homeomorphic to the sphere $S^d$ for sufficiently small $s > 0$.

Fix weights $\nu\!: \Delta^{\vee} \to \Z$, denoted by $q \mapsto \nu_q$. Define a corresponding cycle 
\begin{equation*}
C^{(\nu)}_s \subset F_s
\end{equation*} 
as the parallel transport of $C^+_s \subset F_s$ as we vary $\theta$ continuously from $\theta = 0$ to $\theta = 2\pi \nu$.

Let $H$ denote a quasismooth anticanonical divisor in $\bT^{\vee}_{\Sigma}$. Let 
\begin{equation*}
\widehat{\Gamma}_H := \exp\left(\sum_{k \geq 2}(-1)^k \zeta(k)(k-1)!\ch_k(TH)\right)
\end{equation*}
denote its Gamma class. 
\begin{thm}[\cite{AGIS}, Theorem C]\label{GammaThm} We have
\begin{align*}
\int_{C^{(\nu)}_s \subset F_{s}} \Omega_s = \int_H s^{-\omega} \cdot \widehat{\Gamma}_H\cdot e^{-2\pi \ii \sum_{q \in \del\Delta^{\vee}\cap P^{\vee}}\nu_q D_q} + O(s^{\eps}) 
\end{align*}
as $s\to 0$  for some $\eps > 0$.
\end{thm}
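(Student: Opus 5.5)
We would prove Theorem \ref{GammaThm} in two stages: first the untwisted case $\nu = 0$, then the general case by tracking monodromy in the phases $\theta$. For $\nu=0$ the cycle is the positive real locus $C^+_s$. The plan is to turn the period $\int_{C^+_s}\Omega_s$ into an integral over the positive real torus $\T^{\vee}_{\R_+}\cong \R_+^{d+1}$, and then evaluate that integral asymptotically in $s\to 0$, where the Gamma class should appear.

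\textbf{Step 1 (reduction to a Laplace integral).} By the residue definition of $\Omega_{W_s}$, the family of periods $c \mapsto \int_{C^+_{s}} \Omega_{W_s}$ over the level sets $\{W_s = c\}\cap\T^{\vee}_{\R_+}$ integrates along $c$ to the torus integral
\begin{equation*}
\mathcal{I}(s,\lambda)=\int_{\R_+^{d+1}} e^{-\lambda W_s(z)}\, d\log z_0\wedge\cdots\wedge d\log z_d .
\end{equation*}
We would then recover the fibre period at $c$ from $\mathcal{I}(s,\lambda)$ by inverse Laplace transform in $\lambda$. By the toric mirror theorem (Givental, Hori--Vafa), $\mathcal{I}$ is a component of the $I$-function of $H$, expanded in the Kähler variables $s^{\alpha(q)}$ after the mirror map. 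Its leading behaviour as $s\to 0$ is Iritani's $\widehat\Gamma$-integral structure: the positive real cycle corresponds to the structure sheaf. This would give $\int_H s^{-\omega}\widehat\Gamma_H$. Equivalently, one can compute each coefficient by a Mellin--Barnes integral, where products of $\Gamma(1+D_q)$ assemble, via the adjunction sequence for $TH$, into $\widehat\Gamma_H$.

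\textbf{Step 2 (twisting by $\nu$).} Varying $\theta_q$ from $0$ to $2\pi\nu_q$ is a loop in the parameter space of $W_{s,\theta}$. On the A-model side its monodromy is tensoring by $\olo(\sum_q \nu_q D_q)$, which multiplies the $\widehat\Gamma$-integral by $e^{-2\pi \ii\sum\nu_q D_q}$. Concretely, substituting $s^{\alpha(q)}\mapsto e^{\ii\theta_q}s^{\alpha(q)}$ shifts $\log s^{\omega}$ by $-\ii\sum\theta_q D_q$ in the asymptotic expansion. Since the leading term is analytic in these parameters, continuation along the path gives exactly the stated twist factor. Continuity of the error term along the path is needed to keep the $O(s^\eps)$ bound.

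\textbf{Main obstacle.} The hard part is making the asymptotics rigorous with the error $O(s^{\eps})$, rather than only as formal power series identities. This requires a tropical localisation argument: as $s\to 0$, $C^+_s$ degenerates to the boundary of the polytope. The integral then localises to contributions near the faces of the tropical hypersurface given by $\alpha$, and the mirror-map corrections are exponentially small in $\log s$ relative to the leading term. We would first try to bound the difference between $\mathcal{I}$ and its Mellin--Barnes leading residue by shifting contours past the next poles. That shift yields a gain of $s^{\eps}$, where $\eps$ is the smallest positive gap in the values of $\alpha$ beyond the convexity requirement.
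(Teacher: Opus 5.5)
The paper gives no proof of this statement; it is imported from \cite{AGIS}, where the argument is tropical. There the positive real sphere $C^+_s$ is compared, in logarithmic coordinates, with its tropical limit. The polynomial part in $\log s$ comes from the limiting polytope determined by $\alpha$, and the $\zeta(k)$-terms of $\widehat{\Gamma}_H$ appear as error terms of tropicalisation, without any mirror theorem.

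Your route is genuinely different and is viable in outline. It is the $\widehat{\Gamma}$-integral-structure/quantum-Lefschetz route through the oscillatory integral of the toric Landau--Ginzburg model. It needs substantial hypergeometric input: Iritani's identification of the positive real thimble with the structure sheaf, or an equivalent multivariable Mellin--Barnes evaluation. No Gromov--Witten theory is really needed, since only the hypergeometric $I$-series enters.

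In return it gives more. Near the large complex structure limit you get an exact expression for $\int_{C^+_s}\Omega_s$: the leading term plus a convergent series in $Q^d=\prod_q a_q^{D_q\cdot d}$, with $a_q=e^{\ii\theta_q}s^{\alpha(q)}$ and $d$ ranging over nonzero effective curve classes. With that in hand, the $O(s^{\eps})$ bound (any $\eps<\min_d\omega\cdot d$) is automatic. Your Step 2 is automatic too: along the path in $\theta$ the moduli $|Q^d|$ do not change, and only the $\log a_q$ pick up $2\pi\ii\nu_q$. So your contour shifting, and especially the tropical localisation in your last paragraph, are not steps of your argument; the latter is essentially the other proof. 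The tropical route, for its part, is elementary and shows geometrically where the zeta values come from, which is the SYZ link stressed in \cite{AGIS}.

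Two points in your Step 1 need correcting.

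First, $\mathcal{I}(s,\lambda)$ is an oscillatory integral for the Hori--Vafa mirror of the ambient $\bT_\Sigma$, not a component of the $I$-function of $H$. Its leading term is, schematically, $\int_{\bT_\Sigma}\lambda^{-c_1(\bT_\Sigma)}s^{-\omega}\widehat{\Gamma}_{\bT_\Sigma}$. Adjunction tells you that $\widehat{\Gamma}_H=\widehat{\Gamma}_{\bT_\Sigma}|_H/\Gamma(1+[H])|_H$, with the Euler-constant terms cancelling because $c_1(H)=0$. However, the denominator and the restriction to $H$ must actually be produced by the inverse Laplace transform. Use $\int_0^{\infty}e^{-\lambda c}c^{x}\,dc=\Gamma(1+x)\lambda^{-1-x}$ with $x=c_1(\bT_\Sigma)=[H]$. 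Then the period of $\{W_s=c\}\cap\T^{\vee}_{\R_+}$ is, to leading order, $\partial_c\int_{\bT_\Sigma}c^{[H]}s^{-\omega}\widehat{\Gamma}_{\bT_\Sigma}/\Gamma(1+[H])$, and $\partial_c$ brings down the factor $[H]$.

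Second, this inversion cannot be applied to the $s\to 0$ expansion of $\mathcal{I}$. Since $\lambda$ enters only through $\lambda^{c_1\cdot d}s^{\omega\cdot d}$, that expansion is not uniform along a Bromwich contour. Invert inside the exact Mellin--Barnes representation, or at the level of the GKZ system, and only then let $s\to 0$.
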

There is a general conjectural argument, provided in \cite{AGIS}, Remark 1.3, which predicts that homological mirror symmetry holds compatibly with Theorem \ref{GammaThm}. This is now fully worked out at least in the special case when \emph{$\bT_{\Sigma}$, $\bT^{\vee}_{\Sigma}$ are quotients of weighted projective spaces}, so the results of Sheridan-Smith on general Greene-Plesser mirrors \cite{SheridanSmithGP} apply at the same time as the results of Hacking-Keating \cite{HackKeatingK3}. 
\begin{thm}[\cite{HackKeatingK3}, \cite{SheridanSmithGP}] Suppose $X := H  \subset \bT_{\Sigma}$ is a K3 surface arising in the construction above, namely, a quartic surface in $\PP^3$ and its Batyrev mirror, as well as a sextic in $\PP(3,1,1,1)$ and its Batyrev mirror. Then, 
\begin{enumerate}
\item[$(i)$] There is a homological mirror symmetry equivalence
\begin{equation*}
\psi^*\DCoh(\cX_{\xi}) \cong \DFuk(\chX_s := \widehat{F}_s),
\end{equation*}
satisfying the properties $(i)$, $(ii)$ in Definition \ref{SYZtypeProperties},
\item[$(ii)$] the corresponding homological SYZ fibration $\pi\!: \chX_s = F_s \to S^2$ is such that the cycle $\cL\left(\sum_{q \in \del\Delta^{\vee}\cap P^{\vee}}\nu_q D_q\right)$ is cohomologous to $C^{(\nu)}_s$. 
\end{enumerate}
\end{thm}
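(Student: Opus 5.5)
The plan is to assemble the statement from the cited results rather than prove anything new. The work splits into two parts: part $(i)$ is a matter of bookkeeping across \cite{HackKeatingK3} and \cite{SheridanSmithGP}, and part $(ii)$ is an identification of cycles, which I organise around the monodromy of the family $F_{s,\theta}$.

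For $(i)$, I would start from Proposition \ref{MirrorProp} $(i)$, that is \cite{HackKeatingK3}, Theorem 1.2 and Proposition 4.23, applied to $Z = \chX_s = \widehat{F}_s$. When $\chom_s$ is integral this gives a type III formal family $\cX \to \Spec \C\pow{q}$ with $\psi^*\DCoh(\cX_{\xi}) \cong \DFuk(\chX_s)$, together with a homological SYZ fibration $\pi\colon \chX_s \to S^2$. Properties $(i)$ and $(ii)$ of Definition \ref{SYZtypeProperties} should then follow from \cite{HackKeatingK3}, Section 1.3:
\begin{enumerate}
\item[$\bullet$] $\Pic(\cX_{\xi})$ is identified with $\Pic(X)$, compatibly with the ample cones and with the lattice embedding into $\bL$;
\item[$\bullet$] the class of the SYZ fibre is the isotropic class $E$.
\end{enumerate}
To pass from integral $\chom_s$ to $\chom_s$ pulled back from a real Neron--Severi class of the ambient space, I would use the Greene--Plesser case, Proposition \ref{MirrorProp} $(ii)$. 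There the Hacking--Keating equivalence agrees with the Sheridan--Smith equivalence (the discussion following \cite{HackKeatingK3}, Theorem 1.2), and \cite{SheridanSmithGP} allows real ambient Kähler classes. For quartics in $\PP^3$ and sextics in $\PP(3,1,1,1)$ with their Batyrev mirrors, both sides are quotients of weighted projective hypersurfaces, so all hypotheses apply.

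For $(ii)$, I would argue in two steps.
\begin{enumerate}
\item[$\bullet$] First I would show that the positive real locus $C^+_s$, which is a sphere by \cite{AGIS}, Section 3.1, is a Lagrangian section of $\pi$ representing $\cL(\olo_{\cX_{\xi}})$. Heuristically, the Hacking--Keating fibration is built from the tropical (amoeba-type) degeneration of $F_s$ as $s \to 0$. The real positive locus maps homeomorphically onto the base under the logarithm/moment map, and the section mirror to $\olo$ is precisely the zero section in this torus-fibration picture.
\item[$\bullet$] Second, the parallel transport $\theta\colon 0 \to 2\pi\nu$ is a symplectic monodromy of the family $F_{s,\theta}$. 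Under the Sheridan--Smith equivalence, the monodromy around the large complex structure limit in the $q$-th direction corresponds to the autoequivalence $-\otimes \olo(D_q)$, a Seidel-type statement built into the construction of \cite{SheridanSmithGP}. Hence $C^{(\nu)}_s$ represents the object mirror to $\olo\bigl(\sum_q \nu_q D_q\bigr)$. Since the bijection of Definition \ref{SYZtypeDef} is compatible with the fixed equivalence, this object is $\cL\bigl(\sum_q \nu_q D_q\bigr)$, and in particular the two are homologous.
\end{enumerate}

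I expect the main obstacle to be the first step of $(ii)$: matching $C^+_s$ with the specific section $\cL(\olo)$ in Hacking--Keating's model, and checking that their monodromy convention agrees with the tensoring convention of Sheridan--Smith. This could introduce a sign, giving $-\nu$, or a shift by the classes of exceptional curves. My first approach would be to compare both constructions on the common toric degeneration, where each section is explicit. Failing that, I would check that the homology classes agree using the period pairing with $\chOm_s$ and Theorem \ref{GammaThm}, since only the cohomology class is asserted.
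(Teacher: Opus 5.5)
Your route is the same as the paper's. The paper gives no argument of its own for this statement: it quotes \cite{HackKeatingK3} and \cite{SheridanSmithGP}, and for $(ii)$ it relies on the mechanism of \cite{AGIS}, Remark 1.3 ($C^+_s$ is mirror to $\olo$, and the monodromy $\theta_q \mapsto \theta_q + 2\pi$ acts as $-\otimes\olo(D_q)$), which it says is fully worked out in the Greene--Plesser case because the Sheridan--Smith and Hacking--Keating equivalences are compatible. Your bookkeeping for $(i)$ and the second step of $(ii)$ reproduce exactly this.

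Where your proposal does not hold up is the first step of $(ii)$ and its fallback. The identification of $C^+_s$ with $\cL(\olo_{\cX_{\xi}})$ cannot be obtained from the amoeba heuristic, which is not a statement about the Hacking--Keating fibration. Note also that you only need $C^+_s \cong \cL(\olo_{\cX_{\xi}})$ as objects, not that $C^+_s$ is a section of $\pi$. This identification has to be imported from \cite{SheridanSmithGP}, together with the compatibility of the two equivalences, as the paper implicitly does.

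Your period-matching fallback cannot substitute for it, for two reasons.
\begin{enumerate}
\item[$\bullet$] The hypersurfaces $F_{s,\theta}$ move in a very small space of complex structures: one-dimensional for the mirror quartic, whose generic member has Picard rank $19$. So $\int_C \chOm_s$, even as an exact function of $s$, determines $[C]$ only modulo the generic N\'eron--Severi lattice of $\widehat{F}_s$. In particular it cannot see the shifts by exceptional-curve classes that you are worried about, since those classes are algebraic on every fibre.
\item[$\bullet$] To compare periods you would need $\int_{\cL(E)} \chOm_s$ independently, that is, the Gamma property for $\cL(E)$. But in the paper that property (Proposition \ref{MirrorProp} $(iii)$) is deduced from part $(ii)$ combined with Theorem \ref{GammaThm}. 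So the fallback is circular.
\end{enumerate}
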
 
\section{Proof of the main result}\label{MainSec} 
The Section is devoted to proving Theorem \ref{MainThm}. This requires several steps. For each step we emphasise the role of the general properties of mirror symmetry given in Definition \ref{SYZtypeProperties}, which we always assume to hold throughout. 

We fix throughout a Hodge-theoretic mirror pair
\begin{equation*}
(X, s(\bB + \ii \omega), \Omega),\,(\chX_s, \chB_s + \ii \chom_s, \chOm_s),
\end{equation*}
with sublattice $H = \langle \sigma_0, E \rangle \subset \bL$, which is admissible in the sense of Definition \ref{AdmissDef}.

The first step is an application of the Gamma property in order to determine the cohomology class of the Lagrangian $\cL_s$ mirror to $\olo_{\cX_{\xi}}$ (see Definition \ref{StructureObject}).
\begin{lemma}\label{GammaLemma} Suppose that homological mirror symmetry for $\chX_s$ holds compatibly with Hodge-theoretic mirror symmetry in the sense of Definition \ref{SYZtypeProperties}. Then for $s \gg 0$ we have
\begin{equation*}
[\cL_s] = \pd(\sigma_0) \in H_2(\chX_s, \Z).
\end{equation*}
\end{lemma}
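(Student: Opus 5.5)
The plan is to compute both sides of the Gamma property (Definition \ref{SYZtypeProperties} $(iii)$) explicitly and compare them. On the $B$-side, $X$ is a surface, so only the degree-$2$ part of the exponential survives:
\begin{equation*}
\int_X e^{u\omega - \ii u \hat{\bB}} = \frac{u^2}{2}(\omega - \ii\hat{\bB})^2 = \frac{u^2}{2}\left(\omega^2 - \hat{\bB}^2\right) - \ii u^2\, \omega.\hat{\bB}.
\end{equation*}
On the $A$-side, the period over a class $\gamma \in H_2(\chX_s,\Z)=\bL$ is the intersection $\gamma.\chOm_s$. For this I will use the explicit form of $\chOm_s$ given by Lemma \ref{GrossWilsonLem}, applied with $(\omega,\hat{\bB})$ replaced by $(s\omega, s\hat{\bB})$ and normalised as in Remark \ref{NormalisationRmk}:
\begin{equation*}
\chOm_s = \sigma_0 + s\hat{\bB} + \ii s\omega + \left(\tfrac{s^2(\omega^2 - \hat{\bB}^2)}{2} + 1 - \ii s\,\omega.\sigma_0 - \ii s^2\,\omega.\hat{\bB}\right)E .
\end{equation*}

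As a first check, take $\gamma = \pd(\sigma_0)$. Using $\sigma_0^2=-2$, $\sigma_0.E=1$ and $\sigma_0.\hat{\bB}=0$, the terms linear in $s$ cancel, leaving
\begin{equation*}
\sigma_0.\chOm_s = \frac{s^2}{2}(\omega - \ii\hat{\bB})^2 - 1 .
\end{equation*}
This agrees with the $B$-side after the base change $s=u$ (so $r=1$), up to a bounded constant. I will treat this constant as absorbed in the lower-order terms, or check that it corresponds to the Gamma-class correction $\chi/24$-type term that Definition \ref{SYZtypeProperties} $(iii)$ implicitly allows. I need to verify which convention is intended here.

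The real content is uniqueness. Write $\gamma = [\cL_s]$. The pairing $\gamma.\chOm_s$ is a polynomial of degree at most $2$ in $s$. The $B$-side is homogeneous of degree $2$ in $u = s^{1/r}$, and the two must agree up to $O(s^{-1})$ for all $s\gg0$. Since $(\omega-\ii\hat{\bB})^2 \neq 0$ for general $\omega$ (its real part $\omega^2 - \hat{\bB}^2$ is nonzero generically), comparing growth rates forces $r=1$. Matching coefficients then gives:
\begin{itemize}
\item from $s^2$: $\gamma.E = 1$;
\item from $s^1$: $\gamma.\hat{\bB} = 0$ and $\gamma.\omega = \gamma.E\,(\omega.\sigma_0) = \omega.\sigma_0$;
\item from $s^0$: $\gamma.\sigma_0 + \gamma.E = -1$, hence $\gamma.\sigma_0 = -2$.
\end{itemize}
Thus $\delta := \gamma - \sigma_0$ satisfies $\delta.E = \delta.\sigma_0 = \delta.\hat{\bB} = \delta.\omega = 0$.

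The remaining step, which I expect to be the main obstacle, is to show $\delta = 0$. I would use three further inputs:
\begin{itemize}
\item $\cL_s$ is a Lagrangian sphere (a section of the homological SYZ fibration $\cL(\olo_{\cX_\xi})$), so $\gamma^2=-2$ and $\gamma.\chom_s = 0$;
\item the genericity hypothesis on $\omega$ and $\Omega$ (away from countably many analytic subvarieties), which ensures that no nonzero integral class is orthogonal to all of $E,\sigma_0,\hat{\bB},\omega,\Imm\Omega$ beyond what is forced by the admissibility constraints;
\item Definition \ref{SYZtypeProperties} $(i)$, which places $\gamma = \iota(i(\olo_X))$ in the image of $\Pic$.
\end{itemize}
From $\gamma.\chom_s=0$ and the formula for $\chom$ in Lemma \ref{GrossWilsonLem}, I get $\delta.\Imm\Omega = 0$. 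From $\gamma^2=-2$ I get $\delta^2 = -2\delta.\sigma_0 = 0$. Together these should pin $\delta$ into a lattice that is negative definite (or at least contains no isotropic integral vectors other than multiples of $E$) for general parameters. The possibility $\delta \in \Z E$ is then excluded by $\delta.\sigma_0 = 0$, since $E.\sigma_0=1$. If this lattice argument proves delicate, my fallback is to use the fact that the equality holds for every $s\gg0$ together with openness in $\omega$: this makes the coefficient identities hold on an open set of $\omega$, which forces $\delta$ to pair trivially with all of $\NS(X)_\R$, and that is incompatible with $\delta \in \Pic$ unless $\delta=0$.
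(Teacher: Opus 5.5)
\textbf{The gap: the normalisation.} It sits in the one step that makes up the paper's entire proof. Lemma \ref{GrossWilsonLem} presupposes $\omega^2=(\Rea\Omega)^2$. For the triple $(X,s(\bB+\ii\omega),\Omega)$ the K\"ahler class is $s\omega$, so by Remark \ref{NormalisationRmk} the holomorphic form must be rescaled by $s$. The prefactor is therefore $(sE.\Rea\Omega)^{-1}$, as in the paper.

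You kept $(E.\Rea\Omega)^{-1}=1$ fixed instead. This is inconsistent: with your prefactor $\chom_s^2=(\Imm\Omega)^2=\omega^2$, while $(\Rea\chOm_s)^2=s^2\omega^2$. With the correct prefactor your own computation reads
\begin{equation*}
\sigma_0.\chOm_s=\frac{s}{2}(\omega-\ii\hat{\bB})^2-\frac{1}{s}.
\end{equation*}
So $\PD(\sigma_0)$ satisfies Definition \ref{SYZtypeProperties} $(iii)$ literally, with $s=u^2$ (so $r=2$, not $r=1$), and the constant becomes part of the $O(s^{-1})$ error. The paper's proof is exactly this check and nothing more. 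In your normalisation the discrepancy is the constant $-1$, which is $O(1)$. It is the contribution of the point-class entry of $v(\olo_X)=(1,0,1)$, i.e.\ of $\sqrt{\operatorname{td}_X}$. Absorbing it changes hypothesis $(iii)$ rather than verifying it.

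\textbf{The uniqueness step.} This addresses something the paper never does: it never shows that the identity singles out $\PD(\sigma_0)$. But as written it inherits the error. With the correct prefactor, $\gamma.\sigma_0+\gamma.E$ appears only at order $s^{-1}$. So the Gamma identity yields just $\gamma.E=1$, $\gamma.\hat{\bB}=0$ and $\gamma.\omega=\sigma_0.\omega$. Your $s^0$ equation is matched against the constant you inserted by hand; against the literal identity in your normalisation it would give $\gamma.\sigma_0=-1$, which is incompatible with $\gamma=\sigma_0$. Hence $\delta.\sigma_0=0$ and $\delta^2=0$ are unavailable, and so is your way of excluding $\delta\in\Z E$. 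Also, $\omega$ is only general inside $\NS(X)_{\R}\cap E^{\perp}$, so $\delta.\omega=0$ does not make $\delta$ orthogonal to $\NS(X)$.

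The argument can be closed as follows.
\begin{enumerate}
\item[(1)] Write $E=e_N+e_T$ with $e_N\in\NS(X)_{\mathbb{Q}}$ and $e_T\perp\NS(X)$. Since $e_N.\omega=0$ and $e_N.\sigma_0=1$, the Hodge index theorem gives $e_N^2<0$, so $e_T^2=-e_N^2>0$.
\item[(2)] For general $\omega$, the condition $\delta.\omega=0$ puts the $\NS$-component of $\delta$ in $\mathbb{Q}e_N$.
\item[(3)] For general $\Omega$ (recall $\Imm\Omega\perp\NS(X)+\Z E$), the condition $\delta.\Imm\Omega=0$ puts the transcendental component of $\delta$ in $\mathbb{Q}e_T$.
\item[(4)] Then $\delta.E=0$ forces $\delta=cE$, and $\gamma^2=(\sigma_0+cE)^2=-2+2c=-2$ forces $c=0$.
\end{enumerate}
This uses the genericity assumption of Theorem \ref{MainThm}, which is not stated in the lemma itself. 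It needs neither Definition \ref{SYZtypeProperties} $(i)$ nor your openness fallback. Read literally, $\iota\circ i=\mathrm{id}$ in $(i)$ would even give $[\cL(\olo_{\cX_{\xi}})]=0$. The fallback would require $[\cL_s]$ to be locally constant in $\omega$, which is not among the hypotheses.
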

\begin{proof} Fix $s > 0$. The class of the holomorphic volume form $\chOm_{s}$ of the Hodge-theoretic mirror to $(X,  s(\bB + \ii \omega), \Omega)$ is given by
\begin{align*}
(s E.\Rea\Omega)^{-1}\left(\sigma_0 +  s(\hat{\bB} + \ii \omega)  + \left(\frac{ s^2 (\omega^2 - \hat{\bB}^2)}{2} + 1 - \ii s\omega.(\sigma_0 + s\hat{\bB})\right)E\right). 
\end{align*} 

Using $E.\sigma_0 = 1$, $\hat{\bB}.\sigma_0 = 0$ we compute 
\begin{align*}
& ( E.\Rea\Omega)\chOm_{s}.\sigma_0\\
& = s^{-1}\Big(\sigma_0 +  s(\hat{\bB} + \ii  \omega)  +  \\
& \left(\frac{ s^2( \omega^2 - \hat{\bB}^2 )}{2} + 1 - \ii s\omega.(\sigma_0 +  s\hat{\bB})\right)E\Big)(\sigma_0)\\
&= -\frac{2}{s} + \ii \omega.\sigma_0 + \left(\frac{ s (\omega^2 - \hat{\bB}^2) }{2} + \frac{1}{s} - \ii \omega.(\sigma_0 + s\hat{\bB})\right) \\
&= -\frac{1}{s} + s \left(\frac{\omega^2 - \hat{\bB}^2}{2} - \ii \omega.\hat{\bB}\right) = -\frac{1}{s} + s\frac{(\omega - \ii \hat{\bB})^2}{2}.
\end{align*}
On the other hand, we have
\begin{align*}
\int_X e^{u \omega - \ii u\hat{\bB}} = u^2 \frac{(\omega - \ii \hat{\bB})^2}{2}. 
\end{align*}
Thus the Gamma identity
\begin{equation*}
\int_X e^{u \omega_X - \ii u \hat{\bB}} = \int_{\cL(\olo_X)} \chOm_s + O(s^{-1}),\,s = u^2   
\end{equation*} 
holds with 
\begin{equation*}
[\cL(\olo_X)] = \PD(\sigma_0) \in H_2(\chX_s, \Z)  
\end{equation*}
for all sufficiently large $s > 0$.
\end{proof}
As mentioned in the Introduction, our second step is showing that, under the assumption that $\cL_s$ is \emph{stable}, one can construct a suitable \emph{family} $\cZ$ of K3 surfaces, with a special fibre $\cZ_1$ isomorphic to $(\chX_s, \chB_s + \ii\chom_s, \chOm_s)$, and a further special fibre $\cZ_0$, on which a sLag with required properties exists. A sufficiently strong form of homological mirror symmetry then allows to deform this solution from $\cZ_0$ to $\cZ_1 = \chX$. The upshot of this construction is illustrated in the diagrams \eqref{Diagram1}, \eqref{Diagram2} below. The construction involves a study of hyperk\"ahler rotation combined with a characterisation of stability for $\cL_s$. 
\begin{definition}[\cite{GrossWilson}, Section 1]\label{GrossWilsonRotDef} Fix an angle $\psi$. We define the \emph{Gross-Wilson hyperk\"ahler rotation} of the structure $(\chX_s, \chom_s, \chOm_s)$ with phase $e^{\ii\psi}$, namely 
\begin{equation*}
(\chX^{\psi}_K, \chom^{\psi}_K, \chOm^{ \psi}_K)
\end{equation*}
by the identities
\begin{align*}
\chOm^{ \psi}_{K} &= \Imm(e^{\ii\psi}\chOm_s)+ \ii \chom_s,\\
\chom^{ \psi}_{K} &= \Rea(e^{\ii\psi} \chOm_s) 
\end{align*}  
at the level of forms.
\end{definition}
\begin{rmk} Gross-Wilson only consider the angle $\psi = 0$.
\end{rmk}
Let us write down the rotation explicitly for our applications.
\begin{lemma}\label{HKCohClasses} Introduce the classes
\begin{align*}
&\alpha := \sigma_0 + s\hat{\bB}+\left(\frac{s^2(\omega^2 - \hat{\bB}^2)}{2} + 1 \right)E\\
&\beta := s\omega + \left( - s\omega.(\sigma_0 + s\hat{\bB})\right)E.
\end{align*}
Then, using the specific choice of representative $\hat{\bB}$ introduced in Lemma \ref{GrossWilsonLem}, the holomorphic volume and K\"ahler classes of the Gross-Wilson hyperk\"ahler rotation are given by
\begin{align*}
\chOm^{ \psi}_{K} &= (E.\Rea\Omega)^{-1}\left(\sin(\psi)\alpha + \cos(\psi)\beta\right)\\
& + \ii (E.\Rea\Omega)^{-1}\left(\Imm \Omega - \Imm \Omega . (\sigma_0 + s\hat{\bB}) E \right),\\
\chom^{ \psi}_{K} &= ( E.\Rea\Omega)^{-1}\left(\cos(\psi)\alpha - \sin(\psi)\beta\right).
\end{align*}
\end{lemma}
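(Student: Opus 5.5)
The plan is to substitute the explicit classes from Lemma \ref{GrossWilsonLem}, with $(\omega, \hat{\bB})$ replaced by $(s\omega, s\hat{\bB})$ as in the proof of Lemma \ref{GammaLemma}, into Definition \ref{GrossWilsonRotDef}, and then expand $e^{\ii\psi}\chOm_s$ into real and imaginary parts. Concretely, write the Gross-Wilson formula for the scaled data as
\begin{equation*}
(E.\Rea\Omega)\,\chOm_s = \alpha + \ii\beta,
\end{equation*}
where $\alpha$ is the real part and $\beta$ the imaginary part. Reading these off gives exactly the real class $\alpha = \sigma_0 + s\hat{\bB} + \bigl(\tfrac{s^2(\omega^2-\hat{\bB}^2)}{2}+1\bigr)E$ and the class $\beta = s\omega - s\omega.(\sigma_0+s\hat{\bB})\,E$, as in the statement.

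Before doing this I need to fix the normalising factor. In Lemma \ref{GammaLemma} the overall prefactor was written as $(sE.\Rea\Omega)^{-1}$, while the statement here uses $(E.\Rea\Omega)^{-1}$. I will follow the statement, either by taking $\Rea\Omega$ to scale with $s$ as in Remark \ref{NormalisationRmk} (so that the factor $s$ is absorbed) or by noting that overall positive constants are irrelevant, and I will keep the same convention for $\chom_s$. One point to check carefully is that all pairings and lifts use the distinguished lift $\hat{\bB}$ with $\hat{\bB}.\sigma_0 = 0$. This matters because, without that choice, the $E$-coefficients are only defined modulo $E$ and the formulas would not hold on the nose.

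Next, I expand
\begin{equation*}
e^{\ii\psi}(\alpha+\ii\beta) = (\cos\psi\,\alpha - \sin\psi\,\beta) + \ii(\sin\psi\,\alpha + \cos\psi\,\beta).
\end{equation*}
Definition \ref{GrossWilsonRotDef} then gives $\chom^{\psi}_K = \Rea(e^{\ii\psi}\chOm_s) = (E.\Rea\Omega)^{-1}(\cos\psi\,\alpha - \sin\psi\,\beta)$. For the holomorphic form it gives $\Rea\chOm^{\psi}_K = \Imm(e^{\ii\psi}\chOm_s) = (E.\Rea\Omega)^{-1}(\sin\psi\,\alpha+\cos\psi\,\beta)$ and $\Imm\chOm^{\psi}_K = \chom_s$. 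For the last of these I substitute the Gross-Wilson formula for $\chom$ with $\hat{\bB}$ replaced by $s\hat{\bB}$, namely $(E.\Rea\Omega)^{-1}\bigl(\Imm\Omega - \Imm\Omega.(\sigma_0+s\hat{\bB})E\bigr)$.

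The identities of Definition \ref{GrossWilsonRotDef} are stated at the level of forms, and taking cohomology classes is linear, so the class-level formulas follow immediately. The only step that needs care is the bookkeeping of the $s$-scaling in the prefactor and in $\chom_s$. I would handle this by checking consistency with the computation in the proof of Lemma \ref{GammaLemma}, namely that pairing with $\sigma_0$ reproduces $-1/s + s(\omega-\ii\hat{\bB})^2/2$ up to the overall normalisation.
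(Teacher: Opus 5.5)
Your proposal is correct and matches the paper's approach: the paper gives no separate proof of this lemma, treating it as the direct substitution of Lemma \ref{GrossWilsonLem} (for the data $s\omega$, $s\hat{\bB}$, so that $\chOm_s = (E.\Rea\Omega)^{-1}(\alpha + \ii\beta)$) into Definition \ref{GrossWilsonRotDef}, followed by expanding $e^{\ii\psi}(\alpha+\ii\beta)$ as you do. Commit to your first option, reading $\Omega$ as normalised against $s\omega$ (so $(\Rea\Omega)^2 = s^2\omega^2$); then the prefactor $(E.\Rea\Omega)^{-1}$ is exact for both the $\alpha,\beta$ terms and $\chom_s$. Drop the fallback that ``positive constants are irrelevant'': the lemma asserts equalities of classes, and under the unscaled convention of Lemma \ref{GammaLemma} the extra factor $s^{-1}$ would multiply only the $\alpha,\beta$ terms and not $\Imm \chOm^{\psi}_K = \chom_s$.
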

\begin{cor} Choose the angle $\hat{\psi}$ (modulo $2\pi$) such that
\begin{equation*}
\cos(\hat{\psi}) = \frac{\alpha^2 - \beta^2}{\left((\alpha^2 - \beta^2)^2 + 4 (\alpha.\beta)^2\right)^{\frac{1}{2}}},\,\sin(\hat{\psi}) = -\frac{2\alpha.\beta}{\left((\alpha^2 - \beta^2)^2 + 4 (\alpha.\beta)^2\right)^{\frac{1}{2}}}.
\end{equation*}
Then, introducing an angle $\hat{\theta}$ (modulo $2\pi$), depending on $s$, with
\begin{align*}
\cos(\hat{\theta}) &= \frac{s^2(\omega^2 - \hat{\bB}^2)  + 2}{\left((s^2(\omega^2 - \hat{\bB}^2)  + 2)^2+(2 s\omega.(\sigma_0 + s\hat{\bB}))^2\right)^{\frac{1}{2}}},\\
\sin(\hat{\theta}) &= \frac{2 s\omega.(\sigma_0 + s\hat{\bB}) }{\left((s^2(\omega^2 - \hat{\bB}^2)  + 2)^2+(2 s\omega.(\sigma_0 + s\hat{\bB}))^2\right)^{\frac{1}{2}}},  
\end{align*}
we have
\begin{equation*}
\chom^{ \psi}_{K} = (s E.\Rea\Omega)^{-1} \hat{\eta}, 
\end{equation*}
where
\begin{equation*}
\hat{\eta} := \sin(\hat{\theta})\left(\sigma_0 + s\hat{\bB} \right) + \cos(\hat{\theta}) s \omega
\end{equation*}
is a K\"ahler class on the Gross-Wilson hyperk\"ahler rotation $(\chX^{\hat{\psi}}_K, \chom^{\hat{\psi}}_K, \chOm^{\hat{\psi}}_K)$. In particular, assuming the further normalisation condition $E.\Rea\Omega = 1$ discussed in Remark \ref{NormalisationRmk}, we have
\begin{equation*}
\chom^{ \psi}_{K} = s^{-1}\hat{\eta} = \sin(\hat{\theta})\left(s^{-1}\sigma_0 + \hat{\bB} \right) + \cos(\hat{\theta}) \omega. 
\end{equation*}
\end{cor}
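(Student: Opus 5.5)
The plan is to substitute the explicit classes of Lemma \ref{HKCohClasses} into $\chom^{\psi}_K=\Rea(e^{\ii\psi}\chOm_s)$ and to show that for the angle $\hat{\psi}$ the $E$-component of $\cos(\psi)\alpha-\sin(\psi)\beta$ cancels. One point needs care first: the $s$-scaled mirror volume form has overall prefactor $(sE.\Rea\Omega)^{-1}$ (as in the proof of Lemma \ref{GammaLemma}). I will therefore work with
\begin{equation*}
\chom^{\psi}_K=(sE.\Rea\Omega)^{-1}\left(\cos(\psi)\alpha-\sin(\psi)\beta\right).
\end{equation*}
Set $X:=s^2(\omega^2-\hat{\bB}^2)+2$, $c:=s\omega.(\sigma_0+s\hat{\bB})$ and $D:=(X^2+4c^2)^{1/2}$. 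Then $\cos(\hat{\theta})=X/D$ and $\sin(\hat{\theta})=2c/D$, and we can write $\alpha=(\sigma_0+s\hat{\bB})+\tfrac{X}{2}E$ and $\beta=s\omega-cE$.

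\textbf{Step 1 (evaluate the defining data of $\hat{\psi}$).} Using $E^2=0$, $E.\sigma_0=1$, $\sigma_0^2=-2$, $\hat{\bB}.\sigma_0=0$ and $\omega,\hat{\bB}\in E^{\perp}$, I will compute
\begin{equation*}
\alpha^2=s^2\omega^2=\beta^2,\qquad \alpha.\beta=s\,\omega.\sigma_0+s^2\omega.\hat{\bB}-c=0.
\end{equation*}
So, taken literally, the quotients defining $\cos(\hat{\psi})$ and $\sin(\hat{\psi})$ are of the form $0/0$. This is the main obstacle. I will handle it by showing that these are precisely the conditions under which $\cos(\psi)\alpha-\sin(\psi)\beta$ has constant square $s^2\omega^2$ for every $\psi$. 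Hence $\hat{\psi}$ is not fixed by a quadratic condition, and the only remaining normalisation is the one the Corollary actually uses, namely that $\chom^{\hat{\psi}}_K$ has no $E$-component. I then verify that the stated formula is consistent with this choice, in the sense that it is its degenerate limit: perturbing away from $\alpha^2=\beta^2$, $\alpha.\beta=0$ in the direction of the $E$-coefficients produces $\hat{\psi}=\hat{\theta}-\pi/2$.

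\textbf{Step 2 (the cancellation).} Set $\hat{\psi}=\hat{\theta}-\pi/2$, so that $\cos(\hat{\psi})=\sin(\hat{\theta})$ and $\sin(\hat{\psi})=-\cos(\hat{\theta})$. The $E$-coefficient of $\cos(\hat{\psi})\alpha-\sin(\hat{\psi})\beta$ is then
\begin{equation*}
\sin(\hat{\theta})\tfrac{X}{2}-\cos(\hat{\theta})c=\tfrac{2c}{D}\tfrac{X}{2}-\tfrac{X}{D}c=0.
\end{equation*}
What remains is exactly $\sin(\hat{\theta})(\sigma_0+s\hat{\bB})+\cos(\hat{\theta})s\omega=\hat{\eta}$. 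This gives $\chom^{\hat{\psi}}_K=(sE.\Rea\Omega)^{-1}\hat{\eta}$, and under the normalisation $E.\Rea\Omega=1$ of Remark \ref{NormalisationRmk} this becomes the displayed formula $s^{-1}\hat{\eta}$.

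\textbf{Step 3 (Kähler property).} By Definition \ref{GrossWilsonRotDef}, $\chom^{\hat{\psi}}_K=\Rea(e^{\ii\hat{\psi}}\chOm_s)$ and $\chOm^{\hat{\psi}}_K=\Imm(e^{\ii\hat{\psi}}\chOm_s)+\ii\chom_s$ are obtained from the hyperk\"ahler triple $(\chom_s,\Rea\chOm_s,\Imm\chOm_s)$ of the Ricci-flat metric by an $SO(3)$ rotation at the level of forms. Hence $\chom^{\hat{\psi}}_K$ is a Ricci-flat K\"ahler form for the complex structure determined by $\chOm^{\hat{\psi}}_K$. Its class is therefore K\"ahler, and by Step 2 that class is a positive multiple of $\hat{\eta}$, so $\hat{\eta}$ is K\"ahler on $\chX^{\hat{\psi}}_K$. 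As a consistency check, $\hat{\eta}^2=s^2\omega^2>0$, which is in line with the constant square found in Step 1.
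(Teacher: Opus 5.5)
Your proposal is correct and follows the paper's own computation: you evaluate $\cos(\hat{\psi})\alpha-\sin(\hat{\psi})\beta$ directly, see the $E$-terms cancel to leave $\hat{\eta}$, and get the K\"ahler property from the hyperk\"ahler rotation, exactly as the paper does. Your observation that $\alpha^2=\beta^2$ and $\alpha.\beta=0$ is also right. The paper's proof carries a factor $E.s\omega$ in $\alpha^2-\beta^2$ and $\alpha.\beta$, which vanishes since $\omega\in E^{\perp}\otimes\R$, and formally cancels it; that cancellation gives exactly your $\hat{\psi}=\hat{\theta}-\pi/2$, and your prefactor $(sE.\Rea\Omega)^{-1}$, rather than the $(E.\Rea\Omega)^{-1}$ written in Lemma \ref{HKCohClasses}, is the one consistent with the statement.
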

\begin{proof} We compute
\begin{align*}
\alpha^2 &= -2 + s^2\hat{\bB}^2 + 2\left(\frac{s^2(\omega^2 - \hat{\bB}^2)}{2} + 1 \right) = s^2\omega^2,\\
\beta^2 &= s^2\omega^2 - 2 s\omega.(\sigma_0 + s\hat{\bB})(E. s\omega),
\end{align*}
so we have
\begin{align*}
\alpha^2 -\beta^2&= 2 s\omega.(\sigma_0 + s\hat{\bB})(E.s\omega). 
\end{align*}
Similarly
\begin{align*}
 \alpha.\beta &= \sigma_0. s\omega - s\omega.(\sigma_0 + s\hat{\bB}) + s^2\hat{\bB}.\omega + \left(\frac{s^2(\omega^2 - \hat{\bB}^2)}{2} + 1 \right)(E.s\omega)\\
&= \left(\frac{s^2(\omega^2 - \hat{\bB}^2)}{2} + 1 \right)(E.s\omega).
\end{align*}
By construction, 
\begin{equation*}
\chom^{\psi}_K = ( E.\Rea\Omega)^{-1}\left(\cos(\psi)\alpha - \sin(\psi)\beta\right) 
\end{equation*}
is a K\"ahler class on $(\chX^{\psi}_K, \chom^{\psi}_K, \chOm^{ \psi}_K)$ for all $\psi$. Specialising to $\hat{\psi}$ we see that the class 
\begin{align*}
\hat{\eta}:=\frac{(\alpha^2 -\beta^2) \alpha + (2 \alpha.\beta) \beta}{\left((\alpha^2 - \beta^2)^2 + 4 (\alpha.\beta)^2\right)^{\frac{1}{2}}} 
\end{align*}
is a K\"ahler class on $(\chX^{\hat{\psi}}_K, \chom^{\hat{\psi}}_K, \chOm^{\hat{\psi}}_K)$. Direct computation shows 
\begin{align*}
&(\alpha^2 -\beta^2) \alpha + (2 \alpha.\beta) \beta \\&= 2 s\omega.(\sigma_0 + s\hat{\bB}) \left(\sigma_0 + s\hat{\bB}+\left(\frac{s^2(\omega^2 - \hat{\bB}^2)}{2} + 1 \right)E\right)(E.s\omega)\\
& + \left(s^2(\omega^2 - \hat{\bB}^2)  + 2 \right) \left(s\omega + \left( - s\omega.(\sigma_0 + s\hat{\bB})\right)E\right)(E.s\omega)\\&= \left(2 s\omega.(\sigma_0 + s\hat{\bB}) \left(\sigma_0 + s\hat{\bB} \right) + \left(s^2(\omega^2 - \hat{\bB}^2)  + 2 \right) \omega\right)(E.s\omega),
\end{align*}
as well as
\begin{align*}
&\left((\alpha^2 - \beta^2)^2 + 4 (\alpha.\beta)^2\right)^{\frac{1}{2}} \\
&= \left((s^2(\omega^2 - \hat{\bB}^2)  + 2)^2+(2 s\omega.(\sigma_0 + s\hat{\bB}))^2\right)^{\frac{1}{2}}(E.s\omega).
\end{align*}
The claim follows. 
\end{proof} 
 
\begin{definition} We say that the K3 surface $(X, s(\bB + \ii \omega), \Omega)$ with sublattice $H = \langle \sigma_0, E \rangle \subset \bL$ satisfies the \emph{twisted ampleness condition} if 
\begin{enumerate}
\item[$(i)$] $\sigma_0 + s\hat{\bB}$ is a real $(1,1)$-class on $(X, \Omega)$;
\item[$(ii)$] denoting by $\hat{\theta}$ the unique angle (modulo $2\pi$) such that  
\begin{equation*}
\Imm e^{-\ii \hat{\theta}} (s\omega + \ii (\sigma_0 + s\hat{\bB}))^2 = 0,
\end{equation*}
in cohomology, we have that 
\begin{equation*}
\Rea e^{-\ii \hat{\theta}} (s\omega + \ii (\sigma_0 + s\hat{\bB})) 
\end{equation*}
is a K\"ahler class on $(X, \Omega)$.
\end{enumerate}
\end{definition} 
A straightforward computation then gives the following characterisation.
\begin{lemma}\label{TwistedPosLem} Suppose that $\sigma_0 + s\hat{\bB}$ is a real $(1,1)$-class on $(X, \Omega)$. Then the angle $\hat{\theta}$ (modulo $2\pi$) is given by 
\begin{align*}
\cos(\hat{\theta}) &= \frac{s^2(\omega^2 - \hat{\bB}^2)  + 2}{\left((s^2(\omega^2 - \hat{\bB}^2)  + 2)^2+(2 s\omega.(\sigma_0 + s\hat{\bB}))^2\right)^{\frac{1}{2}}},\\
\sin(\hat{\theta}) &= \frac{2 s\omega.(\sigma_0 + s\hat{\bB}) }{\left((s^2(\omega^2 - \hat{\bB}^2)  + 2)^2+(2 s\omega.(\sigma_0 + s\hat{\bB}))^2\right)^{\frac{1}{2}}},  
\end{align*}
and the twisted ampleness condition on $(X, s(\bB + \ii \omega), \Omega)$ with sublattice $H = \langle \sigma_0, E \rangle \subset \bL$ is equivalent to the condition that the class
\begin{equation*}
\hat{\eta} = \sin(\hat{\theta})\left(\sigma_0 + s\hat{\bB} \right) + \cos(\hat{\theta}) s \omega
\end{equation*}
is K\"ahler on $(X, \Omega)$.
\end{lemma}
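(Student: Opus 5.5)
Set $\gamma := \sigma_0 + s\hat{\bB}$, which by assumption is a real $(1,1)$-class, and expand the complex class $s\omega + \ii\gamma$. Its square is
\begin{equation*}
(s\omega + \ii \gamma)^2 = s^2\omega^2 - \gamma^2 + 2\ii\, s\,\omega.\gamma .
\end{equation*}
The normalisations $\sigma_0^2 = -2$ and $\hat{\bB}.\sigma_0 = 0$ give
\begin{equation*}
\gamma^2 = -2 + s^2\hat{\bB}^2,
\end{equation*}
so the real part of the square is $s^2(\omega^2 - \hat{\bB}^2) + 2$, and the imaginary part is $2s\,\omega.(\sigma_0 + s\hat{\bB})$. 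Hence $(s\omega + \ii\gamma)^2 = \rho e^{\ii\phi}$, where
\begin{equation*}
\rho = \left((s^2(\omega^2 - \hat{\bB}^2)+2)^2 + (2s\omega.(\sigma_0+s\hat{\bB}))^2\right)^{\frac12},
\end{equation*}
and $\cos\phi$, $\sin\phi$ are exactly the quotients appearing in the statement.

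The condition $\Imm e^{-\ii\hat{\theta}}(s\omega + \ii\gamma)^2 = 0$ reads $\rho\sin(\phi - \hat{\theta}) = 0$, so $\hat{\theta} \equiv \phi$ modulo $\pi$. To make the angle unique modulo $2\pi$, I would fix the branch with $e^{-\ii\hat{\theta}}(s\omega+\ii\gamma)^2 = \rho > 0$; this is the convention implicit in the definition of twisted ampleness and the one matching the angle $\hat{\theta}$ of the preceding Corollary. With this choice $\hat{\theta} = \phi$, which gives the stated formulas for $\cos\hat{\theta}$ and $\sin\hat{\theta}$. I would also note that $\rho \neq 0$ for $s \gg 0$, since $\omega^2 > 0$ and the leading term $s^2(\omega^2 - \hat{\bB}^2)$ can always be handled jointly with the imaginary part.

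Next I expand the linear class:
\begin{equation*}
\Rea e^{-\ii\hat{\theta}}(s\omega + \ii\gamma) = \cos(\hat{\theta})\, s\omega + \sin(\hat{\theta})\,\gamma ,
\end{equation*}
which is precisely $\hat{\eta}$. Since (i) of the definition is assumed, twisted ampleness reduces to (ii), which is now literally the statement that $\hat{\eta}$ is K\"ahler on $(X,\Omega)$. This proves the equivalence.

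The only delicate step is the sign and branch choice for $\hat{\theta}$. Replacing $\hat{\theta}$ by $\hat{\theta}+\pi$ flips $\hat{\eta}$ to $-\hat{\eta}$, which is never K\"ahler, so the convention has to be stated explicitly. I would justify it by requiring that the angle be the continuous one with $\hat{\theta} \to 0$ as $\hat{\bB}, s^{-1} \to 0$, so that $\hat{\eta}$ deforms from the K\"ahler class $s\omega$.
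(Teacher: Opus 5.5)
Your proposal is correct and is exactly the ``straightforward computation'' the paper invokes without writing out. You expand $(s\omega+\ii\gamma)^2$ using $\sigma_0^2=-2$ and $\hat{\bB}.\sigma_0=0$, fix the branch $e^{-\ii\hat{\theta}}(s\omega+\ii\gamma)^2>0$, and read off $\Rea e^{-\ii\hat{\theta}}(s\omega+\ii\gamma)=\hat{\eta}$; you are right that the defining condition only fixes $\hat{\theta}$ modulo $\pi$, and the stated formulas encode this branch choice. Your vague justification that $\rho\neq 0$ can be replaced by the Hodge index theorem, which gives it for every $s>0$: if $\omega.\gamma=0$ then $\gamma^2\le 0$, so the real part satisfies $s^2\omega^2-\gamma^2\ge s^2\omega^2>0$.
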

We note an immediate implication.
\begin{cor} Suppose that $(X, s(\bB + \ii \omega), \Omega)$, with sublattice $H = \langle \sigma_0, E \rangle \subset \bL$, satisfies the twisted ampleness condition. Set
\begin{equation*}
\eta := (s E.\Rea\Omega)^{-1} \hat{\eta} = (s E.\Rea\Omega)^{-1} \left(\sin(\hat{\theta})\left(\sigma_0 + s\hat{\bB} \right) + \cos(\hat{\theta}) s\omega\right)
\end{equation*}
In particular, assuming the further normalisation condition $E.\Rea\Omega = 1$ of Remark \ref{NormalisationRmk}, we have
\begin{equation*}
\eta = s^{-1}\hat{\eta}. 
\end{equation*}
Then 
\begin{equation*}
(X, \hat{\eta}, \Omega),\, (\chX^{\hat{\psi}}_K, \hat{\eta}, \chOm^{\hat{\psi}}_K)
\end{equation*}
are both K3 surfaces endowed with a fixed K\"ahler class $\hat{\eta} = s\eta$. 
\end{cor}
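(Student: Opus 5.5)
The statement combines two facts proved earlier, one for each K3 surface, so the plan is to check each separately and then match the cohomology classes inside the fixed lattice $\bL$.

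\emph{The surface $(X,\hat{\eta},\Omega)$.} Since we assume the twisted ampleness condition, condition $(i)$ holds: $\sigma_0 + s\hat{\bB}$ is a real $(1,1)$-class on $(X,\Omega)$. Hence Lemma \ref{TwistedPosLem} applies. It identifies the angle $\hat{\theta}$ by the stated formulas, and it shows that twisted ampleness is equivalent to $\hat{\eta} = \sin(\hat{\theta})(\sigma_0 + s\hat{\bB}) + \cos(\hat{\theta}) s\omega$ being K\"ahler on $(X,\Omega)$. So $(X,\hat{\eta},\Omega)$ is a K3 surface with a K\"ahler class. Since K\"ahler classes form a cone, any positive multiple of $\hat{\eta}$ is also K\"ahler, in particular $\eta$, provided the scaling factor is positive (see the last step).

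\emph{The rotated surface $(\chX^{\hat{\psi}}_K, \hat{\eta}, \chOm^{\hat{\psi}}_K)$.} The preceding corollary, computed from Lemma \ref{HKCohClasses}, gives $\chom^{\hat{\psi}}_K = (sE.\Rea\Omega)^{-1}\hat{\eta}$. Here $\hat{\eta}$ is the same expression in $\sigma_0,\hat{\bB},\omega$ and the same angle $\hat{\theta}$ as in Lemma \ref{TwistedPosLem}, because the two sets of formulas for $\hat{\theta}$ agree verbatim. The class $\chom^{\hat{\psi}}_K$ is K\"ahler on the Gross-Wilson rotation by construction, since hyperk\"ahler rotation of a Ricci-flat structure produces the K\"ahler form $\Rea(e^{\ii\hat{\psi}}\chOm_s)$. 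Multiplying by the constant $sE.\Rea\Omega$ keeps it K\"ahler as long as that constant is positive. Since $s>0$, and $E.\Rea\Omega = 1$ after the harmless rescaling of Remark \ref{NormalisationRmk}, the constant equals $s>0$. Hence $\hat{\eta} = s\,\chom^{\hat{\psi}}_K$ is K\"ahler on the rotated surface, and $\eta = s^{-1}\hat{\eta} = \chom^{\hat{\psi}}_K$.

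\emph{Matching the classes.} Both surfaces have their second cohomology identified with $\bL$ through the fixed markings: $X$ directly, and $\chX^{\hat{\psi}}_K$ because it has the same underlying manifold as $\chX_s$, whose classes are expressed in $\bL$ via \eqref{ClassicalMirrorMap}. Under these identifications the two K\"ahler classes are literally the same element $\hat{\eta} = s\eta$ of $\bL\otimes\R$.

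The only genuine point to watch is sign bookkeeping. One must check that the angle $\hat{\theta}$ from the twisted ampleness definition is the same one appearing in the corollary for the hyperk\"ahler rotation, and not $\hat{\theta}+\pi$. Otherwise $\hat{\eta}$ would be K\"ahler on one surface and anti-K\"ahler on the other. This is settled by comparing the two explicit formulas for $\cos(\hat{\theta})$ and $\sin(\hat{\theta})$, which coincide, together with $sE.\Rea\Omega>0$.
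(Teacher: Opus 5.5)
Your proposal is correct and follows the paper, which states this corollary without a separate proof, as an immediate consequence of two earlier results. Lemma \ref{TwistedPosLem} gives that twisted ampleness is equivalent to $\hat{\eta}$ being K\"ahler on $(X,\Omega)$, and the preceding corollary gives that $\hat{\eta}$ is K\"ahler on the rotation $\chX^{\hat{\psi}}_K$, with both classes read in the same lattice $\bL$; your checks that the two formulas for $\hat{\theta}$ coincide and that $sE.\Rea\Omega>0$ only make explicit what the paper leaves tacit.
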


\begin{cor}\label{TwistedToFamilyCor} Suppose that $(X, s(\bB + \ii \omega), \Omega)$, with sublattice $H = \langle \sigma_0, E \rangle \subset \bL$, satisfies the twisted ampleness condition. There exists a holomorphic family of K3 surfaces endowed with K\"ahler classes
\begin{equation*}
\cZ := (\cZ_t, \omega_{\cZ_t}, \Omega_{\cZ_t}) \to T
\end{equation*} 
over an analytic space $T$, with special fibres $(\cZ_i, \omega_{\cZ_i}, \Omega_{\cZ_i})$, $i=0,1$, satisfying the following properties:
\begin{enumerate}
\item[$(i)$]  $(\cZ_1, \omega_{\cZ_1}, \Omega_{\cZ_1})$ isomorphic to $(\chX_s, \chom_s, \chOm_s)$;
\item[$(ii)$] $(X, \eta, \Omega)$ is isomorphic to the hyperk\"ahler rotation of $(\cZ_0, \omega_{\cZ_0}, \Omega_{\cZ_0})$ with angle $\hat{\psi}$;
\item[$(iii)$] assuming the normalisation condition $E.\Rea\Omega = 1$ of Remark \ref{NormalisationRmk}, the cohomology class $\omega_{\cZ_t}$ is constant along the family.   
\end{enumerate}
\end{cor}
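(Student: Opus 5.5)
The plan is to build the family $\cZ$ by interpolating hyperk\"ahler data at the level of periods and then invoking the surjectivity of the period map and the global Torelli theorem for marked K3 surfaces with K\"ahler classes.

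\emph{The two ends.} Work in $\bL\otimes\R$ with the fixed marking, and assume $E.\Rea\Omega = 1$. The relevant data are a positive $3$-plane $\Pi$ together with an oriented orthonormal (up to common scale) frame $(\omega,\Rea\Omega,\Imm\Omega)$. Take the endpoint $t=1$ to be the hyperk\"ahler triple $(\chom_s, \Rea\chOm_s, \Imm\chOm_s)$ of $\chX_s$. For $t=0$, undo the Gross-Wilson rotation with angle $\hat\psi$ applied to $(X,\eta,\Omega)$. Definition \ref{GrossWilsonRotDef} is an $SO(3)$ rotation inside the positive $3$-plane spanned by the hyperk\"ahler triple. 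So we take the triple $(\Imm\Omega,\ \cdot,\ \cdot)$ obtained by inverting $\chOm^{\hat\psi}_K=\Imm(e^{\ii\hat\psi}\chOm)+\ii\chom$, $\chom^{\hat\psi}_K=\Rea(e^{\ii\hat\psi}\chOm)$. Concretely, the K\"ahler class of $\cZ_0$ is $\Imm$ of the $\cZ_0$-volume form. Here $\eta$ plays the role of $\chom^{\hat\psi}_K$, which the preceding corollary identifies with $s^{-1}\hat\eta$. Lemma \ref{HKCohClasses} shows that the triple built from $\alpha,\beta$ and $\Imm\Omega-(\Imm\Omega.(\sigma_0+s\hat\bB))E$ spans the same positive $3$-plane as the $\chX_s$ triple. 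The only point to check is the orthogonality and norm relations, and these follow from the computations of $\alpha^2,\beta^2,\alpha.\beta$ in the corollary.

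\emph{The interpolation.} Let $T=[0,1]$, or a small complexified neighbourhood of it so that we get an analytic space. For $t\in T$, take the triple obtained by rotating within the fixed $3$-plane, by $SO(3)$ elements $R_t$ joining the identity to the rotation that maps the $\cZ_0$ triple to the $\chX_s$ triple. Throughout, the K\"ahler component is the unchanged vector (the class $\Imm\Omega$ shifted by $E$, i.e.\ $\chom_s$). We choose $R_t$ to be rotations about that axis, as Definition \ref{GrossWilsonRotDef} suggests. The complex volume class then varies as $e^{\ii\phi(t)}$ times a fixed-plane rotation, while $\omega_{\cZ_t}$ stays constant. This gives $(iii)$, with the normalisation ensuring that the scale also matches. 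Since the $3$-plane and the K\"ahler direction are fixed, each period point $\Omega_t$ is a positive $2$-plane orthogonal to $\omega_{\cZ_t}$. Surjectivity of the period map (Todorov, Looijenga) and the Kähler version of global Torelli give marked K3 surfaces $\cZ_t$ with $\omega_{\cZ_t}$ a K\"ahler class. This is where generality is used: it rules out $(-2)$-classes orthogonal to $\Omega_t$ with $\omega_{\cZ_t}.\delta=0$, and for $\omega,\Omega$ away from countably many analytic subvarieties we may assume there are none. The universal deformation and the fine moduli of marked pairs then assemble these into a holomorphic family over $T$. At $t=1$ Torelli gives $(i)$. At $t=0$, rotating back with angle $\hat\psi$ produces the triple $(\eta,\Omega)$, and twisted ampleness (Lemma \ref{TwistedPosLem}) states exactly that $\eta$ (equivalently $\hat\eta$) is K\"ahler on $(X,\Omega)$. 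Torelli then identifies $(X,\eta,\Omega)$ with the rotation of $\cZ_0$, which is $(ii)$.

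\emph{Main obstacle.} The hardest step is the orientation and chamber bookkeeping at $t=0$. Hyperk\"ahler rotation sends the Ricci-flat metric to itself, but the K\"ahler cone of the rotated complex structure is a chamber cut out by $(-2)$-curves. Torelli requires the rotated class $\eta$ to lie in the correct chamber of the positive cone of $(X,\Omega)$, and not just to be positive. I would handle this with the twisted ampleness hypothesis, possibly composing with a Weyl reflection if needed, and with the fact that $\hat\psi$ was chosen to make $\chom^{\hat\psi}_K$ exactly $s^{-1}\hat\eta$.
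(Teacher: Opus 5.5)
\textbf{There is a genuine gap: your interpolation step rests on a false premise.} Lemma \ref{HKCohClasses} concerns the triple of $\chX^{\hat\psi}_K$. That triple lies in the $3$-plane of $\chX_s$ tautologically, and the lemma says nothing about $\cZ_0$.

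Undoing the $\hat\psi$-rotation on $(X,\eta,\Omega)$ gives $\omega_{\cZ_0}=\Imm\Omega$ and period plane $\langle\eta,\Rea\Omega\rangle$. So the $3$-plane of $\cZ_0$ is $\langle\Imm\Omega,\Rea\Omega,\eta\rangle$. With $E.\Rea\Omega=1$ we have $\chOm_s=\alpha+\ii\beta$, so the $3$-plane of $\chX_s$ is $\langle\Imm\Omega,\alpha,\beta\rangle$. Since $\eta\in\langle\alpha,\beta\rangle\perp\Imm\Omega$, the two planes agree only if $\Rea\Omega\in\langle\alpha,\beta\rangle$.

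This fails. $\Rea\Omega$ is orthogonal to the $(1,1)$-classes $\sigma_0+s\hat\bB$ and $\omega$. On the other hand, $\beta.(\sigma_0+s\hat\bB)=0$, $\alpha.(\sigma_0+s\hat\bB)=\tfrac{s^2(\omega^2+\hat\bB^2)}{2}-1$ and $\beta.\omega=s\omega^2>0$. So for $s\gg0$ no nonzero combination of $\alpha,\beta$ is orthogonal to both classes. This is exactly the paper's point that $\chX_s$ is not a hyperk\"ahler rotation of $X$.

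The construction also fails on its own terms. Rotations about the K\"ahler axis inside a fixed $3$-plane only multiply $\Omega_t$ by a phase. The period point, hence the complex structure, is therefore constant along your family. It cannot join $\cZ_0$ to $\cZ_1$, whose period planes $\langle\eta,\Rea\Omega\rangle$ and $\langle\alpha,\beta\rangle$ differ. So the Torelli identifications you invoke at $t=0$ and $t=1$ cannot both hold.

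\textbf{What is missing} is a genuine variation of complex structure that leaves the $3$-plane, together with the cohomological fact that makes this compatible with (iii). The paper observes that $(X,\hat\eta,\Omega)$ and $(\chX^{\hat\psi}_K,\hat\eta,\chOm^{\hat\psi}_K)$ share two things:
\begin{itemize}
\item the K\"ahler class $\hat\eta$ (for $X$ this is precisely twisted ampleness);
\item the imaginary part of the volume form: $\Imm\chOm^{\hat\psi}_K=\chom_s=\Imm\Omega-(\Imm\Omega.(\sigma_0+s\hat\bB))E=\Imm\Omega$, because $\sigma_0+s\hat\bB$ is of type $(1,1)$ and $E.\Rea\Omega=1$.
\end{itemize}
Your phrase ``$\Imm\Omega$ shifted by $E$'' misses that this shift vanishes, and that vanishing is the whole content of (iii).

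The paper then joins these two points in Schumacher's moduli space of K3 surfaces with K\"ahler class $\hat\eta$. It uses K\"ahler Torelli to keep $\Imm\Omega_{\cY_t}\equiv\Imm\Omega$, so only $\Rea\Omega_{\cY_t}$ moves, from $\Rea\Omega$ to $\Rea\chOm^{\hat\psi}_K$ inside $\langle\eta,\Imm\Omega\rangle^{\perp}$. Finally it rotates the whole family back by $\hat\psi$, so that $\omega_{\cZ_t}=\Imm\Omega_{\cY_t}$ is constant.

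If you replace your $SO(3)$ path by such a path of period points, your period-map/Torelli framework can be made to work. The chamber issue you flag must then be controlled along the entire path, so that $\hat\eta$ stays K\"ahler on every $\cY_t$, not just at $t=0$.
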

\begin{proof}
By the work of Schumacher \cite{Schumacher_CY} there exists a coarse moduli space of K3 surfaces endowed with a K\"ahler class $\eta$ given by a complex analytic space $\mathscr{M}(\eta)$. Thus, there exists a family 
\begin{equation*}
\cY \to T
\end{equation*}
of K3 surfaces $(\cY_t, \hat{\eta}, \Omega_{\cY_t})$ over a base analytic space $T$, corresponding to an analytic map $T \to \mathscr{M}(\eta)$, such that the fibres $\cY_{0} := \cY_{t_0}$, $\cY_{1} := \cY_{t_1}$ over points $t_0, t_1 \in T$ are given by
\begin{equation*}
\cY_0 \cong (X, \hat{\eta}, \Omega),\,\cY_1 \cong (\chX^{\hat{\psi}}_K, \hat{\eta}, \chOm^{\hat{\psi}}_K).
\end{equation*} 

We denote by 
\begin{equation*}
\cZ \to T
\end{equation*} 
the unique family of K3 surfaces such that each fibre $(\cY_{t}, \eta = s^{-1}\hat{\eta}, \Omega_{\cY_t})$ is obtained as the hyperk\"ahler rotation with angle $\hat{\psi}$ of $(\cZ_t, \omega_{\cZ_t}, \Omega_{\cZ_t})$. We write $(\cZ_i, \omega_{\cZ_i}, \Omega_{\cZ_i}) := (\cZ_{t_i}, \omega_{\cZ_{t_i}}, \Omega_{\cZ_{t_i}})$ for $i =0, 1$. 

Then, by construction, we have an isomorphism of K3 surfaces
\begin{equation*}
(\chX_s, \chom_s, \chOm_s) \cong (\cZ_1, \omega_{\cZ_1}, \Omega_{\cZ_1}),
\end{equation*} 
while $(X, \eta, \Omega)$ is obtained as the hyperk\"ahler rotation of $(\cZ_0, \omega_{\cZ_0}, \Omega_{\cZ_0})$ with angle $\hat{\psi}$.

As for the claim $(iii)$, we note that, by construction, we have $\Omega_{\cY_0} = \Omega$ and, Lemma \ref{HKCohClasses},
\begin{align*}
\Omega_{\cY_1} = \chOm^{ \hat{\psi}}_{K} &= (E.\Rea\Omega)^{-1}\left(\sin(\hat{\psi})\alpha + \cos(\hat{\psi})\beta\right)\\
& + \ii (E.\Rea\Omega)^{-1}\left(\Imm \Omega - \Imm \Omega . (\sigma_0 + s\hat{\bB}) E \right).
\end{align*} 
But, under our assumptions, we have
\begin{equation*}
\Omega . (\sigma_0 + s\hat{\bB}) = 0,\,E.\Rea\Omega = 1,
\end{equation*}
so we find 
\begin{align*}
\Imm \Omega_{\cY_0} = \Imm \Omega_{\cY_1} = \Imm \Omega. 
\end{align*} 
By the K\"ahler Torelli theorem (\cite{HuyK3Book}, Theorem 7.5.3), there is a variation of complex structure for $(X, \hat{\eta}, \Omega)$, compatible with $\hat{\eta}$, uniquely determined by a family of complex forms $\Omega'_t$, satisfying
\begin{align*}
&\Imm \Omega'_t = \Imm \Omega,\,t\in T,\\
&\Omega'_0 = \Omega,\,\Omega'_1 = \Omega_{\cY_1} = \chOm^{ \hat{\psi}}_{K}. 
\end{align*}  
So, up to replacing $\cY$ with the family determined by $\Omega'_t$, we can assume that
\begin{align*}
\Imm \Omega_{\cY_t} = \Imm \Omega,\,t \in T. 
\end{align*} 
By the definition of the family $\cZ$ in terms of hyperk\"ahler rotation, we have
\begin{align*}
\Omega_{\cY_t} &= \Imm(e^{\ii\hat{\psi}}\Omega_{\cZ_t})+ \ii \omega_{\cZ_t},
\end{align*}  
so $\omega_{\cZ_t} = \Imm \Omega_{\cY_t} = \Imm \Omega$ is constant as claimed. 
\end{proof}
We may now introduce the relevant Bridgeland stability conditions.  
\begin{definition} Let $S$ be a projective surface over an algebraically closed field. For $s > 0$, $\omega_S, \beta$ in $\NS(S)_{\R}$ with $\omega_S$ ample, we let $\tau_s$ be the canonical Bridgeland stability condition on $\DCoh(X)$ with central charge
\begin{equation*}
\opZ_s(F) = -\int_S e^{- (\ii s \omega_S + \beta)} \ch(F),
\end{equation*}   
with heart $\cA$ obtained by a single tilt of the standard heart $\Coh(S)$ (see \cite{BridgelandK3}).
\end{definition}
\begin{rmk} In our applications the algebraically closed base field is given by the universal Novikov field $\Lambda$ with the canonical inclusion $\C\pow{q} \subset \Lambda$, see \cite{SheridanSmithGP}, Section 1.4.
\end{rmk}
The following result, initially proved by the author and extended by Fan, provides the key link between Bridgeland stability and twisted ampleness. 
\begin{thm}[\cite{J_toricThomasYau}, Section 6 and \cite{YWFan_stability}, Theorem 1.3]\label{TwistedPosThm} Fix $S$ a projective surface over an algebraically closed field. Let $\cE$ be a line bundle on $S$. Suppose $\cE^s := \cE^{\otimes s}$ is $\tau_s$-stable for $s \gg 0$ and $\omega_S$ is general. Then the following twisted ampleness holds: denoting by $\bar{\theta}$ the unique angle (modulo $2\pi$) such that  
\begin{equation*}
\Imm e^{-\ii \bar{\theta}} (s\omega + \ii (s c_1(\cE) + \beta))^2 = 0,
\end{equation*}
in cohomology, we have that 
\begin{equation*}
\Rea e^{-\ii \bar{\theta}} (s\omega + \ii (s c_1(\cE) + \beta)) 
\end{equation*}
lies in the ample cone $\Amp(S) \subset \NS(S)_{\R}$.
\end{thm}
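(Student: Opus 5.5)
The plan is to follow the strategy of \cite{J_toricThomasYau}, Section 6, as extended in \cite{YWFan_stability}. We test $\tau_s$-stability of $\cE^s$ against subobjects coming from curves, read off the large-$s$ asymptotics of the resulting phase inequalities, and then recover ampleness from a Nakai--Moishezon type criterion on the surface $S$. Throughout, write $\zeta_s := s\omega_S + \ii(s c_1(\cE) + \beta)$, so that up to sign and the exponential normalisation $\opZ_s(\cE^s)$ is governed by $\zeta_s^2/2$ plus lower-order terms.

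\textbf{Step 1: test objects in the heart.} For every irreducible curve $C \subset S$ consider the short exact sequence of sheaves
\begin{equation*}
0 \to \cE^s(-C) \to \cE^s \to \cE^s|_C \to 0.
\end{equation*}
I first check that, for $s \gg 0$ (uniformly in $C$, or at least for the finitely many curve classes that will matter), all three terms lie in the tilted heart $\cA$. For the line bundles this follows from their $\omega_S$-slopes with respect to the twist $\beta$ being dominated by $s\,c_1(\cE).\omega_S$. For the torsion sheaf $\cE^s|_C$ it holds automatically. The sequence then gives a subobject $\cE^s(-C) \subset \cE^s$ in $\cA$. Stability yields
\begin{equation*}
\arg \opZ_s(\cE^s|_C) > \arg \opZ_s(\cE^s).
\end{equation*}

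\textbf{Step 2: asymptotics.} I would compute $\opZ_s(\cE^s|_C) = -\int_C e^{-(\ii s\omega_S+\beta)}\ch(\cE^s|_C)$. To leading order in $s$ this is linear in $s$ and proportional to $\zeta_s . C$. Meanwhile $\opZ_s(\cE^s)$ is quadratic, with phase asymptotic to that of $\zeta_s^2$, i.e.\ fixed by $\bar\theta$. Comparing the phase inequality at the appropriate orders should give
\begin{equation*}
\Rea\, e^{-\ii\bar\theta/2}\,\zeta_s . C \geq 0 .
\end{equation*}
Here I would fix the half-angle convention so that it matches the normalisation in the statement, where the angle is defined via $\Imm e^{-\ii\bar\theta}\zeta_s^2 = 0$. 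The genericity of $\omega_S$ is used to exclude equality along a countable union of walls, upgrading the inequality to a strict one.

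\textbf{Step 3: positivity of the square.} The identity $\Imm e^{-\ii\bar\theta}\zeta_s^2 = 0$ forces $e^{-\ii\bar\theta}\zeta_s^2$ to be real. Since $\opZ_s(\cE^s)$ lies in the upper half-plane (as an object of the heart with nonzero rank), this real number has the sign making $(\Rea e^{-\ii\bar\theta}\zeta_s)^2 > 0$. This is an elementary computation with $x = \Rea$ and $y = \Imm$ of the rotated class, using $x^2 - y^2 > 0$.

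\textbf{Step 4: ampleness.} The class $\Rea e^{-\ii\bar\theta}\zeta_s$ has positive square and is positive on every curve. By Nakai--Moishezon for real classes on surfaces (Campana--Peternell), it is then ample. Positivity on all curves can be reduced to finitely many classes using that it is positive on $\omega_S$-dominant directions together with the structure of negative curves.

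The main obstacle is Step 2 together with Step 1's uniformity. The curves $C$ form an infinite family, and the correction terms, of order $s^0$ and coming from $\beta$ and $C^2$, must not overwhelm the leading term. I would first handle curves with $C^2 \ge 0$, which are nef and where positivity follows from Hodge index. I would then treat the finitely many negative curves of bounded $\omega_S$-degree individually, invoking generality of $\omega_S$ for the borderline cases.
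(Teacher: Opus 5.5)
The paper gives no proof of this statement; it is quoted from \cite{J_toricThomasYau} and \cite{YWFan_stability}. So I assess your proposal on its own terms. Your ingredients are the natural ones: test against $\cE^s(-C)\subset\cE^s$, take $s$ large, and apply a real Nakai--Moishezon criterion. But two steps fail as written.

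\textbf{The angle and Step 3.} No asymptotics are needed. Match the sign of $\beta$ to the central charge. Then $\opZ_s(\cE^s)=\tfrac12\zeta_s^2$ exactly, with no lower-order terms, and $\opZ_s(\cE^s|_C)=\ii\,\zeta_s.C+\tfrac12 C^2$. Put $\bar\theta_s=\arg\zeta_s^2$ and $\eta_s=\Rea(e^{-\ii\bar\theta_s}\zeta_s)$. Then the see-saw inequality $\phi(\cE^s)<\phi(\cE^s|_C)$ is precisely
\begin{equation*}
\eta_s . C > \tfrac12\, C^2 \sin\bar\theta_s .
\end{equation*}
So the class that appears is the full-angle class of the statement. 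Your $\Rea(e^{-\ii\bar\theta/2}\zeta_s)$ is a different class.

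Step 3 is also wrong as argued. First, $(e^{-\ii\bar\theta}\zeta_s)^2\neq e^{-\ii\bar\theta}\zeta_s^2$. Second, $x^2-y^2>0$ would not give $x^2>0$ anyway. The correct reason is the Jacob--Yau identity: $\Imm e^{-\ii\theta}(\omega+\ii F)^2=0$ forces $(\cos\theta\,\omega+\sin\theta\,F)^2=\omega^2$. Hence $\eta_s^2=s^2\omega_S^2>0$ unconditionally.

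\textbf{Negative curves (the more serious gap).} For $C^2<0$ the display only gives $\eta_s.C>-\tfrac12|C^2|\sin\bar\theta_s$. So stability at any fixed $s$ never yields $\eta_s.C>0$, even for a single negative curve. Genericity of $\omega_S$ cannot remove an $O(1)$ slack of the wrong sign. Your reduction to finitely many negative curves of bounded degree is also unjustified. K3 surfaces (the case used in this paper) or blow-ups of $\PP^2$ in nine general points can carry infinitely many negative curves, necessarily of unbounded degree.

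The missing idea is to pass to the limit curve by curve, so that no uniformity in $C$ is ever needed:
\begin{itemize}
\item Fix $C$ and take $s\ge s_0(C)$, so that $\cE^s(-C)\in\cA$.
\item Divide the display by $s$ and let $s\to\infty$. You get $\eta_\infty.C\ge0$, where $\eta_\infty=\lim_s\eta_s/s=\cos\theta_\infty\,\omega_S+\sin\theta_\infty\,c_1(\cE)$ and $\theta_\infty=\arg(\omega_S+\ii c_1(\cE))^2$.
\item Equality $\eta_\infty.C=0$ is the nontrivial cubic condition $(\omega_S^2-c_1(\cE)^2)\,\omega_S.C+2(\omega_S.c_1(\cE))(c_1(\cE).C)=0$ on $\omega_S$. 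There are countably many curve classes, so for general $\omega_S$ every inequality is strict.
\item Since $\eta_\infty^2=\omega_S^2>0$, Campana--Peternell makes $\eta_\infty$ ample.
\item Openness of $\Amp(S)$ then gives $\eta_s$ ample for $s\gg0$.
\end{itemize}

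\textbf{Step 1.} Step 1 silently assumes $c_1(\cE).\omega_S>0$. If it is negative, then $\cE^s[1]\in\cA$ and you must test against $\cE^s(C)|_C\hookrightarrow\cE^s[1]$. This gives $\eta_s.C>\tfrac12 C^2|\sin\bar\theta_s|$ with $\bar\theta_s\in(-\pi,0)$, and the same limit argument applies.
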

We will apply it in the following form.
\begin{cor}\label{TwistedPosCor}  Suppose 
\begin{equation*}
\psi^*\DCoh(\cX_{\xi}) \cong \DFuk(\chX_s, \chB_s + \ii \chom_s), 
\end{equation*}
satisfies the properties of Definition \ref{SYZtypeProperties}. Let $\sigma_s$ denote the standard Bridgeland stability condition on $\DCoh(\cX_{\xi})$ with central charge
\begin{equation*}
\opZ_s(F) = -\int_X e^{- (\ii s\omega + s\hat{\bB} + \sigma_0)} \ch(F),
\end{equation*}  
where we use the identification $i\!: \NS(X)_{\R} \cong \NS(\cX_{\xi})_{\R}$ induced by $i\!:\Pic(X) \cong \Pic(\cX_{\xi})$ appearing in Definition \ref{SYZtypeProperties}, $(i)$. Assume that $\olo_{\cX_{\xi}} \in \Pic(X)$ is $\sigma_s$-stable for $s \gg 0$ and $\omega$ is general (i.e. does not lie on the union of countably many analytic subvarieties in $H^{1,1}(X, \R)$). Then twisted ampleness holds on $(X, s(\bB + \ii \omega), \Omega)$ with sublattice $H = \langle \sigma_0, E \rangle \subset \bL$: the class
\begin{equation*}
\hat{\eta} = \sin(\hat{\theta})\left(\sigma_0 + s\hat{\bB} \right) + \cos(\hat{\theta}) s\omega
\end{equation*}
is K\"ahler on $(X, \Omega)$.  
\end{cor}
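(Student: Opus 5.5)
We deduce the corollary from Theorem \ref{TwistedPosThm}, applied on $S = \cX_{\xi}$ (over the Novikov field $\Lambda$) with $\cE = \olo_{\cX_{\xi}}$ and the data transported through $i$. The central charge of $\sigma_s$ is $-\int e^{-(\ii s\omega + \beta)}\ch(F)$ with $\beta := s\hat{\bB} + \sigma_0$. This $\beta$ depends on $s$, whereas in Theorem \ref{TwistedPosThm} the B-field is fixed and the twist enters through $\cE^{\otimes s}$. We therefore first rewrite the data in the form required by the theorem.

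Since $\sigma_0$ and $\hat{\bB}$ lie in $\NS(X)_{\R}$ (admissibility), the classes $i(\sigma_0)$ and $i(\hat{\bB})$ make sense in $\NS(\cX_{\xi})_{\R}$. We would like to regard $s\hat{\bB}$ as the ``$s c_1(\cE)$'' term and $\sigma_0$ as the fixed $\beta$. This requires two observations.
\begin{enumerate}
\item[$(a)$] Tensoring by a line bundle $M$ acts on $\DCoh$ and shifts the B-field, with $\ch(F\otimes M) = \ch(F)e^{c_1(M)}$. Hence, for $\hat{\bB}$ integral, $\olo$ being $\sigma_s$-stable is equivalent to $\olo(-s\hat{\bB})$, i.e.\ $\cE^{\otimes s}$ with $\cE = \olo(-\hat{\bB})$ up to sign conventions, being stable for the stability condition with B-field $\sigma_0$.
\item[$(b)$] For real $\hat{\bB}$, the proofs in \cite{J_toricThomasYau}, Section 6 and \cite{YWFan_stability} only use the numerical class $s c_1(\cE)$ through the central charge and the large-$s$ asymptotics of walls. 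So Theorem \ref{TwistedPosThm} holds verbatim with $c_1(\cE)$ replaced by any real class in $\NS_{\R}$, and $\cE^s$ replaced by $\olo$ in the twisted stability condition.
\end{enumerate}
We would state this extension explicitly and check it against Fan's argument. The key step there is destabilising by curves $C$ through a sign of $\Imm(e^{-\ii\bar\theta}\opZ_s(\olo_C))$, which is linear in the twist and insensitive to integrality. Generality of $\omega$ supplies the genericity hypothesis, and it transfers through $i$ because $i$ preserves ample cones.

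Theorem \ref{TwistedPosThm} then yields that $\Rea e^{-\ii\bar\theta}(s\omega + \ii(s\hat{\bB} + \sigma_0))$ is ample on $\cX_{\xi}$, with $\bar\theta$ defined by the vanishing imaginary part. Comparing with the definition of twisted ampleness shows $\bar\theta = \hat\theta$ (up to the sign of $\hat{\bB}$ matching the convention $e^{-(\ii s\omega + \beta)}$). Lemma \ref{TwistedPosLem} identifies this class with $\hat{\eta}$, since $\sigma_0 + s\hat{\bB}$ is a real $(1,1)$-class by admissibility. We would verify the angle formulas directly by expanding $(s\omega + \ii(\sigma_0 + s\hat{\bB}))^2$ using $\sigma_0^2 = -2$ and $\sigma_0.\hat{\bB} = 0$: the real part is $s^2(\omega^2 - \hat{\bB}^2) + 2$ and the imaginary part is $2s\omega.(\sigma_0 + s\hat{\bB})$, which is exactly the normalisation in the lemma.

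Finally, $i$ preserves ample cones (Definition \ref{SYZtypeProperties} $(i)$) and $\NS(X)_{\R}$ is the real span, so ampleness of $i(\hat\eta)$ on $\cX_{\xi}$ gives $\hat\eta \in \Amp(X)$. Since $\hat\eta$ is a real $(1,1)$-class, it is K\"ahler on $(X,\Omega)$. The main obstacle is $(b)$: justifying Theorem \ref{TwistedPosThm} with a real, $s$-dependent twist $s\hat{\bB} + \sigma_0$ rather than a tensor power of a line bundle, and making sure the sign conventions for $\hat{\bB}$ match. If the extension to real twists is delicate, the first fallback is to approximate $\hat{\bB}$ by rational classes, clear denominators by reparametrising $s$, and use openness of the ample cone together with continuity of $\hat\theta$.
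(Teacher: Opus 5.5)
Your proposal is correct and follows the same route as the paper, whose proof is just the one-line deduction from Theorem \ref{TwistedPosThm} together with the fact that $i$ preserves ample cones. The paper tacitly reads $s\hat{\bB}$ as the twist $s\,c_1(\cE)$ and $\sigma_0$ as the fixed B-field; your steps (a)--(b), the sign bookkeeping and the angle check via Lemma \ref{TwistedPosLem} make explicit what it takes for granted (though your rational-approximation fallback would need more care, since stability does not obviously transfer from $\hat{\bB}$ to a nearby $\hat{\bB}'$ when the B-field difference $s(\hat{\bB}'-\hat{\bB})$ grows with $s$).
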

\begin{proof} This follows at once from Theorem \ref{TwistedPosThm} and our assumption in Definition \ref{SYZtypeProperties}, $(ii)$ that $i\!: \NS(X)_{\R} \cong \NS(\cX_{\xi})_{\R}$ preserves the ample cones. 
\end{proof}
\begin{definition} For $s > 0$, we let $\chs_s$ be the Bridgeland stability condition on $\DFuk(\chX_s, \chom_s)$ induced from $\sigma_s = (\cA, \opZ_s)$ through the equivalence
\begin{equation*}
\DFuk(\chX_s, \chB_s+ \ii \chom_s) \cong \psi^*\DCoh(\cX_{\xi}).
\end{equation*}
\end{definition}
\begin{cor}\label{StabToFamilyCor} Suppose $\omega$ is general and $\cL_s \in \DFuk(\chX_s, \chB_s+\ii\chom_s)$ is $\chs_s$-stable for $s \gg 0$. Then there exists a family $\cZ \to T$ satisfying the properties of Corollary \ref{TwistedToFamilyCor}.  
\end{cor}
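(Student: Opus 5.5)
The plan is to chain Corollary \ref{TwistedPosCor} with Corollary \ref{TwistedToFamilyCor}, using the definition of $\chs_s$ as the pullback of $\sigma_s$ through the fixed mirror equivalence. This is essentially an unwinding of definitions.

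First I will transport stability across the equivalence. By definition, $\chs_s$ is induced from $\sigma_s = (\cA, \opZ_s)$ through
\begin{equation*}
\DFuk(\chX_s, \chB_s+\ii\chom_s) \cong \psi^*\DCoh(\cX_{\xi}),
\end{equation*}
and by Definition \ref{StructureObject} the object $\cL_s$ corresponds to $\olo_{\cX_{\xi}}$ under this equivalence. An exact equivalence carries hearts to hearts and central charges to central charges, so it identifies semistable and stable objects of each phase. Hence $\chs_s$-stability of $\cL_s$ for $s \gg 0$ is equivalent to $\sigma_s$-stability of $\olo_{\cX_{\xi}}$ for $s \gg 0$.

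Second, I will get twisted ampleness. Since $\omega$ is assumed general, Corollary \ref{TwistedPosCor} gives that the class $\hat{\eta} = \sin(\hat{\theta})(\sigma_0 + s\hat{\bB}) + \cos(\hat{\theta}) s\omega$ is K\"ahler on $(X,\Omega)$. One point needs care: the angle $\bar{\theta}$ of Theorem \ref{TwistedPosThm} must coincide with the angle $\hat{\theta}$ of Lemma \ref{TwistedPosLem}.
\begin{itemize}
\item Apply Theorem \ref{TwistedPosThm} with $\cE = \olo$ (so $c_1(\cE)=0$) and B-field $\beta = s\hat{\bB} + \sigma_0$.
\item The class $s\omega + \ii(s\hat{\bB}+\sigma_0)$ is exactly the one appearing in the definition of twisted ampleness.
\item The linear-in-$s$ B-field is allowed because the proofs in \cite{J_toricThomasYau}, \cite{YWFan_stability} treat precisely this large scaling regime. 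This is the implicit content of Corollary \ref{TwistedPosCor}.
\end{itemize}

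Third, I will check condition $(i)$ of the twisted ampleness definition, namely that $\sigma_0 + s\hat{\bB}$ is a real $(1,1)$-class. This follows from admissibility (Definition \ref{AdmissDef}), since $\sigma_0, \hat{\bB} \in \NS(X)_{\R}$. By Lemma \ref{TwistedPosLem}, the twisted ampleness condition then holds for $(X, s(\bB+\ii\omega),\Omega)$ with sublattice $H$.

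Finally, Corollary \ref{TwistedToFamilyCor} produces the family $\cZ \to T$ with the required properties $(i)$–$(iii)$.

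The only step I expect to be delicate is the identification of $\bar{\theta}$ with $\hat{\theta}$ and of the positivity conclusions. Theorem \ref{TwistedPosThm} yields membership in the ample cone of $\cX_{\xi}$, whereas we need a K\"ahler class on $(X,\Omega)$. I would handle this through Definition \ref{SYZtypeProperties}$(i)$: the isomorphism $i$ preserves ample cones. Since $\hat{\eta}$ lies in $\NS(X)_{\R}$ by admissibility, ampleness on $X$ is the same as being K\"ahler there.
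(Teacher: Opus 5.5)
Your proposal is correct and follows the paper's argument, which simply chains Corollary \ref{TwistedPosCor} with Corollary \ref{TwistedToFamilyCor}. In that chain, $\chs_s$-stability of $\cL_s$ is transported to $\sigma_s$-stability of $\olo_{\cX_{\xi}}$ by the definition of $\chs_s$. Your additional checks only make explicit what the paper's one-line proof leaves implicit: condition $(i)$ of twisted ampleness follows from admissibility, and ampleness moves from $\cX_{\xi}$ to $X$ via Definition \ref{SYZtypeProperties}$(i)$, which the paper miscites as $(ii)$.
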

\begin{proof} The claim follows immediately from the Corollaries \ref{TwistedToFamilyCor} and \ref{TwistedPosCor}.
\end{proof}
The following diagrams illustrate our construction so far. Initially, we only have a triangle
\begin{equation}\label{Diagram1}
  \begin{tikzcd}
      & (\chX_s, \chB_s + \ii \chom_s, \chOm_s) \arrow{d}{\hat{\psi}-\textrm{rotation}}\\
    (X, s(\bB + \ii \omega), \Omega)\ar{ur}{\textrm{Hodge-theoretic mirror map}} & (\chX^{\hat{\psi}}_K, \hat{\eta}, \chOm^{\hat{\psi}}_K)
  \end{tikzcd}
\end{equation}
but, under the condition that $\cL_s \in \DFuk(\chX_s, \chB_s+\ii\chom_s)$ is $\chs_s$-stable for $s \gg 0$, we can construct the auxiliary families
\begin{equation}\label{Diagram2}
  \begin{tikzcd}
   &  (\cZ_t, \omega_{\cZ_t}, \Omega_{\cZ_t})\ar{dl}\ar{dr}\ar{dddd}{\hat{\psi}-\textrm{rotation}}&\\
      (\cZ_0, \omega_{\cZ_0}, \Omega_{\cZ_0}) \arrow{d}{\hat{\psi}-\textrm{rotation}}  & &  (\chX_s, \chom_s, \chOm_s) \arrow{d}{\hat{\psi}-\textrm{rotation}} \\
    (X, \hat{\eta}, \Omega)\arrow{d}{\cong} & & (\chX^{\hat{\psi}}_K, \hat{\eta}, \chOm^{\hat{\psi}}_K)\arrow{d}{\cong} \\
    (\cY_0, \omega_{\cY_0}, \Omega_{\cY_0})  & & (\cY_1, \omega_{\cY_1}, \Omega_{\cY_1})\\
    & (\cY_t, \omega_{\cY_t}, \Omega_{\cY_t})\ar{ul}\ar{ur} &  
  \end{tikzcd}
\end{equation}
satisfying the properties of Corollary \ref{TwistedToFamilyCor}. In particular we have $\omega_{\cY_t} \equiv \hat{\eta}$ and, assuming the normalisation condition $E.\Rea\Omega = 1$ of Remark \ref{NormalisationRmk}, the cohomology class $\omega_{\cZ_t}$ is constant.
\begin{lemma}\label{SmoothnessLem} Assume $\omega$ is general and $\cL_s \in \DFuk(\chX_s, \chB_s+\ii\chom_s)$ is $\chs_s$-stable for $s \gg 0$, so we have our family of K3 surfaces $\cZ \to T$. Then there exists a closed $0$-dimensional analytic subset $T_{\rm sing} \subset T$, with $t_1 \notin T_{\rm sing}$, such that, for all $t \in T\setminus T_0$, the class $\PD(\sigma_0) \in H_2(\cZ_t, \Z)$ admits a unique smooth special Lagrangian representative $L_t \subset (\cZ_t, \omega_{\cZ_t}, \Omega_{\cZ_t})$, homeomorphic to a sphere. The family $L_t$ is $C^{\infty}$.
\end{lemma}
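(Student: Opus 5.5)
The plan is to use hyperkähler rotation to turn the special Lagrangian problem into a problem about holomorphic $(-2)$-curves. For each $t \in T$, the fibre $(\cZ_t, \omega_{\cZ_t}, \Omega_{\cZ_t})$ is, by construction of $\cZ$ in Corollary \ref{TwistedToFamilyCor}, the $\hat{\psi}$-rotation of $(\cY_t, \hat{\eta}, \Omega_{\cY_t})$. Under the standard Harvey–Lawson/hyperkähler correspondence, a smooth surface in $\cZ_t$ that is special Lagrangian with respect to $\omega_{\cZ_t} = \Imm \Omega_{\cY_t}$ and phase given by $\Rea(e^{\ii\hat\psi}\Omega_{\cZ_t})$ is exactly a smooth complex curve in $\cY_t$, possibly after replacing $\hat\psi$ by $\hat\psi+\pi$ to fix orientation. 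So I would reduce the lemma to the following statement: the class $\sigma_0$ is represented in $\cY_t$ by a unique smooth irreducible $(-2)$-curve $C_t$, which is a rational curve, hence a sphere, for $t$ outside a discrete set.

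First I check the Hodge condition. On $\cY_0 = (X,\Omega)$ we have $\Omega.\sigma_0 = 0$ by admissibility, since $\sigma_0 \in \NS(X)$. Along the family, Corollary \ref{TwistedToFamilyCor} fixes $\Imm\Omega_{\cY_t} = \Imm\Omega$, and $\Imm\Omega.\sigma_0 = 0$. For $\Rea\Omega_{\cY_t}$, I would use the description at $t_1$ from Lemma \ref{HKCohClasses}: $\Rea\chOm^{\hat\psi}_K$ is a combination of $\alpha$ and $\beta$. Then $\alpha.\sigma_0 = -2+\frac{s^2(\omega^2-\hat\bB^2)}{2}+1$ and $\beta.\sigma_0 = s\omega.\sigma_0 - s\omega.(\sigma_0+s\hat\bB)$, which in general do \emph{not} vanish. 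So I would exploit the freedom in choosing $T$: take $T$ to be the connected analytic subset of $\mathscr{M}(\eta)$ where $\sigma_0$ stays of type $(1,1)$ and $\Imm\Omega$ is fixed. This is where genericity of $\omega,\Omega$ enters, together with the computation in Lemma \ref{GammaLemma}, which shows $\chOm_s.\sigma_0 \ne 0$ only through the $s$-dependent $E$-coefficient. If this fails I would instead rotate the phase, replacing $\hat\psi$ by the angle making $\Rea(e^{\ii\hat\psi}\chOm_s).\sigma_0=0$. With $\sigma_0 \in \NS(\cY_t)$ and $\sigma_0^2=-2$, Riemann–Roch gives that $\sigma_0$ or $-\sigma_0$ is effective. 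Positivity $\hat\eta.\sigma_0>0$, which comes from the Kähler property of $\hat\eta$ (Corollary \ref{TwistedPosCor}) along the family, then picks $\sigma_0$ as the effective class.

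Next comes smoothness. The effective class $\sigma_0$ is represented by an irreducible smooth rational curve exactly when $\sigma_0$ is not decomposed as a sum of other $(-2)$-classes in $\NS(\cY_t)$ with positive $\hat\eta$-degree. The locus where $\NS(\cY_t)$ jumps beyond its generic value is a countable union of proper analytic subsets of $T$. Intersected with the relevant bounded set of classes (bounded degree $\hat\eta.D<\hat\eta.\sigma_0$), only finitely many of these loci matter locally, so they cut out a closed analytic subset. I would take this subset as $T_{\rm sing}$, shrinking $T$ to a neighbourhood of the path between $t_0$ and $t_1$ so that it is $0$-dimensional. For $t_1$ I would argue that $\NS$ of $\chX^{\hat\psi}_K$ is generic by the general choice of $\Omega$, so $t_1\notin T_{\rm sing}$.

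Uniqueness follows since $C_t^2=-2<0$ forces $C_t$ to be the only effective representative; in the special Lagrangian language this is \cite{ThomasYau}, Section 5.3. For the $C^\infty$ dependence, a rigid smooth $(-2)$-curve deforms smoothly in a family where its class stays algebraic, because $H^1(N)=H^1(\olo(-2))=0$ makes it unobstructed. Rotating back, this gives a $C^\infty$ family $L_t$.

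I expect the main obstacle to be the Hodge condition: ensuring $\sigma_0$ remains of type $(1,1)$ on every $\cY_t$, including $\cY_1$. This requires a careful choice of $T$ and the phase $\hat\psi$, and I would try the phase adjustment first.
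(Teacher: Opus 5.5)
Your overall strategy is the standard argument behind the paper's appeal to \cite{YLShen_K3FibredSLag}, Lemma 2.3: hyperk\"ahler-rotate so that $\sigma_0$ becomes a $(-2)$-class of type $(1,1)$, use Riemann--Roch and genericity to get a unique smooth rational curve, and rotate back. The problem is that you run it through the fixed $\hat\psi$-rotation $\cY_t$, and that breaks exactly where the lemma is needed. A sLag in class $\PD(\sigma_0)$ on $\cZ_t$ must have phase $\arg\int_{\sigma_0}\Omega_{\cZ_t}$. It is therefore a complex curve in $\cY_t$ only when $\sigma_0$ is $(1,1)$ on $\cY_t$, and, as you observe, this fails at $t_1$.

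Your first remedy, restricting $T$ to the locus where $\sigma_0$ stays $(1,1)$ on $\cY_t$, discards $t_1$, and that is not allowed. The lemma asserts $t_1\notin T_{\rm sing}$, and $L_1\subset\cZ_1\cong\chX_s$ is the sLag the proof of Theorem \ref{MainThm} needs. What works is your fallback, and it should be the construction from the start: for each $t$, rotate by an angle $\psi_t$ with $e^{\ii\psi_t}\int_{\sigma_0}\Omega_{\cZ_t}>0$. In the convention of Definition \ref{GrossWilsonRotDef} this means $\Imm(e^{\ii\psi_t}\Omega_{\cZ_t}).\sigma_0=0$. It does not mean $\Rea(\cdots)=0$, as you wrote; that condition would make $\sigma_0$ orthogonal to the rotated K\"ahler class.

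With this choice, $\sigma_0$ is automatically $(1,1)$ and of positive degree for the rotated K\"ahler class $\Rea(e^{\ii\psi_t}\Omega_{\cZ_t})$, so $\hat\eta$ plays no role. The only input is $\int_{\sigma_0}\Omega_{\cZ_t}\neq 0$. At $t_1$ this follows from Lemma \ref{GammaLemma}: $\int_{\sigma_0}\chOm_s=-s^{-1}+s(\omega-\ii\hat{\bB})^2/2$, which is nonzero for $s\gg0$ by the Hodge index theorem. Likewise, the genericity you need at $t_1$ concerns this rotation of $\chX_s$, not $\chX^{\hat\psi}_K$.

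Two further steps fail as written.

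First, $N_C\cong\olo_{\PP^1}(-2)$ has $H^1\cong H^0(\olo_{\PP^1})^{*}\cong\C$, not $0$. A $(-2)$-curve is rigid but obstructed, and the one-dimensional obstruction space is exactly the Hodge condition on $[C]$. Your conclusion (the curve survives wherever $[C]$ stays $(1,1)$) is true, but it needs semiregularity; here $H^1(N_C)\to H^2(\olo)$ is an isomorphism. Moreover, once $\psi_t$ depends on $t$, the rotated complex structures vary only smoothly, not holomorphically, in $t$. The natural tool is then the one the paper invokes: elliptic (McLean-type) deformation theory of the compact sLag sphere under variation of $(\omega_{\cZ_t},\Omega_{\cZ_t})$. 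This is unobstructed because the cohomological obstructions $\omega_{\cZ_t}.\sigma_0$ and $\Imm(e^{\ii\psi_t}\Omega_{\cZ_t}).\sigma_0$ vanish, and it is rigid because $b_1(S^2)=0$. It gives the $C^\infty$ family directly.

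Second, shrinking $T$ to a neighbourhood of a path cannot make a closed analytic subset that meets the path $0$-dimensional; shrinking only helps if the path avoids the bad set. With the $t$-dependent rotation, the bad set is cut out by real conditions $\arg\int_\delta\Omega_{\cZ_t}\equiv\arg\int_{\sigma_0}\Omega_{\cZ_t}\pmod{\pi}$, one for each class $\delta\perp\omega_{\cZ_t}$ that becomes $(1,1)$ in the rotated structure. These are real walls, not complex Noether--Lefschetz loci. Your argument therefore gives no control on the dimension of this set, or on whether it separates $t_0$ from $t_1$. The paper does not derive this from Noether--Lefschetz considerations either; it defers the structure of the bad locus to the proof of \cite{YLShen_K3FibredSLag}, Lemma 2.3.
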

\begin{proof} The claim that $\PD(\sigma_0) \in H_2(\chX, \Z)$ admits a unique smooth special Lagrangian representative is standard, following from $\sigma^2_0 = -2$ and a sufficiently general choice of $\omega$, yielding a general choice of $(\chX, \chB + \ii \chom, \chOm)$, see e.g. \cite{YLShen_K3FibredSLag}, Lemma 2.3. The proof of the same Lemma actually shows that the condition that such a smooth sLag representative (which is then necessarily unique) exists holds away from a closed analytic subset on the base of a holomorphic submersion. Smooth dependence on parameters follows from ellipticity properties of the special Lagrangian equation.
\end{proof}
\begin{proof}[Completion of the proof of Theorem \ref{MainThm}] Suppose $\cL_s \in \DFuk(\chX_s, \chom_s)$ is $\chs_s$-stable for $s \gg 0$ with respect to a general $\omega$, so we have our family of K3 surfaces $\cZ \to T$ with special fibres 
\begin{equation*}
(\cZ_0, \omega_{\cZ_0}, \Omega_{\cZ_0}),\,(\cZ_1, \omega_{\cZ_1}, \Omega_{\cZ_1}) \cong (\chX, \chom, \chOm),
\end{equation*} 
constructed in Corollary \ref{TwistedToFamilyCor}. We endow them with the same B-field class $\chB_s$. By $(iii)$ of Corollary \ref{TwistedToFamilyCor}, we know that, up to fixing the normalisation $E.\Rea\Omega = 1$ (which is not restrictive, as explained in Remark \ref{NormalisationRmk}), the family $\cZ \to T$ is in fact trivial symplectically, i.e. the cohomology class of $\omega_{\cZ_t}$ is constant. So we have a family of fibrewise homological SYZ fibrations
\begin{equation*}
\pi_t\!: \cZ_t \to S^2,
\end{equation*}
with constant fibre class $E$, and the fibrewise categories $\DFuk(\cZ_t, \chB_s+\ii\chom_s)$ fit together in a local system of categories $\DFuk(\cZ / T)$.

Note that, by the classical mirror relations \eqref{ClassicalMirrorMap} and Lemma \ref{GrossWilsonLem}, we have 
\begin{align*}
&\chB_s = (E. \Rea \Omega)^{-1} \Rea \Omega - \sigma_0\mod E \\
&= \Rea \Omega - \sigma_0\mod E,\\
&\chom_s = (E.\Rea\Omega)^{-1}\left(\Imm \Omega - (\Imm \Omega . (\sigma_0 + s\hat{\bB})) E \right)\\& = \Imm \Omega,  
\end{align*}
by our assumptions and the normalisation $E.\Rea\Omega = 1$. The Gross-Wilson representative of $\chB_s$, for which $\chB_s.\sigma_0 = 0$, is given by
\begin{align*}
\chB_s = \Rea \Omega - \sigma_0 - 2E.
\end{align*}

So we see that, if $\Rea \Omega \in H^{2}(X, \Z)$, then the B-field classes $\chB_s$ are integral, and in particular we have equivalences 
\begin{equation*}
\DFuk(\chX_s, \chB_s + \ii \chom_s) \cong \DFuk(\chX_s, \chom_s) 
\end{equation*}
(see \cite{Sheridan_versality}, Remark 4.11).

Now we replace $T$ with the base $T\setminus T_0$ constructed in Lemma \ref{SmoothnessLem}. Then there is a section $\LL \in \DFuk(\cZ / T)$ of our local system, restricting on each fibre to the object represented by the unique sLag sphere in the class $\PD(\sigma_0) \in H_2(\cZ_t, \Z)$, i.e. such that
\begin{equation*}
\LL_t := \LL|_{\cZ_t} \cong L_t \in \DFuk(\cZ_t, \chB_{\cZ_t} + \ii\omega_{\cZ_t}),
\end{equation*}
endowed with \emph{constant} B-field class $\chB_{\cZ_t} := \chB_s$.

We consider the relative mirror family $\chcalZ \to T$ over $T$ of K3 surfaces defined over $\C\pow{q}$ satisfying 
\begin{equation*}
\DFuk(\cZ_t) \cong \psi^*_t\DCoh((\chcalZ_{t})_{\xi}).
\end{equation*}
This specialises to equivalences
\begin{align*}
\DFuk(\cZ_0, \chB_{\cZ_0} + \ii\omega_{\cZ_0}) &\cong \psi^*_0\DCoh((\chcalZ_0)_{\xi}),\\
\DFuk(\cZ_1, \chB_{\cZ_1} + \ii\omega_{\cZ_1}) &\cong \psi^*\DCoh(\cX_{\xi}) \cong \psi^*_1\DCoh((\chcalZ_1)_{\xi}).
\end{align*} 
We are assuming that $\sigma_0$ is type $(1,1)$ and that $\Omega$ is sufficiently general, so $\sigma_0$ can be represented by a smooth divisor $D \subset X$. The homological SYZ fibration $\pi_0\!: \cZ_0 \to S^2$ has fibre class $E$, satisfying $E.\sigma_0 = 1$, so if we regard $D \subset \cZ_0$ as a special Lagrangian submanifold, using standard properties of hyperk\"ahler rotation (see e.g. \cite{YLShen_K3FibredSLag}, Lemma 2.3), then $D$ is a sLag section of $\pi_0$ in the cohomology class $\pd(\sigma_0)$ and by uniqueness we have $D = L_0$. It follows that there is a line bundle $\cE_0 \in \Pic(\chcalZ_0)$ such that the object $\LL_0 \in \DFuk(\cZ_0, \omega_{\cZ_0})$ is mirror to $\cE_0$.

Let $\cF$ denote the locus of points $t\in T$ for which the object  $\cE_t$ mirror to $\LL_t \in \DFuk(\cZ_t)$ is given by a line bundle. Then $\cF$ is non-empty since we showed $t_0 \in \cF$. 
  
We claim that $\cF \subset T$ is Zariski open and closed. By the equivalence $\DFuk(\cZ / T) \cong \DCoh(\chcalZ / T)$ this becomes a claim about the family $\cE_t \in \DCoh(\chcalZ / T)$ mirror to $\LL_t$. This is a flat family, since at each point it is mirror to a deformation of $\LL_t$ as an object of $\DFuk(\cZ_t)$. The latter claim follows from the general deformation theory of sLags under a variation of the holomorphic volume form, as discussed by Joyce in \cite{Joyce_ConicSLagSurvey}, Section 2.4, i.e. these results show that deforming $L_t$ by varying $\Omega_{\cZ_t}$ induces a deformation of $\LL_t$ as an object of $\DFuk(\cZ_t)$. On the other hand, since $L_t$ is a Lagrangian sphere, all these deformation classes in $\DFuk(\cZ_t)$ are trivial by McLean's theorem, so indeed $\cF \subset T$ must be open and closed.

We elucidate this argument further from a slightly different point of view. Deformations of line bundles on a fibre $\chcalZ_t$ as objects in the derived category remain line bundles (see e.g. \cite{PandhaThomas}, Section 2 for more general results). Thus $\cF$ is open in the Zariski topology.

It follows that $\cE_1 \in \DCoh(\chcalZ_1) \cong \DCoh(\cX_{\xi})$ is in fact the flat limit of a flat family of line bundles on the fibres of $\chcalZ_{\cF} \to \cF$. As such it is a rank $1$ coherent sheaf $F$ on the surface $\cX_{\xi}$. If $F$ is torsion free but not locally free, then it is isomorphic to an object $\cI_S \otimes F'$ where $F'$ is a line bundle and $S \subset \cX_{\xi}$ is a non-empty zero-dimensional subscheme. But this has the wrong topology for a flat limit. Thus, if it is not locally free, $F$ must have torsion, and in an open set around $t_1$ it must smooth to a line bundle. This implies that $F$ has nontrivial deformations as on object of $\Coh(\cX_{\xi})$, contradicting the rigidity of its mirror sLag sphere $L_1$ given by McLean's theorem. 

This shows that $\cE_1$ is given by a line bundle, which is then necessarily isomorphic to $\olo_{\cX_{\xi}}$ by Lemma \ref{GammaLemma}.  

Finally, suppose that $X$, $\chX_s$ is a pair of Green-Plesser type. We observe that, under the additional assumptions:
\begin{enumerate}
\item[$(a)$] $\hat{\bB}$ is induced by a class in $\NS(\widehat{\PP})_{\R}$, where $\widehat{\PP}$ denotes $\PP^3$ or a crepant resolution of $\PP(3,1,1,1)$,
\item[$(b)$] the Gross-Wilson normalised holomorphic volume form satisfies the further conditions that $\Rea \Omega$ is integral and $\chom_s = \Imm \Omega$ is also the pullback of a real Neron-Severi class from the ambient (our general choice of $\Omega$ can be made compatibly with this properties),
\end{enumerate}
the results summarised in Proposition \ref{MirrorProp} all apply, in particular since $\chB_s$ is integral and we have equivalences
\begin{equation*}
\DFuk(\cZ_t, \chB_{\cZ_t} + \ii\omega_{\cZ_t}) \cong \DFuk(\cZ_t, \omega_{\cZ_t}). 
\end{equation*}
\end{proof}
\section{Examples on K3 $\times C$}\label{ProductSec}
 For $s > 0$, let us fix a pair 
\begin{equation*}
(X, s(\bB + \ii \omega), \Omega),\,(\chX_s, \chB_s + \ii \chom_s, \chOm_s)
\end{equation*}
as in the previous sections. We also fix mirror elliptic curves 
\begin{equation*}
(C, \omega_C, \Omega_C),\,(\chC, \omega_{\chC}, \Omega_{\chC}),
\end{equation*}
satisfying 
\begin{equation*}
\DCoh(C) \cong \DFuk(\chC, \omega_{\chC})
\end{equation*}
(see the classical work of Polishchuck and Zaslow \cite{PolishZaslow}). 

Consider the product Calabi-Yau threefolds, \emph{depending on $s > 0$}, given by
\begin{align*}
(\cZ_{\xi}, s\bB+\ii\omega_{Z,s}, \Omega_Z)  &:= (\cX_{\xi} \times C, s \bB + \ii ( s p^*\omega + q^*\omega_C), p^*\Omega \wedge q^* \Omega_C)),\\
(\chZ_s, \chB_s + \ii\omega_{\chZ_s}, \Omega_{\chZ}) &:= (\chX_s \times \chC, \chB_s + \ii(p^* \chom_s + q^*\omega_{\chC}), p^* \chOm_s \wedge q^*\Omega_{\chC}),
\end{align*}
where we denote the projections (using slightly overcharged notation) by
\begin{equation*}
p\!: \cZ_{\xi} \to \cX_{\xi},\,q\!: \cZ_{\xi} \to C;\,p\!: \chZ_s \to \chX_s,\,q\!: \chZ_s \to \chC.
\end{equation*}
\begin{definition} We say that homological mirror symmetry for $\chZ_s$ holds compatibly with Hodge-theoretic mirror symmetry if this holds for 
\begin{equation*}
\psi^*\DCoh(\cX_{\xi}) \cong \DFuk(\chX_s, \chB_s + \ii \chom_s) 
\end{equation*}
and there is a mirror equivalence
\begin{equation*}
\psi^*\DCoh(\cX_{\xi} \times C) \cong \DFuk(\chZ_s, \omega_{\chZ_s}),
 \end{equation*}
satisfying the following compatibility property: if $L_{\chX} \subset \chX_s$ is a Lagrangian section of a homological SYZ fibration $\pi\!: \chX_s \to S^2$ corresponding to a line bundle $\cE \in \Pic(\cX_{\xi})$, and $L_{\chC} \subset \chC$ is a Lagrangian mirror to a line bundle $\cE_{C} \in \Pic(C)$, then the product $L_{\chX} \times L_{\chC} \subset \chZ_s$ defines an object in $\DFuk(\chZ_s, \omega_{\chZ_s})$ which is mirror to $(p^*\cE) \otimes (q^*\cE_{C})$.
\end{definition}

\begin{definition} Fix any line bundle $\cE_C \in \Pic(C)$. For $s \geq 1$, we denote by  
\begin{equation*}
\cL_{\chZ_s} \in \DFuk(\chZ_s, \omega_{\chZ_s})
\end{equation*} 
the object mirror to 
\begin{equation*}
p^*(\olo_{\cX_{\xi}}) \otimes q^*(\cE_C) \in \Pic(\cZ_{\xi}) 
\end{equation*}
(note that $\cL_{\chZ_s}$ depends on a choice of $\cE_C$).
\end{definition}

Liu \cite{Liu_products} showed how to lift stability conditions from a general variety to its product with any curve. We will only use a weak form of the result, which we summarise as follows.

Suppose $\cA$ is the Noetherian heart of a stability condition $\tau = (\cA, \opZ)$ on $\DCoh(\cX_{\xi})$, such that the central charge $\opZ$ has discrete image. Choose an ample line bundle $\olo(1)$ on $C$. Define a subcategory 
\begin{equation*}
\cA_C := \{F \in \DCoh(\cZ_{\xi})\!: p_*(F \otimes q^*\olo(n)) \in \cA \textrm{ for } n \gg 0\} \subset \DCoh(\cZ_{\xi}).
\end{equation*}
By results of Abramovich and Polishchuck \cite{AbraPolish}, the category $\cA_C$ is in fact the Noetherian heart of a bounded t-structure. 

According to the proof of Theorem 3.3 in \cite{Liu_products}, the function
\begin{equation*}
L_{F}(n) := \opZ(p_*(F \otimes q^*\olo(n))) 
\end{equation*}
is linear for $n \gg 0$.
\begin{thm}[\cite{Liu_products}, Theorem 3.3]\label{LiuWeakThm} For $F \in \cA_C$, define
\begin{equation*}
\opZ_C(F) := \lim_{n \to \infty} \frac{\opZ(p_*(p^*\cE_X \otimes q^*(\olo(n))))}{n \vol(\olo(1))}. 
\end{equation*}
Then $(\cA_{C}, \opZ_C)$ is a weak pre-stability condition on $\DCoh(\cZ_{\xi})$.
\end{thm}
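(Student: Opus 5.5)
The plan is to verify directly the three ingredients of a weak pre-stability condition for the pair $(\cA_C, \opZ_C)$: that $\cA_C$ is a heart, that $\opZ_C$ is a group homomorphism on $K(\DCoh(\cZ_{\xi}))$ with the weak positivity property on $\cA_C$, and that Harder--Narasimhan filtrations exist. I read the numerator in the definition of $\opZ_C$ as $\opZ(p_*(F \otimes q^*\olo(n)))$, which is clearly what is intended. The heart property is already supplied by Abramovich--Polishchuk \cite{AbraPolish}, as recalled just before the statement, and I take it as given, together with the Noetherianity of $\cA_C$.

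\emph{Additivity.} For every $F \in \DCoh(\cZ_{\xi})$ I would first show that $n \mapsto \opZ(p_*(F\otimes q^*\olo(n)))$ is affine-linear in $n$. By Grothendieck--Riemann--Roch for the projection $p$, one has $\ch(q^*\olo(n)) = 1 + n\, q^*[\mathrm{pt}]\deg\olo(1)$. Hence the numerical class of $p_*(F\otimes q^*\olo(n))$ equals
\begin{equation*}
[p_*F] + n \deg \olo(1)\, [F|_{\cX_{\xi}\times\{c\}}],
\end{equation*}
where $F|_{\cX_{\xi}\times\{c\}}$ denotes the derived restriction to a fibre over any closed point $c \in C$; its class is independent of $c$ by flatness of $q$. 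Since $\vol(\olo(1)) = \deg\olo(1)$, the limit defining $\opZ_C$ exists and gives
\begin{equation*}
\opZ_C(F) = \opZ(F|_{\cX_{\xi}\times\{c\}}).
\end{equation*}
This expression is manifestly additive on triangles, so $\opZ_C$ factors through $K(\DCoh(\cZ_{\xi}))$. It also has discrete image, because $\opZ$ does.

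\emph{Weak positivity.} I want $\opZ_C(F)\in \{\Imm z>0\}\cup \R_{\le 0}$ for $F \in \cA_C$. Restricting the sequence $0\to\olo(n)\to\olo(n+1)\to\olo_c\to 0$ and pushing forward gives an exact triangle
\begin{equation*}
p_*(F\otimes q^*\olo(n)) \to p_*(F\otimes q^*\olo(n+1)) \to F|_{\cX_{\xi}\times\{c\}}.
\end{equation*}
For $n\gg0$ the first two terms lie in $\cA$. The key claim is that for $n \gg 0$ the first map is injective in $\cA$, so that the fibre restriction lies in $\cA$; then $\opZ_C(F)=\opZ(F|_c)$ lies in the required region because $\opZ$ is a stability function on $\cA$. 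As a cheaper fallback, the imaginary part alone follows from the limit, since $\Imm \opZ \ge 0$ on $\cA$ forces the slope of the linear function $\Imm L_F(n)$ to be $\ge 0$. The real-part condition when $\Imm \opZ_C(F)=0$, however, genuinely needs the fibrewise statement. For that statement I would use the Abramovich--Polishchuk description of $\cA_C$ as a sheaf of t-structures over $C$, whose restrictions to closed points land in $\cA$ up to shifts controlled by torsion in the $C$-direction. Concretely, the kernel in $\cA$ of the map above would be $p_*$ of a subobject of $F$ supported over $c$ twisted by $\olo(n)$, and I would show this vanishes for $n\gg0$ using Noetherianity.

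\emph{HN property.} I would run Bridgeland's criterion (Proposition 2.4 of \cite{BridgelandK3}'s companion framework), adapted to weak stability functions. The needed finiteness inputs are that $\cA_C$ is Noetherian and that $\Imm\opZ_C$ has discrete image in $\R_{\ge0}$. Together these give the standard chain conditions: no infinite sequences of subobjects with strictly increasing phase, and no infinite sequences of quotients with strictly decreasing phase. In the weak setting, objects with $\opZ_C=0$ are assigned maximal phase $1$, and the argument goes through as in Liu's proof.

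I expect the main obstacle to be the fibrewise step: showing that $F|_{\cX_{\xi}\times\{c\}}\in\cA$, or at least that it has $\opZ$ in the closed half-plane with correct sign, for every $F\in\cA_C$. Everything else is formal bookkeeping with Riemann--Roch and Noetherianity.
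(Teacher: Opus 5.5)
Your overall architecture is the right one, and it is the argument behind Liu's Theorem 3.3, which the paper only cites. It consists of three steps: linearity of $L_F(n)$ from an exact sequence on $C$; the identification $\opZ_C(F)=\opZ(F|_{\cX_{\xi}\times\{c\}})$, which gives additivity and discreteness of the image; and the HN property from Noetherianity of $\cA_C$ together with discreteness of $\Imm\opZ_C$. Your correction of the numerator is also right. One minor point: $0\to\olo(n)\to\olo(n+1)\to\olo_c\to 0$ requires $\olo(1)\cong\olo(c)$. In general the cokernel is $\olo_D$ with $D\in|\olo(1)|$ of degree $\vol(\olo(1))$, which is harmless.

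\textbf{The gap.} It sits exactly at the step you flag, and the mechanism you propose there is false. For an arbitrary closed point $c$, the restriction $F|_{\cX_{\xi}\times\{c\}}$ need not lie in $\cA$. Moreover, the kernel of $p_*(F\otimes q^*\olo(n))\to p_*(F\otimes q^*\olo(n+1))$ does not disappear for large $n$.

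Take $F=\iota_{c*}G$ with $0\neq G\in\cA$ and $\iota_c\colon\cX_{\xi}\to\cZ_{\xi}$ the inclusion of the fibre over $c$. Then:
\begin{itemize}
\item $p_*(F\otimes q^*\olo(n))\cong G$ for all $n$, so $F\in\cA_C$;
\item the map induced by a section of $\olo(1)$ vanishing at $c$ is zero;
\item $F|_{\cX_{\xi}\times\{c\}}$ has $\cA$-cohomology $G$ in both degrees $-1$ and $0$.
\end{itemize}
Twisting by $q^*\olo(n)$ does nothing to an object supported over $c$. So no large-$n$ argument and no Noetherianity argument at that fixed point can kill this kernel.

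\textbf{The missing idea.} You need to move $c$. Since $\opZ(F|_{\cX_{\xi}\times\{c\}})=\opZ_C(F)$ is independent of $c$, it suffices to find one point $c$ with $F|_{\cX_{\xi}\times\{c\}}\in\cA$. This is the generic flatness of the Abramovich--Polishchuk heart: every $F\in\cA_C$ is t-flat over a dense open subset of $C$. This amounts to showing that the part of $F$ supported on fibres lies over finitely many points, and that is where Noetherianity genuinely enters.

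With this input, $\opZ_C(F)=\opZ(F_c)$ with $F_c\in\cA$ for general $c$, which gives weak positivity at once. It also gives that $\opZ_C(F)=0$ if and only if $F_c=0$ for general $c$; this is Liu's Corollary 3.5(b), used in the proof of Theorem \ref{ProductThm}. Without it, your argument only yields $\Imm\opZ_C\geq 0$, which, as you note yourself, does not control the real part.
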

As in the previous Section, let $\sigma_s$ be the standard Bridgeland stability condition on $\DCoh(X)$ with central charge
\begin{equation*}
\opZ_s(F) = -\int_X e^{- (\ii s\omega + s\hat{\bB} +\sigma_0)} \ch(F).
\end{equation*}
\begin{definition} Let $\sigma_{C, s}$ be the weak pre-stability condition on $\DCoh(\cZ_{\xi})$ given by the canonical lift of $\sigma_s$ provided by Theorem \ref{LiuWeakThm}. 
\end{definition}
\begin{definition} We denote by $\chs_{C, s}$ the weak pre-stability condition on $\DFuk(\chZ_s, \omega_{\chZ_s})$ induced by $\sigma_{C, s}$ through the equivalence
\begin{equation*}
\psi^*\DCoh(\cZ_{\xi}) \cong \DFuk(\chZ_s, \omega_{\chZ_s}).
\end{equation*}
\end{definition}
\begin{thm}\label{ProductThm} Suppose that homological mirror symmetry for $\chZ_s$ holds compatibly with Hodge-theoretic mirror symmetry. If $\cL_{\chZ_s}$ is $\chs_{C, s}$-stable for $s \gg 0$, then $\cL_{\chZ_s}$ is isomorphic to a smooth special Lagrangian. 
\end{thm}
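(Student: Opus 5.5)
The plan is to reduce Theorem \ref{ProductThm} to Theorem \ref{MainThm}, using the product structure of both the stability condition and the Lagrangian. The key point is that $\chs_{C,s}$-stability of $\cL_{\chZ_s}$ should force $\chs_s$-stability of $\cL_s$ on the K3 factor.

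\textbf{Step 1: descend stability to the K3 factor.} Through the mirror equivalence $\psi^*\DCoh(\cZ_{\xi}) \cong \DFuk(\chZ_s, \omega_{\chZ_s})$, the hypothesis says that $G := p^*\olo_{\cX_{\xi}} \otimes q^*\cE_C$ is $\sigma_{C,s}$-stable for $s \gg 0$. I would show this implies $\olo_{\cX_{\xi}}$ is $\sigma_s$-stable, arguing by contrapositive.

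Suppose $A \hookrightarrow \olo_{\cX_{\xi}}$ in the heart $\cA$ is a destabilising subobject. The pullback $p^*A \otimes q^*\cE_C$ lies in $\cA_C$, because by the projection formula
\[
p_*(p^*A \otimes q^*(\cE_C(n))) \cong A \otimes R\Gamma(C, \cE_C(n)).
\]
For $n \gg 0$ the right-hand side is a direct sum of $h^0(\cE_C(n))$ copies of $A$, hence lies in $\cA$.

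The same formula gives
\[
L_{p^*A\otimes q^*\cE_C}(n) = \chi(\cE_C(n))\,\opZ_s(A),
\]
which is linear in $n$ with leading coefficient $\vol(\olo(1))\,\opZ_s(A)$. Therefore $\opZ_{C}(p^*A\otimes q^*\cE_C) = \opZ_s(A)$. In the same way, $p^*(-)\otimes q^*\cE_C$ is exact on $\cA$ into $\cA_C$, so a destabilising sequence for $\olo_{\cX_{\xi}}$ pulls back to one for $G$ with the same phases. This contradicts the $\sigma_{C,s}$-stability of $G$, so $\olo_{\cX_{\xi}}$ is $\sigma_s$-stable, and hence $\cL_s$ is $\chs_s$-stable for $s \gg 0$.

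\textbf{Step 2: apply Theorem \ref{MainThm} and take the product.} With $\omega$ general, Theorem \ref{MainThm} gives a smooth special Lagrangian sphere $\tilde L \subset \chX_s$ representing $\cL_s$. On the elliptic curve, the object mirror to $\cE_C$ is represented by a straight line $L_{\chC}$ in the flat metric, and such a line is special Lagrangian with respect to $\Omega_{\chC}$.

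The product $\tilde L \times L_{\chC}$ is Lagrangian for $p^*\chom_s + q^*\omega_{\chC}$. Writing $\theta_1, \theta_2$ for the constant phases of $\chOm_s|_{\tilde L}$ and $\Omega_{\chC}|_{L_{\chC}}$, the restriction of $p^*\chOm_s\wedge q^*\Omega_{\chC}$ to the product has constant phase $\theta_1+\theta_2$. So $\tilde L \times L_{\chC}$ is a smooth special Lagrangian homeomorphic to $S^2\times S^1$.

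\textbf{Step 3: identify the product with $\cL_{\chZ_s}$.} In the proof of Theorem \ref{MainThm}, $\tilde L = L_1$ is isomorphic in $\DFuk(\chX_s)$ to the section $\cL(\olo_{\cX_{\xi}})$ of the homological SYZ fibration. The compatibility axiom for $\chZ_s$ then says that $\cL(\olo_{\cX_{\xi}})\times L_{\chC}$ is mirror to $p^*\olo_{\cX_{\xi}}\otimes q^*\cE_C$.

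It remains to check that the isomorphism $\tilde L\cong \cL(\olo_{\cX_{\xi}})$ induces an isomorphism of the products in $\DFuk(\chZ_s)$. I would argue via functoriality of the Künneth-type product functor $\DFuk(\chX_s)\times\DFuk(\chC)\to\DFuk(\chZ_s)$, or alternatively by running the family argument of Theorem \ref{MainThm} fibrewise over $T$ with $\chC$ held fixed.

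I expect Step 1 to be the main obstacle. Showing that subobjects and phases transfer faithfully is straightforward. The converse direction, that $\sigma_{C,s}$-stability of $G$ really controls all subobjects of $\olo_{\cX_{\xi}}$, relies on the fact that $\opZ_C$ only sees the leading coefficient in $n$. Since $\sigma_{C,s}$ is only a weak pre-stability condition, there may be kernel objects of $\opZ_C$, and I would need to check that these do not interfere. Discreteness of the image of $\opZ_s$ is needed for Theorem \ref{LiuWeakThm}; I would check it, and if it fails, approximate by rational $\omega,\hat{\bB}$.
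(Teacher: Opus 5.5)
Your proof is correct, but your Step 1 takes a different and more elementary route than the paper. Steps 2 and 3 reach the same conclusion as the paper.

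\textbf{The paper's route.} The paper descends stability through Liu's structure theory. For the $\sigma_{C,s}$-semistable, t-flat object it takes the sequence $0\to K\to F\to Q\to 0$ with $\opZ_C(Q)=0$ from \cite{Liu_products}, Proposition 3.14. It then uses right t-exactness of restriction to the fibres $X\times\{c\}$, and \cite{Liu_products}, Propositions 3.10, 3.13 and Corollary 3.5(b), to conclude that the generic fibre $\cF_c\cong\olo_{\cX_{\xi}}$ is $\sigma_s$-semistable. Genericity is then invoked to rule out strict semistability.

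\textbf{Your route.} You pull back a destabilising sequence $0\to A\to\olo_{\cX_{\xi}}\to B\to 0$ along the exact functor $p^*(-)\otimes q^*\cE_C\colon\cA\to\cA_C$. You use only the projection formula and the identity $\opZ_C(p^*A\otimes q^*\cE_C)=\opZ_s(A)$. This gives stability of $\olo_{\cX_{\xi}}$ directly, with no detour through semistability and no appeal to genericity at this step.

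\textbf{What each buys.} The paper's argument fits Liu's general framework and says something about arbitrary t-flat objects. Yours exploits the product form of $p^*\olo_{\cX_{\xi}}\otimes q^*\cE_C$ and is self-contained.

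\textbf{Your closing worry.} It is unfounded. The theorem needs only the implication ``$G$ stable $\Rightarrow$ $\olo_{\cX_{\xi}}$ stable'', and your contrapositive proves it completely. Kernel objects of $\opZ_C$ cannot interfere, because the pulled-back pieces have central charges $\opZ_s(A)$ and $\opZ_s(B)$, both nonzero. Controlling subobjects of $G$ that are not pullbacks would matter only for the converse, which is not required.

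\textbf{Step 3.} Here you are more careful than the paper. The paper simply asserts that the product of the sLag sphere with $L_{\chC}$ is isomorphic to $\cL_{\chZ_s}$. However, the compatibility axiom is stated only for Lagrangian sections of the SYZ fibration. So transporting the isomorphism $\tilde L\cong\cL(\olo_{\cX_{\xi}})$ through a Künneth-type functor is an implicit part of the assumed HMS compatibility in both arguments.

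\textbf{Discreteness.} Your fallback of approximating by rational $\omega,\hat{\bB}$ would clash with the genericity of $\omega$ needed for Theorem~\ref{MainThm}. It is also unnecessary: the statement presupposes that $\chs_{C,s}$ is defined, so Liu's hypotheses are assumed, exactly as in the paper.
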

\begin{proof} As above, suppose $\cA$ is the Noetherian heart of a stability condition $\tau$ on $\DCoh(\cX_{\xi})$, such that the central charge $\opZ$ has discrete image. We denote by $\tau_C$ the corresponding weak pre-stability condition on $\DCoh(\cZ_{\xi})$. As recalled in \cite{Liu_products}, Lemma 3.8, there is a slicing $\cP_{C}$ on $\DCoh(\cZ_{\xi})$ such that $\cA_C = \cP_C(> \phi) \cap \cP_C(\leq \phi)$. According to \cite{Liu_products}, Proposition 3.14, if $\cF \in \cA_C$ is a $\tau_C$-semistable object of phase $\phi$ with $\opZ_C(\cF) \neq 0$, then there is an exact sequence in $\cA_C$,
\begin{equation*}
0 \to K \to F \to Q \to 0, 
\end{equation*} 
such that $K \in \cP_C(\phi)$, $Q \in \cP_C(< \phi)$ and $\opZ_C(Q) = 0$ (possibly $Q = 0$). Moreover, combining Propositions 3.10 and 3.13 in \cite{Liu_products} we see that if $K$ is t-flat (i.e. for all closed points $c \in C$ we have $K_c \in \cA$) then $K_c$ is $\tau$-semistable of phase $\phi$ for all $c \in C$. 

Suppose now that $F = p^*\cF \otimes q^*\cE_C$ for $\cF \in \cA$ and a line bundle $\cE_C$ on $C$. Then, according to \cite{Liu_products}, Corollary 3.5 $(a)$, we have $\opZ_C(F) = \opZ(\cF)$. Moreover, as recalled in \cite{Liu_products}, Section 3, for any closed embedding $\iota_{C'}\!: C' \hookrightarrow C$, the functor $\iota^*_{C'}\!: \DCoh(X\times C) \to \DCoh(X\times C')$ is t-right exact for the Abramovich-Polishcuck t-structure, so in particular if $F$ is t-flat then we get a surjection in $\cA$,
\begin{equation*}
 F_c \to Q_c \to 0. 
\end{equation*}    
  
We apply these general results to the stability condition $\tau = \sigma_s$ and the t-flat object $\cF = p^*\olo_{\cX_{\xi}} \otimes q^*\cE_C$. Thus, we have $\cZ_C(\cF) = \opZ(\olo_X) \neq 0$, and if $\cF$ is $\sigma_{C,s}$-stable of phase $\phi$, then for all $c \in C$ we have a surjection in $\cA$,
\begin{equation*}
\cF_c \cong \olo_X \to Q_c \to 0, 
\end{equation*}     
where the object $Q$ satisfies $\opZ_C(Q) = 0$. However, according to \cite{Liu_products}, Corollary 3.5 $(b)$, the latter vanishing can only hold if the set $\{c \in C\!: Q_c = 0\}$ is Zariski open. Thus, base-changing to this Zariski open set, we find that $K \cong \cF$ there, so at a generic point $\cF_c$ is $\sigma_s$-semistable of phase $\phi$. But in our case the restrictions $\cF_c \cong \olo_{\cX_{\xi}}$ are isomorphic for all $c$, and our genericity assumptions rule out that $\olo_{\cX_{\xi}}$ is strictly semistable. 

The upshot of our argument is that if $p^*\olo_{\cX_{\xi}} \otimes q^*\cE_C$ is $\sigma_{C, s}$-stable, then $\olo_{\cX_{\xi}}$ is $\sigma_s$-stable. Using our equivalence $\psi^*\DCoh(\cZ_{\xi}) \cong \DFuk(\chZ_s, \omega_{\chZ_s})$ it follows that, if the object $\cL_{\chZ_s} \in \DFuk(\chZ_s, \omega_{\chZ_s})$ is $\chs_{C, s}$-stable, then $\cL_{s}$ is $\chs_s$-stable. Thus, by Theorem \ref{MainThm}, $\cL_s$ is isomorphic to a special Lagrangian sphere $L_{\chX_s} \subset \chX$. According to \cite{PolishZaslow}, the mirror object to $\cE_C$ is always isomorphic to a special Lagrangian $L_{\chC} \subset \chC$. We conclude that $L_{\chX_s} \times L_{\chC} \subset \chZ_s$ is a special Lagrangian isomorphic to $\cL_{\chZ_s}$. This proves our claim.   
\end{proof}
\section{Compatibility with real Fourier-Mukai transform}\label{dHYMSec}

Fix $(X, s(\bB + \ii \omega), \Omega)$, with sublattice $H = \langle \sigma_0, E \rangle \subset \bL$, and its Hodge-theoretic mirror $(\chX_s, \chB_s + \ii \chom_s, \chOm_s)$. In particular $X$ and $\chX_s$ are both members of families approaching large complex structure limits. 

In the present Section we sketch a heuristic argument explaining how our construction should be compatible with an alternative, analytic approach to proving the implication \emph{stability $\Rightarrow$ sLag} precisely in the same context as for Theorem \ref{MainThm}. This is conjectural and we do not carry out the required analysis.

The proposed approach rests on glueing certain local models to SYZ special Lagrangian fibrations on $X, \chX$ constructed using the results of Gross-Wilson \cite{GrossWilson}. Applying the setup of  \cite{GrossWilson} requires some assumptions.
\begin{definition}\label{GrossWilsonAdmissDef} Following Definition \ref{GrossWilsonRotDef}, we denote by 
\begin{equation*}
X_K := (X_K, \omega_K, \Omega_K),\,\chX_K := (\chX_K, \chom_K, \chOm_K)
\end{equation*}
the corresponding hyperk\"ahler rotations with respect to the angle $\psi = 0$, so that we have
\begin{align*}
\Omega_{K} &= \Imm(\Omega)+ \ii s\omega,\, \omega_{K} = \Rea( \Omega),\,\chOm_{K} = \Imm(\chOm_s)+ \ii \chom_s,\, \chom_{K} = \Rea( \chOm_s). 
\end{align*}  
at the level of forms. We say that $X, \chX$ are \emph{admissible} if $X_K, \chX_K$ admit holomorphic elliptic fibrations 
$p_{X_K}\!: X_K \to \PP^1,\,p_{\chX_K}\!: \chX_K \to \PP^1$ with fibre class $E$ (under the given identifications $\bL \cong H_2(X_K, \Z)$, $\bL \cong H_2(\chX_K, \Z)$), which are generic (i.e. they have precisely 24 singular fibres of type $I_1$).
\end{definition}

Suppose that $X, \chX$ are admissible in the sense of Definition \ref{GrossWilsonAdmissDef}. By the results of Gross and Wilson \cite{GrossWilson} concerning the SYZ conjecture for K3 surfaces, for any open $U \subset S^2$ away from a $0$-dimensional analytic (discriminant) locus $\Delta \subset S^2$, there exist special Lagrangian torus fibrations
\begin{equation*}
f_U\!: X |_U \to U,\,\chf_U\!: \chX_U \to U  
\end{equation*}
(obtained by hyperk\"ahler rotation of $p_{X_K}$, $p_{\chX_K}$) which are arbitrarily close in $C^{\infty}$ to suitable semi-flat dual local models 
\begin{equation*}
f^{0}_U\!: (X |_U, \omega_U, \Omega_U) \to U,\,\chf^{0}_U\!: (\chX_U, \chom_U, \chOm_U) \to U  
\end{equation*}
as described by Leung-Yau-Zaslow in their classic work \cite{LeungYauZaslow}.

Following the notation of the previous sections, we consider the \emph{dHYM equation} on $X$,
\begin{equation}\label{dHYMequ}
\Imm e^{-\ii \hat{\theta}} (s\omega + \ii (\sigma_0 + s\hat{\bB}) + \ii \del\delbar \varphi)^2 = 0, 
\end{equation}
to be solved for $\varphi \in C^{\infty}(X, \R)$, given fixed smooth $(1,1)$-forms representing $\omega$, $\sigma_0$, and $\hat{\bB}$, where the phase $e^{-\ii \hat{\theta}}$ is determined cohomologically by integration.   

It was observed by Jacob and Yau \cite{JacobYau_special_Lag} that, by a simple computation, using the notation of our Lemma \ref{TwistedPosLem}, \eqref{dHYMequ} is equivalent to the complex Monge-Amp\`ere equation
\begin{equation*}
\hat{\eta}^2 = \left(\sin(\hat{\theta})\left(\sigma_0 + s\hat{\bB} + \ii\del\delbar\varphi\right) + \cos(\hat{\theta}) s\omega\right)^2 = \omega^2. 
\end{equation*}
It follows from the Calabi-Yau theorem that if twisted ampleness $\hat{\eta} > 0$ holds then there exists a unique smooth $\varphi$ solving \eqref{dHYMequ}, up to an additive constant. 

Suppose now that $U$ is obtained from $S^2$ by removing small discs around the points of the discriminant $\Delta$. We can perform a \emph{real Fourier-Mukai transform}, in the sense of Leung-Yau-Zaslow \cite{LeungYauZaslow}, of the fixed solution $\varphi$, thought of as (the logarithm of) a Hermitian metric on the fibres of the trivial bundle $\olo_X$, with respect to the dual semi-flat local models $f^{0}_U$, $\chf^{0}_U$, yielding a smooth section $\tilde{L}_U \subset \chX_U$ of $\chf_U$. By the properties of the real Fourier-Mukai transform, as the special Lagrangian fibrations $f_U$, $\chf_U$ approach $f^{0}_U$, $\chf^{0}_U$ smoothly, $\tilde{L}_U \subset \chX_U$ becomes arbitrarily close in $C^{\infty}$ to being calibrated by $\Rea(\Omega_{\chX})$ (restricted to $\chX_U$). Thus we expect that by a more careful but relatively standard analysis, using the quantitive implicit function theorem, $\tilde{L}_U$ can be perturbed to a genuine sLag section $L_U \subset \chX_U$ of $\chf_U$. This should be essentially a special case of the more sophisticated results of Datar, Jacob and Zhang \cite{DatarJacob_collapsedK3} (their analysis is performed for Yang-Mills connections of arbitrary rank rather than rank $1$ deformed Yang-Mills). It is natural to regard the special Lagrangian section $L_U \subset \chX_U$ as the mirror of the structure sheaf $\olo_U$.

The most difficult step thus is \emph{glueing local model sLags at each point of $\Delta$ to $L_U$ in order to obtain a special Lagrangian $L \subset \chX$ lying in the correct Fukaya class $\cL$}. We conclude with a discussion of the expected local models. 

For this purpose, we need to recall one of Gross-Wilson's central results in more detail.
\begin{thm}[\cite{GrossWilson}, Theorems 4.5 and 5.6] Let $j\!: J \to \PP^1$ be an elliptically fibred K3 surface with (holomorphic) section and 24 singular fibres over $\Delta$. Then there exist open sets $U^i_1 \subset U^i_2 \subset \PP^1$, $i = 1,\ldots, 24$, each diffeomorphic to a disc, with $U^i_j \cap \Delta = \{p_i\}$, a positive constant $\varepsilon_0$ such that, for all $0 < \eps < \eps_0$, for any elliptic K3 $p\!: (M, \omega_M, \Omega_M) \to \PP^1$ with Jacobian $j\!: J \to \PP^1$ with holomorphic $2$-form $\Omega$ satisfying $ (\Rea\omega_M)^2 = (\Rea \Omega_J)^2$ in cohomology, and for any K\"ahler class $[\omega_{M,\eps }]$ on $M$ with
\begin{equation*}
[\omega_{M,\epsilon}].M_b = \eps,\,[\omega_{M,\epsilon}]^2 = (\Rea\omega_M)^2 = (\Imm\omega_M)^2,
\end{equation*} 
there exists a K\"ahler metric $\omega_{M, \eps} \in [\omega_{M,\eps}]$ with the following properties:
\begin{enumerate}
\item[$(i)$] $\omega_{M, \eps}|_{p^{-1}\big(\PP^1\setminus \bigcup_i U^i_2 \big)} $ is a semi-flat metric (not necessarily standard).
\item[$(ii)$] $\omega_{M, \eps}|_{p^{-1}U^i_1} = T^*_{\sigma_i}\omega_{OV}$, where $\omega_{OV}$ is a metric of Ooguri-Vafa type and $T_{\sigma_i}$ denotes translation (along the fibres) by a (not necessarily holomorphic) section.
\item[$(iii)$] the metrics $\omega_{M, \eps}$ become exponentially close in $C^{k, \alpha}$ to the unique Ricci-flat K\"ahler metric representing $[\omega_{M, \eps}]$ as $\eps \to 0$. 
\end{enumerate}
\end{thm}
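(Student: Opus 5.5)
The plan is to build an explicit approximate Ricci-flat metric on $M$ by gluing standard local models, and then to show that Yau's solution in $[\omega_{M,\eps}]$ differs from it by an exponentially small amount.

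\textbf{Step 1: semi-flat metric away from $\Delta$.} Over $\PP^1 \setminus \Delta$, I would first write down the semi-flat Ricci-flat metric on the Jacobian $J$. This is determined by the fibrewise flat metric of area $\eps$ together with the holomorphic $2$-form $\Omega_J$: the base acquires a Hessian metric from the periods of $\Omega_J$ along the fibres, and the fibres carry flat metrics of area $\eps$.

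\textbf{Step 2: transfer to $M$.} Since $M$ has Jacobian $J$, it is locally isomorphic to $J$ over small discs in the base, via local $C^\infty$ sections. The transition maps are fibrewise translations, which preserve flat fibre metrics. Hence the semi-flat form descends to $M$; it need not be the standard one, because of the twisting cocycle. The translations by these sections are the $T_{\sigma_i}$ appearing in $(ii)$.

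\textbf{Step 3: Ooguri--Vafa models near $\Delta$.} Near each $p_i$ (an $I_1$ fibre), I would replace the semi-flat metric by an Ooguri--Vafa metric. This is the Gibbons--Hawking metric on a circle bundle over $D \times S^1$, with harmonic function
\begin{equation*}
V = \sum_{n} \Big( \frac{1}{\sqrt{r^2 + (t-n)^2}} - a_n \Big),
\end{equation*}
suitably rescaled by $\eps$. Its Fourier expansion in $t$ shows that on the annulus $U^i_2 \setminus U^i_1$ it differs from a semi-flat metric by terms of order $e^{-c/\eps}$ in every $C^k$ norm. I would write this difference as $i\partial\bar\partial f_i$ with $f_i$ exponentially small. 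After composing with $T_{\sigma_i}$ and adding a constant multiple of the fibre form, the two models can be matched on the annulus. I would then glue using $\chi\, i\partial\bar\partial f_i$, where $\chi$ is a cutoff function in the base. The result is a Kähler form $\omega_{\rm glue}$ satisfying $(i)$ and $(ii)$.

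\textbf{Step 4: cohomology class.} I would check that $[\omega_{\rm glue}]$ equals the prescribed class $[\omega_{M,\eps}]$. The fibre area is $\eps$ by construction. The remaining freedom, namely the choice of section and the addition of pullbacks of base $(1,1)$-forms, realises any class with the given fibre degree and square. The normalisation $(\Rea\omega_M)^2=(\Imm\omega_M)^2=[\omega_{M,\eps}]^2$ ensures that the volume of $\omega_{\rm glue}$ matches $\Omega_M\wedge\bar\Omega_M$ up to an exponentially small error.

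\textbf{Step 5: perturbation to Ricci-flat.} The Ricci potential $F = \log(\omega_{\rm glue}^2 / \tfrac12 \Omega_M\wedge\bar\Omega_M)$ is supported in the gluing annuli and is $O(e^{-c/\eps})$ in $C^k$. I would solve
\begin{equation*}
(\omega_{\rm glue} + i\partial\bar\partial\varphi)^2 = e^{-F}\omega_{\rm glue}^2
\end{equation*}
by Yau's theorem, and aim to show $\|\varphi\|_{C^{k,\alpha}} \le C\eps^{-N}\|F\|_{C^{k}}$, which is still exponentially small. This gives $(iii)$.

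\textbf{Main obstacle.} The hard step is the uniform estimate in Step 5, because the metrics collapse: the fibres shrink as $\eps\to 0$ and there is no uniform injectivity radius. I would first try to work on local universal covers of the torus fibres, and on the corresponding covers of the Ooguri--Vafa regions. There the geometry has bounded curvature and injectivity radius after rescaling, so Schauder estimates hold with uniform constants. I would then control the Poincaré and Sobolev constants of $(M,\omega_{\rm glue})$ by polynomial powers of $\eps^{-1}$, using the diameter bound and the lower Ricci bound, so that the $C^0$ estimate and the linearised operator lose only a factor $\eps^{-N}$. That loss is absorbed by the exponential smallness of $F$.
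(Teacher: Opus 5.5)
The paper gives no proof of this statement. It quotes the result from Gross--Wilson, so the comparison is with their argument. Your architecture is theirs: a semi-flat metric away from $\Delta$, Ooguri--Vafa pieces near the $I_1$ fibres, an exponentially small gluing error, then Yau's theorem with losses polynomial in $\eps^{-1}$ absorbed by $e^{-c/\eps}$. Step 5 is a sensible version of the analytic part.

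The gaps are in the constructive Steps 2--4. They sit exactly where the statement's qualifiers ``not necessarily standard'', ``of Ooguri--Vafa type'' and ``not necessarily holomorphic'' come from.

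\textbf{Step 3.} This is the most serious gap: the standard Ooguri--Vafa potential cannot in general be transplanted to $M$. Take the coordinate $y$ for which $\Omega_M=dy\wedge dx$ and the vanishing cycle has period $1$. The other period near $p_i$ is then, up to sign, $\frac{1}{2\pi\ii}\log y+g(y)$ with $g$ holomorphic. This $g$ is an invariant of the germ of $(M,\Omega_M)$ along the $I_1$ fibre. Neither translations (holomorphic or not) nor adding multiples of a fibre form change it. Your $V$ has $t$-average equal to a multiple of $\log|y|$ plus a constant, so it only produces constant $g$.

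Consequently, for general $M$ (already for general $J$) there is no $\Omega$-preserving holomorphic identification of the model with a neighbourhood of the fibre in $M$, and $\omega_{OV}$ does not give a Kähler form on $M$. The sets $U^i_j$ in the statement are fixed independently of $\eps$. On the annuli $U^i_2\setminus U^i_1$ the mismatch is of order $|g(y)-g(0)|$, not $\ii\partial\bar\partial f_i$ with $f_i=O(e^{-c/\eps})$. Shrinking the annuli would give only polynomial errors, which is incompatible with $(iii)$.

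You need models with matching periods. One way is to replace $V$ by $V+\Rea h(y)$ with $h$ holomorphic and the connection adjusted; this shifts the $t$-period by $h$. You then also need a holomorphic, $\Omega$-preserving identification of the model with a neighbourhood of the $I_1$ fibre in $M$. Only after that does your Fourier-mode estimate produce an exponentially small $f_i$.

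\textbf{Step 2.} Translations preserve the flat fibre metrics but not the semi-flat form. Write $x=a_1\tau_1(y)+a_2\tau_2(y)$. The semi-flat form involves the horizontal $(1,0)$-form $\theta=\tau_1\,da_1+\tau_2\,da_2$. Translation by $\sigma=c_1\tau_1+c_2\tau_2$ replaces $\theta$ by $\theta+\tau_1\,dc_1+\tau_2\,dc_2$, so only flat sections ($c_i$ constant) preserve it.
\begin{itemize}
\item Holomorphic local sections are not flat in general. With constant periods, $x\mapsto x+y$ turns $\frac{W}{\eps}|dy|^2+\frac{\eps}{W}|dx|^2$ into $\frac{W}{\eps}|dy|^2+\frac{\eps}{W}|dx+dy|^2$.
\item A non-holomorphic translation does not preserve the complex structure, so the pulled-back form is not even of type $(1,1)$ on $M$.
\end{itemize}
Hence the local semi-flat forms do not glue for free. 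What is needed is a global choice of translating section adapted to the twist of $M$. Under a $C^\infty$ fibrewise identification $M\cong J$, $\Omega_M$ becomes $\Omega_J+p^*\beta$ for a $2$-form $\beta$ on the base. Then $T_\sigma^*\omega_{SF}$ is Kähler for this structure exactly when $\bar\partial\sigma=\beta$. This is solvable off the discs $U^i_1$, but obstructed on all of $\PP^1$ by the twist class. This is the source of the non-standard metric in $(i)$ and of the non-holomorphic $\sigma_i$ in $(ii)$.

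\textbf{Step 4.} The translations are also the only genuine freedom here. Adding $p^*\alpha$ is not available: $(\omega+p^*\alpha)^2-\omega^2=2\,\omega\wedge p^*\alpha$ has relative size $O(\eps)$, far above the exponential accuracy Step 5 requires. In any case the fibre-class component of $[\omega_{M,\eps}]$ is already fixed by the square. Showing that the translations reach every Kähler class with $[\omega_{M,\eps}].M_b=\eps$ and the prescribed square is a substantive part of the construction, and you have only asserted it.
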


Semi-flat and Ooguri-Vafa type metrics are discussed in detail in \cite{GrossWilson}, Sections 2 and 3. The latter in particular is a family of non-complete hyperk\"ahler metrics defined on the total space of a holomorphic Lefschetz fibration $p_Y\!: Y \to U^i_1$ with a single $I_1$ fibre.  

Applying the above result to $M = X_{K}$ and $M = \chX_{K}$, we find that the expected local model that should be glued to $L_U$ at a singular point $p_i$ is the image, under global hyperk\"ahler rotation from $M$ to $X$ or $\chX$, of a suitable section $s_i$ of $p_Y\!: Y \to U^i_1$, satisfying
$\Omega_M|_{p^{-1}U^i_1} . \pd(T^*_{\sigma_i} s_i) = 0$.
In other words, the candidate local models are local holomorphic sections such that $T^*_{\sigma_i} s_i$ are local holomorphic sections of $p_Y\!: Y \to U^i_1$, endowed with the complex structure induced by $\Omega_M|_{p^{-1}U^i_1}$. Note that local holomorphic sections $u_i$ always exist, and then we can simply set $s_i = T^*_{-\sigma_i} u_i$. 

Setting $U := \PP^1\setminus \bigcup_i U^i_2$, the problem then becomes showing that suitable local holomorphic sections $u_i$ can be chosen, and that the glueing to $L_U$ can be performed, in such a way that the Fukaya equivalence class of the glued special Lagrangian is mirror to $\olo_X$.  

\bibliographystyle{abbrv}
 \bibliography{biblio_dHYM}
 
\noindent SISSA, via Bonomea 265, 34136 Trieste, Italy;\\
Institute for Geometry and Physics (IGAP), via Beirut 2, 34151 Trieste, Italy\\
jstoppa@sissa.it

\end{document}